
\documentclass[oneside,11pt]{amsart}
\setlength{\textwidth}{165mm} \setlength{\textheight}{200mm}
\setlength{\oddsidemargin}{0pt} \setlength{\evensidemargin}{0pt}

\usepackage{amssymb,amscd,amsmath,latexsym}
\usepackage[mathcal]{euscript}
\usepackage{url}

\newtheorem{thm}{Theorem}[subsection]
\newtheorem{cor}[thm]{Corollary}
\newtheorem{lem}[thm]{Lemma}
\newtheorem{prop}[thm]{Proposition}
\theoremstyle{definition}
\newtheorem{defin}[thm]{Definition}
\newtheorem{exm}[thm]{Example}

\theoremstyle{remark}
\newtheorem{rem}[thm]{Remark}

\newcommand{\Z}{{\mathbb Z}}
\newcommand{\F}{{\mathbb F}}
\newcommand{\Ext}{\operatorname{Ext}}
\newcommand{\Tor}{\operatorname{Tor}}
\newcommand{\End}{\operatorname{End}}
\newcommand{\Hom}{\operatorname{Hom}}

\newcommand{\G}{{\mathcal G}}

\newcommand{\di}{\operatorname{dim}}

\newcommand{\sd}{\Sigma_d}
\newcommand{\vt}{V^{\otimes d}}
\newcommand{\HH}{\operatorname{H}}
\newcommand{\Hu}{\HH^\bullet}
\newcommand{\Hs}{\Hu(\Sigma_d, }
\newcommand{\bu}{\bullet}

 \numberwithin{equation}{subsection}
 \numberwithin{table}{subsection}

\begin{document}

\title[Cohomology for Young Modules]
{\bf On the cohomology of Young modules for the symmetric group}

\author{\sc Frederick R. Cohen}

\address
{Department of Mathematics\\University of Rochester\\Hylan
Building\\Rochester, NY~14627, USA}
 \email{cohf@math.rochester.edu}
 \thanks{Research of the first author was supported in part by NSF
grant DMS-0340575 and DARPA grant 2006-06918-01}

\author{\sc David J. Hemmer}
\address
{Department of Mathematics\\ University at Buffalo, SUNY \\
244 Mathematics Building\\Buffalo, NY~14260, USA}
\thanks{Research of the second author was supported in part by NSF
grant  DMS-0556260} \email{dhemmer@math.buffalo.edu}

\author{\sc Daniel K. Nakano}
\address
{Department of Mathematics\\ University of Georgia \\
Athens\\ GA~30602, USA}
\thanks{Research of the third author was supported in part by NSF
grant  DMS-0654169} \email{nakano@math.uga.edu}

\date{April 2009}

\subjclass[2000]{Primary 20C30, 55S12, 55P47}

\begin{abstract}


The main result of this paper is an application of the topology of
the space $Q(X)$ to obtain results for the cohomology of the symmetric
group on $d$ letters, $\Sigma_d$, with `twisted' coefficients in various choices of
Young modules and to show that these computations reduce to certain
natural questions in representation theory. The authors extend
classical methods for analyzing the homology of certain spaces
$Q(X)$ with mod-$p$ coefficients to describe the homology
$\HH_{\bullet}(\Sigma_d, V^{\otimes d})$ as a module for the general
linear group $GL(V)$ over an algebraically closed field $k$ of
characteristic $p$. As a direct application, these results provide a
method of reducing the computation of
$\text{Ext}^{\bullet}_{\Sigma_{d}}(Y^{\lambda},Y^{\mu})$ (where
$Y^{\lambda}$, $Y^{\mu}$ are Young modules) to a representation
theoretic problem involving the determination of tensor products and
decomposition numbers. In particular, in characteristic two, for
many $d$, a complete determination of $\Hs Y^\lambda)$ can be found.
This is the first nontrivial class of symmetric group modules where
a complete description of the cohomology in all degrees can be
given.

For arbitrary $d$ the authors determine $\HH^i(\Sigma_d,Y^\lambda)$
for $i=0,1,2$. An interesting phenomenon is uncovered--namely a
stability result reminiscent of generic cohomology for algebraic
groups. For each $i$ the cohomology $\HH^i(\Sigma_{p^ad},
Y^{p^a\lambda})$ stabilizes as $a$ increases. The methods in this
paper are also powerful enough to determine, for any $p$ and
$\lambda$, precisely when $\HH^{\bullet}(\sd,Y^\lambda)=0$. Such
modules with vanishing cohomology are of great interest in
representation theory because their support varieties constitute the
representation theoretic nucleus.
\end{abstract}

\maketitle

\section{Introduction}
\subsection{} The representation theory of the symmetric group and its
connections with the general linear group have been studied for more
than 100 years. Yet there exist very few calculations of cohomology
groups $\Ext^i_{k\Sigma_d}(M,N)$ for natural modules  $M$ and $N$
for the symmetric group $\Sigma_d$ over an algebraically closed
field $k$ of characteristic $p$. The cohomology ring
$\HH^{\bullet}(\Sigma_d,k)\cong \Ext^{\bullet}_{k\sd}(k,k)$ was
originally computed (as a vector space) by Nakaoka \cite{Nakaoka}.
Feshbach provided a combinatorial formulation of the ring structure
over $\F_2$ \cite{feshbach}. However for odd primes the ring
structure is only fully understood for small $d$. Using the results
of Araki-Kudo and Dyer-Lashof one can compute the homology
$\HH_{\bullet}(\Sigma_d,V^{\otimes d})$ over a field with $p$
elements for $p$ prime \cite{ak,dl}. The aforementioned computations
all employ constructions and techniques from algebraic topology.

In the late 1990's, Doty, Erdmann and Nakano \cite{DEN} introduced a
first quadrant spectral sequence which related the cohomology for
the symmetric group to that of the general linear group (cf.
\eqref{eq: spectralsequenceTorversion}). Let $V \cong k^n$ be the
natural module for the general linear group $GL(V) \cong GL_n(k)$.
The construction utilizes the commuting actions of $GL(V)$ and
$\Sigma_d$ on $V^{\otimes d}$ in a functorial way. The relationship
between the two cohomology theories relies heavily on understanding
the structure of $\text{Ext}^{j}_{k\Sigma_{d}}(V^{\otimes d},N)$ as
a $GL(V)$-module. Vanishing ranges for these cohomology groups were
obtained in \cite{KNcomparingcoho}, for $N$ a dual Specht or Young
module, which enabled the proofs of stability results between the
cohomology of certain $GL(V)$-modules and $\Sigma_{d}$-modules.
Beside these examples, the only known computations are for small
$d$. For Specht modules $S^\lambda$, even
$\Ext^1_{\sd}(k,S^\lambda)$ is unknown. When $D_\lambda$ and $D_\mu$
are simple modules, $\Ext^1_{\sd}(D_\lambda,D_\mu)$ is known in some
very particular examples.

\subsection{} In this paper we aim to effectively combine both
techniques mentioned above to calculate extension groups and
cohomology for the symmetric group. This idea is quite natural
because the structure of these Ext groups arises from
interconnections of topology and algebra as described above. An
example of these connections is  provided in a beautiful recent
paper of Benson \cite{benson}, where he shows that the homology of
the loop space of the $p$-completion of $BG$, denoted $\Omega({BG
^\wedge_p})$, depends only on representation theoretic information.
One crude overlap with the results here is that the homology groups
$\HH_{\bullet}(\Sigma_d,  V^{\otimes d})$ are also given in terms of
the homology of  certain loop spaces.

A common theme that runs throughout this paper is that symmetric
group cohomology is reduced to $GL_{n}(k)$-representation theory,
i.e., a homological version of Schur-Weyl duality. Our first step
involves applying the aforementioned spectral sequence to calculate
cohomology of Young modules. In this case the spectral sequence
collapses, and one needs to only understand the simple constituents
of $\HH_{\bullet}(\Sigma_d,\vt)$ as a $GL_n(k)$-module. This
demonstrates that homological information can be reduced to
questions where purely representation theoretic information arises.
Our Theorem \ref{thm:shiftedfinalanswer for p=2 and p oddL} gives a
complete description of this module as a tensor product of twisted
symmetric and exterior powers of $V$.

The computation of $\HH_{\bullet}(\Sigma_d,\vt)$ as a
$GL_n(k)$-module leads to a description of Young module cohomology
$\Hs Y^\lambda)$ in terms of the simple constituents of these
modules, which can often be completely calculated. One feature of
the result is that computing symmetric group cohomology in
arbitrarily high degree with coefficients in a Young module is
reduced to determining the composition factors of certain natural
modules for the general linear group.

For example, this method can be used easily and directly to
determine $\HH^i(\Sigma_{16}, Y^\lambda)$ in characteristic two for
any $i \geq 0$ and any $\lambda \vdash 16$. A useful fact that
we prove is when the decomposition matrices are known, a complete
description of $\Ext^i_{\Sigma_d}(Y^\lambda, Y^\mu)$ can be given.

With our description of $\HH_{\bullet}(\Sigma_d,\vt)$ as a
$GL_n(k)$-module in terms of twisted symmetric and exterior powers,
we invoke the work of Doty on the composition factors of symmetric
powers to determine precisely, in any characteristic, which
$\lambda$ have the property that $\Hs Y^\lambda)$ is identically
zero. Such modules in the principal block with no cohomology can be
used to describe the {\it representation theoretic nucleus}. We can
also compute $\HH^i(\Sigma_d, Y^\lambda)$ for small degrees $i$ and
arbitrary $d$ and $\lambda$.

\subsection{}It has always been a mystery how the Frobenius morphism on the group scheme $GL_{n}$
plays a role in the cohomology theory of the symmetric group.
Certainly, our description of $\HH_{\bullet}(\Sigma_d,\vt)$ as a
$GL_n(k)$-module is a good start to understanding this phenomenon.
We prove a stability result relating cohomology of Young modules
$Y^{p^a\lambda}$ and $Y^{p^{a+1}\lambda}$. This appears to be the
first result for the representation theory of the symmetric group
which involves multiplying a partition by $p$. This is further
evidence that the Frobenius will play an integral role in our
understanding of symmetric group representations, even though there
is no obvious interpretation of twisting representations on the
symmetric group side (as opposed to twisting
$GL_{n}$-representations).

\begin{rem}The main result, Theorem \ref{thm:shiftedfinalanswer for p=2 and p oddL}, interpreting Young module
cohomology in terms of composition factors of certain
$GL_n(k)$-modules, holds in any characteristic. However some of the
subsequent results and computations are only for characteristic two
while others are general, so we will be careful to state in which
characteristic the various results apply.
\end{rem}

\subsection{Organization}

The paper is organized in the following manner. Section
\ref{section: The Schur and inverse Schur functor} is an exposition
of the relationship between $\HH_{\bullet}(\Sigma_d, \vt)$ as a
$GL_n(k)$-module and the cohomology of Young modules based on
results from \cite{DEN}. In Sections \ref{sec: On the homology of
$Q(X)$}-\ref{sec:degree.shift} a natural construction from
algebraic topology is presented which gives a complete description of
$\HH_i(\Sigma_d, \vt)$ as a $GL_n(k)$-module.

In Section \ref{sec: cohomology between Young modules} we provide
a list of sufficient conditions on $\HH^\bu(\Sigma_d, Y^\lambda)$ to determine
$\Ext^\bu_{\Sigma_d}(Y^\lambda, Y^\mu)$. Section \ref{sec: completeexampled=6}
gives an application of this machinery to provide complete answers for $\Sigma_6$ in
characteristic two and explains how one can easily replicate this
for all $\Sigma_d$, $d \leq 16$.

The subject of Section \ref{sec: Cohomology vanishing theorems} is a
precise determination of which Young modules have vanishing
cohomology for arbitrary $\Sigma_d$ and arbitrary characteristic.
In Section \ref{sec: low degree cohomology} a
complete calculation of Young module cohomology is given in
degrees $0,1, 2$ for arbitrary $\Sigma_d$ in characteristic two.

The stability theorem discussed above is proven in Section \ref{sec:
Generica cohomology for Young modules}. The final section is a description of
similar results for cohomology of permutation modules $M^\lambda$. In this case the answer is given
not in terms of composition factor multiplicities of an explicit
$GL_d(k)$-module, but instead in terms of weight-space
multiplicities of the same module.

The authors would like to express their appreciation to the referee
whose comments were very helpful in improving an earlier version of
this manuscript.

\section{Homology and Commuting Actions}
\label{section: The Schur and inverse Schur functor}

\subsection{}

In this section we will explain why cohomology of Young modules is
related to the homology groups $\HH_{\bullet}(\Sigma_d, \vt)$. We
will assume the reader is familiar with the definition of Young
modules for the symmetric group, in particular determining which are
nonprojective and which lie in the principal block (cf.
\cite[4.6]{Martinbook} for details).

Let $V$ be the vector space of $n$-dimensional column vectors over
the field $k$.
\begin{rem}\label{rem:choices.of.fields}
Some of the results below apply to any field $k$ while
$\overline{\F}_p$ is required in most of the work below. In
addition, there are points where it is useful to use $\F_p$. Thus it
will be stated explicitly when the field $k$ will be restricted to
either $ \overline{\F}_p$  or $\F_p$.
\end{rem}

The general linear group $GL_n(k):=GL(V)$ acts naturally on $V$, and
thus on $\vt$. This action commutes with the action of the symmetric
group $\sd$, acting by place permutation. Thus there is a map from
$kGL_n(k)$ into the {\it Schur algebra}
$$S(n,d)= \End_{k\sd}(\vt).$$ The famous double centralizer theorem \cite[2.6c]{GreenPolynoGLn} states, in part, that this map is a surjection.

We briefly describe the setup from \cite{DEN}. When $n \geq d$ there
is an idempotent $e \in S(n,d)$ such that $eS(n,d)e \cong k\sd$.
This gives an exact covariant functor ${\mathcal F}$ going from
$\text{Mod}(S(n,d))$ to $\text{Mod}(k\sd)$ defined by
${\mathcal F}(M) = eM$ and called
the Schur functor. The Schur functor can be realized as both a
$\Hom$ and a tensor product functor:
$${\mathcal F}(M)=eM \cong \Hom_{S(n,d)}(S(n,d)e,M) \cong eS(n,d) \otimes_{S(n,d)} M.$$
Thus it admits two natural (one-sided) adjoint functors from mod-$k\sd$ to mod-$S(n,d)$:
    $$\G_{\Hom}(N) = \Hom_{\sd}(eS(n,d),N), \,\,\,\,\,\G_\otimes(N)=S(n,d)e \otimes_{eS(n,d)e} N.$$
The functor $\G_{\Hom}$ is left exact, and so admits higher right
derived functors $R^\bullet \G_{\Hom}$. The functor $\G_\otimes$ is
right exact and so admits higher left derived functors $L_\bullet
\G_\otimes.$ These derived functors may  be expressed as:
$$R^j \G_{\Hom}(N) = \Ext^j_{k\sd}(\vt,N), \,\,\,\,\, L_j\G_\otimes(N)=        \Tor_j^{k\sd}(\vt,N).$$

    In \cite{DEN} two first-quadrant Grothendieck spectral
sequences are constructed that relate cohomology for $GL_n(k)$ to
that of $k\sd$. For example, the pair of functors ${\mathcal F}$ and
$\G_\otimes$ gives rise to a Grothendieck spectral sequence with
$E_2$ page:

\begin{equation}\label{eq: spectralsequenceTorversion}
E_2^{i,j}=\Ext^i_{S(n,d)}(\Tor_j^{\sd}(\vt, N),M) \Rightarrow
\Ext^{i+j}_{\sd}(N,eM).
\end{equation}

\subsection{} In Section~\ref{sec: On the homology of $Q(X)$}, we will
focus on the computation of
$$\HH_\bu(\Sigma_d,  V^{\otimes d})=\Tor_{\bullet}^{k\sd}(k,\vt).$$
Historically, topologists have viewed this homology computation as
more natural than the calculation of the cohomology. This might be
due to the fact that one can express the homology in terms of
symmetric and exterior powers. On the other hand, algebraists have
often viewed cohomology as more natural for applications in
representation theory. This section is devoted to keeping track of
the various $GL_{n}(k)$-actions on the homology and cohomology
groups.

Normally, one would consider $\vt$ as a
$\Sigma_{d}$-$GL_{n}(k)$-bimodule and view the tensor space as a
right $GL_{n}(k)$-module. However, the action of the symmetric group
and the general linear group commute and we can view $\vt$ as a left
$\Sigma_{d}\times GL_{n}(k)$-module (the $\Sigma_{d}$-action is by
place permutation and the $GL_{n}(k)$-action is by the diagonal action).
This will induce a left $GL_{n}(k)$-action on $\HH_\bu(\Sigma_d,  V^{\otimes d})$.

Next observe that, by twisting by the map on $k\Sigma_d$ given by
$\sigma \rightarrow \sigma^{-1}$, one can make any left
$k\Sigma_{d}$-module into a right $k\Sigma_{d}$-module and vice
versa. Using this correspondence, one can see that
$\Tor_{\bullet}^{k\sd}(M,N)\cong \Tor_{\bullet}^{k\sd}(N,M)$. In
particular,
$$L_{\bu}\G_\otimes(k)=\Tor_{\bu}^{k\sd}(\vt,k)\cong \Tor_{\bu}^{k\sd}(k,\vt)\cong \HH_\bu(\Sigma_d,  V^{\otimes d}).$$
Moreover, by tracing through the definitions one sees that these are
isomorphisms of left $GL_{n}(k)$-modules.

For an $S(n,d)$-module $M$, let $M^\tau$ denote the contravariant
(or transpose)  dual of $M$, as described in
\cite[2.7]{GreenPolynoGLn}. The underlying vector space of
$M^{\tau}$ is $\text{Hom}_{k}(M,k)$ and the action of $GL_n(k)$ is
\cite[2.7a]{GreenPolynoGLn} $(g.f)(m)=f(g^{\operatorname{tr}}.m)$
for $f\in M^{\tau}$, $m\in M$, and $g\in GL_{n}(k)$. Let $*$ be the
contragredient dual on $k\Sigma_{d}$ modules. By applying the change
of rings formula, one has
\begin{eqnarray*}
[V^{\otimes d}\otimes_{k\Sigma_{d}}(-)^{*}]^{\tau}&\cong& \text{Hom}_{k}(V^{\otimes d}\otimes_{k\Sigma_{d}} (-)^{*},k)\\
&\cong& \text{Hom}_{k\Sigma_{d}}(V^{\otimes d},\text{Hom}_{k}((-)^{*},k))\\
&\cong& \text{Hom}_{k\Sigma_{d}}(V^{\otimes d},(-)).
\end{eqnarray*}
In the last line we are using the fact that $\text{Hom}_{k}((-)^{*},k))\cong ((-)^{*})^{*}\cong (-)$.
This induces a natural isomorphism of functors:
$R^{\bullet}\G_{\Hom}(-)\cong
L_{\bullet}\G_\otimes((-)^{*})^{\tau}$. In particular as left
$GL_{n}(k)$-modules
\begin{equation}
\label{eq:relateRjLj} R^{j}\G_{\Hom}(k)\cong
L_j\G_\otimes((k)^{*})^{\tau}\cong \HH_{j}(\Sigma_{d},V^{\otimes
d})^{\tau}
\end{equation}
for all $j\geq 0$. Since the $\tau$-duality fixes all simple
$S(n,d)$-modules, the $GL_{n}(k)$-composition factors of
$R^{j}\G_{\Hom}(k)$ are the same as in
$\HH_{j}(\Sigma_{d},V^{\otimes d})$.

\subsection{} The simple $S(n,d)$-modules for $n\geq d$ are parameterized by partitions of $d$. When $\lambda$ is
a partition of $d$ (denoted by $\lambda \vdash d$), let $L(\lambda)$
be the corresponding simple $S(n,d)$-module and $I(\lambda)$ be its
injective hull in the category of $S(n,d)$-modules. Note that the
simple $S(n,d)$-modules are exactly the simple polynomial
$GL_{n}(k)$-modules of degree $d$. For a description of the simple
$GL_n(k)$-modules and the Steinberg Tensor Product Theorem (which
will be used repeatedly), the reader is referred to \cite{Jantzen}.
If $M$ is a finite-dimensional $GL_{n}(k)$-module then let $[M:S]$
denote the multiplicity of the simple module $S$ in a composition
series of $M$. Under the Schur functor, $I(\lambda)$ maps to the
Young module $Y^\lambda$. If one sets $M:=I(\lambda)$ then the
spectral sequence \eqref{eq: spectralsequenceTorversion} collapses
to get part (a) of the following theorem. Part (b) is obtained by
further specializing $N:=k$ and using the fact that both $\vt$ and
$Y^\lambda$ are self-dual $k\sd$-modules while $L(\lambda) \cong
L(\lambda)^\tau$, together with \eqref{eq:relateRjLj}.

\begin{thm}\cite[Prop 2.6B]{DEN} \label{thm:multl of simples in Ljgivescohomology}
\begin{itemize}
\item[(a)] $\dim_k\Ext^i_{\sd}(N,Y^\lambda)=[L_i\G_\otimes(N):L(\lambda)], \,\, i \geq 0.$
\item[(b)] $\dim_k \HH^i(\Sigma_d,Y^\lambda) = [\HH_i(\Sigma_d,\vt):L(\lambda)], \,\, i \geq 0.$
\end{itemize}
\end{thm}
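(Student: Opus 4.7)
The plan is to derive both parts from the spectral sequence \eqref{eq: spectralsequenceTorversion} by appropriately specializing $M$ (and then $N$). First I would set $M := I(\lambda)$. Since $I(\lambda)$ is injective in $\text{Mod}(S(n,d))$, the functor $\Ext^i_{S(n,d)}(-,I(\lambda))$ vanishes for $i>0$, so the spectral sequence collapses onto its $i=0$ row, yielding
\[
\Hom_{S(n,d)}\!\bigl(L_j\G_\otimes(N),\,I(\lambda)\bigr) \;\cong\; \Ext^{j}_{\sd}\!\bigl(N,\,eI(\lambda)\bigr).
\]
Because the Schur functor sends $I(\lambda)$ to the Young module $Y^\lambda$, the right-hand side is $\Ext^j_{\sd}(N,Y^\lambda)$.

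To conclude part (a) I would invoke the standard fact that if $J$ is the injective hull of a simple module $L$ in a finite-dimensional algebra, then for any finite-dimensional module $X$ one has $\dim_k \Hom(X,J) = [X:L]$. This follows from exactness of $\Hom(-,J)$ (hence additivity on composition series) combined with $\dim \Hom(L',J) = \delta_{L',L}$: any nonzero map from a simple $L'$ embeds it into the simple socle $L$ of $J$. Applying this with $J = I(\lambda)$, $L = L(\lambda)$, and $X = L_j\G_\otimes(N)$ produces part (a).

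For part (b), I would specialize $N := k$ in part (a) and invoke the identifications established in Section~2.2: as left $GL_n(k)$-modules,
\[
L_j\G_\otimes(k) \;=\; \Tor_j^{k\sd}(\vt,k) \;\cong\; \Tor_j^{k\sd}(k,\vt) \;\cong\; \HH_j(\Sigma_d,\vt),
\]
where the middle isomorphism uses the twist $\sigma\mapsto \sigma^{-1}$ to swap the arguments of $\Tor$. Since $\Ext^i_{\sd}(k,Y^\lambda) = \HH^i(\Sigma_d,Y^\lambda)$, substitution into part (a) yields the claimed formula. If one prefers to phrase the right-hand side through $R^i\G_{\Hom}(k)$ instead, the equivalence $R^{j}\G_{\Hom}(k)\cong \HH_{j}(\Sigma_{d},\vt)^{\tau}$ of \eqref{eq:relateRjLj}, together with $L(\lambda)\cong L(\lambda)^\tau$, shows that $\tau$-duality preserves the relevant composition factor multiplicities, so the two viewpoints agree.

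The only substantive checks are the injectivity of $I(\lambda)$ in $\text{Mod}(S(n,d))$ (which legitimizes the collapse of the spectral sequence) and the identification $\mathcal{F}(I(\lambda)) = Y^\lambda$. Both are standard in the theory of the Schur algebra; once assumed, the remainder of the argument is purely formal bookkeeping with composition factor multiplicities.
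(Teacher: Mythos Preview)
Your proposal is correct and follows essentially the same approach as the paper: set $M = I(\lambda)$ to collapse the spectral sequence for part (a), then specialize $N = k$ for part (b). You supply more detail than the paper does (the explanation of why $\dim\Hom(X,I(\lambda)) = [X:L(\lambda)]$ is left implicit in the paper), and your route to (b) via the direct identification $L_j\G_\otimes(k)\cong \HH_j(\Sigma_d,\vt)$ from Section~2.2 is slightly more streamlined than the paper's phrasing through self-duality and \eqref{eq:relateRjLj}, but the two are equivalent.
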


The theorem above indicates that determining the simple constituents
of $\HH_{\bullet}(\sd, \vt)$ as a graded $GL_n(k)$ module allows one
to calculate Young module cohomology in all degrees. At the time of
\cite{DEN} there seemed to be no way to understand even the
dimension of $\HH_i(\sd,\vt)$, let alone its $GL_n(k)$-module
structure, even in the case $i=1$. In the following sections some
results from algebraic topology are used to give a complete,
explicit description of $\HH_{\bullet}(\sd, \vt)$ as a
$GL_n(k)$-module. In particular, Theorem \ref{thm:shiftedfinalanswer
for p=2 and p oddL} and Corollary \ref{corgivingstructure} imply that the module is just a direct sum of
tensor products of Frobenius twists of symmetric and (for $p$ odd)
exterior powers of the natural module $V$.

\section{On the homology of $Q(X)$ and $\HH_\bu(\Sigma_d,  V^{\otimes d})$}\label{sec: On the homology of $Q(X)$}
\subsection{}

Fix a field $k$ together with a graded vector space $V$ over $k$
where $V$ is concentrated in degrees strictly greater than $0$ (i.e.,
$V$ is $\mathbb N$-graded). Such vector spaces are called {\it
connected}. This choice of grading is a formal convenience
concerning $V$ which reflects the natural topological setting used
below. Properties of the homology groups $\HH_\bullet(\Sigma_d,
V^{\otimes d})$ are addressed below.

The purpose of this section is to describe the functor given by the
direct sum $ \oplus_{d \geq 0}\HH_\bullet(\Sigma_d, V^{\otimes d})$
in terms of the known singular homology of a topological space with
coefficients in a field $k$. Some of the results described below
hold for any field $k$ while some depend on the choice of field.

Most of the results used below hold in case $k$ is either $\F_p$,
the field with $p$ elements for $p$ prime or the algebraic closure
of $\F_p$, denoted $\overline{\F}_p$. In these cases below, the
specific choice of field, either $\F_p$ or $\overline{\F}_p$, will
be indicated.

The results presented next are classical. Our intent is to provide a
clear self-contained exposition while keeping track of additional
new data, in particular the $GL(V)$-module structure on the homology
with coefficients in $k = \overline{\F}_p$. With this information,
we show that a combination of known algebraic and topological
results admits new applications within representation theory. The
authors intend, in a future paper, to approach related questions for
certain choices of Hecke algebras.

\subsection{} Assume that $V$  is
given in degree $s$ by a vector space $V_s$ having dimension $b_s <
\infty$. Thus $V$ is a graded vector space which may be non-trivial
in arbitrarily many degrees, but is required to be of finite
dimension in any fixed degree. To coincide with standard topological
constructions, say that such a $V$ has finite type. It is not
necessary to make this assumption in what follows below concerning
homology groups. However, this assumption is useful in passage to
cohomology groups.

In addition as remarked above, $V$ is assumed to be connected.
Notice that by natural degree shift arguments (cf. Section
\ref{sec:degree.shift}.),  it suffices to consider $\oplus_{d \geq
0}\HH_\bullet(\Sigma_d, V^{\otimes d})$ for connected vector spaces
$V$  (assuming that a given vector space is totally finite
dimensional as a graded vector space).

Let  $\hbox{sgn}$ denote the one-dimensional sign representation of
the symmetric group. One feature which arises by carrying out the
work below in the context of graded vector spaces, rather than
ungraded vector spaces, is that the methods also apply directly to
the case of the $\Sigma_d$-module $\vt \otimes \hbox{sgn}$. Thus
these techniques should apply to a calculation of cohomology of
twisted Young modules $Y^\lambda \otimes \hbox{sgn}$, and even to
the so-called {\it signed} Young modules, which were defined by
Donkin in \cite{DonkinSymmetric}.

If $V$ is a connected vector space of finite type over any field
$k$, then $ \oplus_{d \geq 0}\HH_\bullet(\Sigma_d, V^{\otimes d})$
is isomorphic to the singular homology of a certain topological
space studied in other contexts first arising in work of Nakaoka,
Steenrod, Araki-Kudo, Dyer-Lashof and others \cite{ak, dl,Nakaoka,
s}. That is the topological space $Q(X)$ described in the next
section.

\subsection{}

Recall that the reduced homology groups of the $n$-sphere $S^n$ for
$n > 0$  $$\bar H_i(S^n,k)$$  are all $0$ for $i \neq n$ and is the
field $k$ in case $i = n$. In addition, a wedge of spheres $$\vee_{n
\in W} S^n$$ for an index set $W$ has the property that there is an
isomorphism
$$\bar H_i(\vee_{n \in W} S^n,k) \to \oplus_{n \in W}\bar{H}_i(S^n,k).$$

Thus given a graded, connected vector space $V$, there is a choice
of a wedge of spheres $$X = \vee_{n \in W} S^n$$ such that there is
an isomorphism $\overline{\HH}_\bu(X,k) \to V$, where
$\overline{\HH}_\bu(X,k)$ denotes the reduced homology of $X$ with
coefficients in $k$. In particular if $V$ has basis $\{v_1(q),
\cdots, v_{b_q}(q)\}$ in degree $ q > 0$, then $X$ may be chosen to
be $$X=\bigvee_{1 \leq q < \infty} (\bigvee_{b_q} S^q).$$

The space $Q(X)$ is defined as $$Q(X)=\bigcup_{ 0<m < \infty}
\Omega^m\Sigma^m(X)$$ where $\Omega^m\Sigma^m(X)$ denotes the space
of continuous, pointed functions from the $m$-sphere to the $m$-fold
suspension of the space $X$. The space $Q(X)$ has been the subject
of thorough investigation and admits many applications \cite{ak,clm,
dl}.

Next, restrict attention to the fields $\overline{\F}_p$ and $\F_p$.
Notice that the universal coefficient theorem gives a natural
isomorphism $$\rho: \HH_\bullet(Y, \F_p)\otimes_{\F_p}
\overline{\F}_p \to \HH_\bullet(Y,\overline{\F}_p)$$ for
path-connected spaces of the homotopy type of a CW complex of finite
type. Thus the homology of $Q(X)$ satisfies the property that there
are isomorphisms $$\rho: \HH_\bullet(Q(X), \F_p)\otimes_{\F_p}
\overline{\F}_p \to \HH_\bullet(Q(X),\overline{\F}_p).$$

\subsection{} A classical result due to Araki-Kudo and Dyer-Lashof
is described first  where homology is taken with coefficients in
$\F_p$ \cite{ak,clm, dl}. Their results are then developed below
both in Definition \ref{defin:operations} and Theorem
\ref{thm:araki.kudo.dyer.lashof.operations.over.overline.F} to give
the results required here with coefficients in $\overline{\F}_p$.

\begin{thm}\label{thm:araki.kudo.dyer.lashof}
Let $X$ denote a path-connected CW-complex with $V =
\overline{\HH}_\bullet(X, \F_p).$ There is a natural isomorphism
$$H: \HH_\bullet(Q(X),\F_p) \longrightarrow  \oplus_{d \geq
0}\HH_\bullet(\Sigma_d, V^{\otimes d}).$$ Furthermore,
$\HH_\bullet(Q(X),\F_p)$ is a known, explicit functor of $
\HH_\bullet(X,\F_p)$ described below.
\end{thm}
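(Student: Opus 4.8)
The plan is to recall the classical "approximation theorem" for configuration spaces and iterated loop spaces, and then read off the homology computation from the known structure of the free $E_\infty$-algebra (equivalently, the free infinite loop space) on a space. Concretely, the starting point is the May--Milgram--Segal identification of $Q(X)$ (for $X$ connected) with the group completion of the free $E_\infty$-space $\coprod_{d\ge 0} E\Sigma_d \times_{\Sigma_d} X^d$; since $X$ is connected this monoid is already group-like in the relevant range, so there is a homology isomorphism
\[
\HH_\bullet(Q(X),\F_p)\;\cong\;\bigoplus_{d\ge 0}\HH_\bullet(E\Sigma_d\times_{\Sigma_d}X^d,\F_p).
\]
The next step is to identify the right-hand side with $\bigoplus_{d\ge 0}\HH_\bullet(\Sigma_d, V^{\otimes d})$. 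This is exactly the statement that $\HH_\bullet(E\Sigma_d\times_{\Sigma_d}X^d,\F_p)$ is the $\Sigma_d$-equivariant homology of $X^d$, which by the Cartan--Leray (Borel) spectral sequence — or directly, since $\HH_\bullet(X^d,\F_p)\cong V^{\otimes d}$ is a free $\F_p\Sigma_d$-complex's homology computed by a chain complex of permutation modules — collapses to give $\HH_\bullet(\Sigma_d; \HH_\bullet(X^d,\F_p)) = \HH_\bullet(\Sigma_d, V^{\otimes d})$. Here one uses connectedness of $X$ so that $X^d$ has a $\Sigma_d$-CW structure whose cellular chains are a complex of $\F_p\Sigma_d$-modules whose homology is $V^{\otimes d}$, and the Künneth theorem over a field to get $\HH_\bullet(X^d,\F_p)\cong V^{\otimes d}$ as $\Sigma_d$-modules. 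Naturality of all of these constructions in $X$ gives the naturality of the isomorphism $H$.

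For the "furthermore" clause, the plan is to invoke the Araki--Kudo/Dyer--Lashof/Cohen--Lada--May description of $\HH_\bullet(Q(X),\F_p)$ as a functor of $\HH_\bullet(X,\F_p)$: namely, $\HH_\bullet(Q_0(X),\F_p)$ (the basepoint component) is the free graded-commutative algebra — with the appropriate signs and restrictions at $p=2$ versus $p$ odd — on the free allowable $R$-module (or $\mathcal{R}$-module, the module over the Dyer--Lashof algebra) generated by $\overline{\HH}_\bullet(X,\F_p) = V$, modulo the instability/excess relations and the relation $Q^i x = x^p$ in the bottom degree. Translating this across the splitting $Q(X)\simeq \mathbb Z\times (\text{stuff})$ and restricting to the degree-$d$ piece of the induced grading (the piece coming from $X^d$) gives the explicit functor of $V$ that computes $\bigoplus_d \HH_\bullet(\Sigma_d, V^{\otimes d})$. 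I would state this in the form "$\HH_\bullet(Q(X),\F_p)$ is the free object on $V$ in the category of graded-commutative algebras equipped with Dyer--Lashof operations satisfying the Adem and Nishida relations", and then say the detailed combinatorial form (which monomials in $Q^I$ appear, with which admissibility and excess conditions) is recorded in the cited references and recalled below in Definition \ref{defin:operations} and Theorem \ref{thm:araki.kudo.dyer.lashof.operations.over.overline.F}; the present theorem only asserts existence of such a functorial description together with the isomorphism $H$.

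The main obstacle — and the point that deserves the most care — is bookkeeping rather than any deep new idea: one must make sure the \emph{grading} on $\HH_\bullet(Q(X),\F_p)$ that corresponds to the symmetric-group degree $d$ is the correct one, i.e. that the summand $\HH_\bullet(\Sigma_d, V^{\otimes d})$ corresponds to the subspace spanned by Dyer--Lashof monomials of total "weight" $d$ (the number of times $X$ is smashed in), and that this weight grading is compatible with the algebra and operation structure in the way claimed. A secondary subtlety is the group-completion step: $Q(X)$ is the group completion of the free $E_\infty$-monoid, so one should be careful to state the isomorphism on the level of the full homology (summed over components) rather than a single component, and to note that for connected $X$ no actual completion is lost in the additive statement. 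Finally, passing from $\F_p$ to $\overline{\F}_p$ is handled by the universal coefficient isomorphism $\rho$ already recorded in the excerpt, so it plays no role here; but keeping track of the $GL(V)$-action — which is not part of this theorem's statement but is the whole point of the paper — motivates doing everything functorially in $V$, which is why I would phrase every step above as a natural transformation of functors on the category of connected graded vector spaces (equivalently, on wedges of spheres $X$).
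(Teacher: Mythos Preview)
Your overall plan is sound and matches the classical literature, but the paper itself does not prove this theorem: it is stated as a known result of Araki--Kudo and Dyer--Lashof with citations to \cite{ak,clm,dl}, and the mechanism behind the isomorphism is only spelled out later in Section~\ref{sec:Topological analogues}. There the paper works with the extended power construction $D_d(X) = (E\Sigma_d \times_{\Sigma_d} X^{(d)})/(E\Sigma_d \times_{\Sigma_d}\{*\})$ built from the \emph{smash} power $X^{(d)}$, records that $\overline{\HH}_\bullet(D_d(X),k)\cong \HH_\bullet(\Sigma_d,V^{\otimes d})$, and then invokes the Kahn--Snaith stable splitting $Q(X)\simeq_s \bigvee_{d\ge 0} D_d(X)$ (Theorem~\ref{thm:astable.splitting}) to obtain the additive decomposition. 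Your route via the May approximation theorem and group completion is a legitimate alternative to the stable-splitting argument and arrives at the same place.

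There is, however, a genuine slip in your displayed isomorphism. The monoid $\coprod_{d\ge 0} E\Sigma_d\times_{\Sigma_d} X^d$ built from the \emph{Cartesian} product is not group-like (its $\pi_0$ is $\mathbb N$), and its group completion is $Q(X_+)$, not $Q(X)$. Correspondingly,
\[
\bigoplus_{d\ge 0}\HH_\bullet\bigl(E\Sigma_d\times_{\Sigma_d}X^d,\F_p\bigr)
\;\cong\;
\Bigl(\bigoplus_j \HH_\bullet(\Sigma_j,V^{\otimes j})\Bigr)\otimes\Bigl(\bigoplus_m \HH_\bullet(\Sigma_m,\F_p)\Bigr),
\]
which is $\HH_\bullet(Q(X_+),\F_p)$, not $\HH_\bullet(Q(X),\F_p)$. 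The fix is exactly what the paper does: replace $X^d$ by the smash power $X^{(d)}$ and work with the filtration quotients $D_d(X)$ of May's model $C_\infty(X)\simeq Q(X)$; then $\overline{\HH}_\bullet(X^{(d)},\F_p)\cong V^{\otimes d}$ on the nose and your Cartan--Leray / Borel argument goes through to give $\overline{\HH}_\bullet(D_d(X),\F_p)\cong \HH_\bullet(\Sigma_d,V^{\otimes d})$. With that correction your sketch is fine, and your remarks about the weight grading and the free Dyer--Lashof algebra description are precisely what the paper records in Sections~\ref{sec:The.prime.two} and~\ref{sec:Odd.primes}.
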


Therefore, the universal coefficient theorem together with Theorem
\ref{thm:araki.kudo.dyer.lashof} has the following consequence,
where some additional structure is required to give the
$GL(V)$-action.

\begin{thm}\label{thm:homology.splitting}
Let $X$ denote a path-connected CW-complex with $V =
\overline{\HH}_\bullet(X, \overline{\F}_p).$ There is a natural
isomorphism
$$\overline{H}: \HH_\bu(Q(X),\overline{\F}_p)
\longrightarrow \oplus_{d \geq 0}\HH_\bu(\Sigma_d, V^{\otimes d}).$$
Furthermore, the natural action of $GL(V)$ on $\oplus_{d \geq
0}\HH_\bu(\Sigma_d, V^{\otimes d})$ is described below.
\end{thm}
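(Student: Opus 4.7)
The plan is to derive the statement by tensoring the classical $\F_p$-coefficient isomorphism of Theorem \ref{thm:araki.kudo.dyer.lashof} with $\overline{\F}_p$, and then to upgrade naturality in $V$ to capture the $GL(V)$-action. Concretely, I would begin with the assumption that $V = \overline{\HH}_\bu(X,\overline{\F}_p)$, and write $V_0 := \overline{\HH}_\bu(X,\F_p)$, so that the universal coefficient isomorphism $\rho$ identifies $V_0 \otimes_{\F_p}\overline{\F}_p$ with $V$. Theorem \ref{thm:araki.kudo.dyer.lashof} supplies a natural isomorphism
\[
H \colon \HH_\bu(Q(X),\F_p) \longrightarrow \bigoplus_{d\geq 0}\HH_\bu(\Sigma_d, V_0^{\otimes d}).
\]

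Next I would tensor both sides with $\overline{\F}_p$ over $\F_p$. On the left the coefficient-change isomorphism $\rho$ recalled in the preceding subsection identifies
\[
\HH_\bu(Q(X),\F_p)\otimes_{\F_p}\overline{\F}_p \;\cong\; \HH_\bu(Q(X),\overline{\F}_p).
\]
On the right, flatness of $\overline{\F}_p$ over $\F_p$ lets $-\otimes_{\F_p}\overline{\F}_p$ commute with $\Tor$ over $\F_p\Sigma_d$, and one has the canonical identification $(V_0^{\otimes d})\otimes_{\F_p}\overline{\F}_p \cong (V_0\otimes_{\F_p}\overline{\F}_p)^{\otimes d} = V^{\otimes d}$ as $\Sigma_d$-modules. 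Combining, one obtains
\[
\bigoplus_{d\geq 0}\HH_\bu(\Sigma_d, V_0^{\otimes d})\otimes_{\F_p}\overline{\F}_p \;\cong\; \bigoplus_{d\geq 0}\HH_\bu(\Sigma_d, V^{\otimes d}),
\]
and defining $\overline{H} := H\otimes_{\F_p}\overline{\F}_p$ yields the asserted isomorphism. Naturality in $X$ follows immediately from naturality of $H$ and of $\rho$.

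For the $GL(V)$-action, I would exploit that when $X$ is the wedge of spheres chosen so that $\overline{\HH}_\bu(X,\overline{\F}_p)=V$, every graded $\overline{\F}_p$-linear automorphism $g$ of $V$ can be realized (up to homotopy) by a self-map of $X$, because $[X,X]\to \End_{\overline{\F}_p}(\overline{\HH}_\bu(X,\overline{\F}_p))$ is surjective onto the graded endomorphisms for a wedge of spheres. Applying the functor $Q$ and passing to homology gives an action of $GL(V)$ on $\HH_\bu(Q(X),\overline{\F}_p)$; naturality of $\overline{H}$ in $X$, applied to such self-maps, shows that this action corresponds via $\overline{H}$ to the diagonal $GL(V)$-action on each $V^{\otimes d}$ that commutes with the place-permutation $\Sigma_d$-action. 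The explicit Araki--Kudo/Dyer--Lashof description of $\HH_\bu(Q(X),\overline{\F}_p)$ as a functor of $V$, to be recorded in the subsequent definitions and theorems, then furnishes the promised concrete description of the $GL(V)$-module structure.

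The main obstacle is the $GL(V)$-equivariance claim: the topological construction $Q(-)$ is functorial on spaces, not directly on graded vector spaces, so one must lift linear automorphisms of $V$ to self-maps of the model $X$ and check that the resulting action on $\HH_\bu(Q(X),\overline{\F}_p)$ is (i) well-defined, i.e.\ independent of the lift, and (ii) matches the diagonal action on $\HH_\bu(\Sigma_d, V^{\otimes d})$. Independence of lift follows from the fact that homotopic self-maps of $X$ induce the same map on singular homology of $Q(X)$, and the matching is a diagram-chase using the naturality built into $H$ and $\rho$; the remaining checks, in particular that the $GL(V)$ representations appearing are polynomial and restrict to the expected tensor products of symmetric and exterior powers, are what the subsequent sections will carry out in detail.
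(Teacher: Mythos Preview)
Your derivation of the isomorphism $\overline{H}$ is correct and is exactly what the paper does: apply Theorem~\ref{thm:araki.kudo.dyer.lashof} over $\F_p$ and tensor up using the universal coefficient isomorphism $\rho$.

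The gap is in your treatment of the $GL(V)$-action. Your claim that for a wedge of spheres $X$ the map $[X,X]\to \End_{\overline{\F}_p}(\overline{\HH}_\bu(X,\overline{\F}_p))$ is surjective onto graded endomorphisms is false. If $X=\vee_k S^n$ with $n\geq 2$, then by Hurewicz $[X,X]\cong M_k(\Z)$, and the induced map on $\HH_n(X;\overline{\F}_p)\cong \overline{\F}_p^{\,k}$ factors through $M_k(\F_p)\subset M_k(\overline{\F}_p)$. So only those automorphisms of $V$ with $\F_p$-entries lift to self-maps of $X$; an arbitrary $g\in GL(V)$ over $\overline{\F}_p$ does not. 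Consequently your proposed topological realization of the $GL(V)$-action cannot work, and with it the equivariance argument via naturality in $X$ collapses.

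The paper avoids this entirely. It does not attempt to realize $GL(V)$ by self-maps of $X$. Instead it transports the purely algebraic diagonal $GL(V)$-action on $\oplus_d \HH_\bu(\Sigma_d,V^{\otimes d})$ across $\overline{H}$ and then describes it in terms of the generators $\overline{Q}_I(v_\gamma)$: the key new input (beyond what is inherited from $\F_p$) is the scalar formula $\overline{Q}_i(\lambda x)=\lambda^p\,\overline{Q}_i(x)$ for $\lambda\in\overline{\F}_p$, proved directly in Lemma~\ref{lem:scalars} by analyzing $\HH_\bu(\Z/p\Z,V^{\otimes p})$ via the minimal free resolution and the decomposition of $V^{\otimes p}$ into its fixed and free parts. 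That formula, together with additivity, is what pins down the $GL(V)$-module structure on the polynomial/exterior generators, and it is genuinely an $\overline{\F}_p$-level computation that cannot be obtained just by tensoring the $\F_p$-story or by functoriality in $X$.
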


\begin{rem}\label{remark:spaces}
\begin{enumerate}
  \item A refined version of Theorem \ref{thm:araki.kudo.dyer.lashof} is
stated below as Theorem \ref{thm:astable.splitting}.
  \item The algebraic decomposition implied by Theorems
\ref{thm:araki.kudo.dyer.lashof.operations.over.overline.F},
\ref{thm:homology.splitting}, and \ref{thm:araki.kudo.dyer.lashof}
correspond precisely to a geometric decomposition of the space
$Q(X)$, at least after sufficient suspensions (as given in Theorem
\ref{thm:astable.splitting}) \cite{cmt, k,snaith}.
  \item The main new ingredient here is the action of $GL(V)$; that action is described in Sections
\ref{sec:The.prime.two} and \ref{sec:Odd.primes} in terms of
operations known either as Dyer-Lashof operations or
Araki-Kudo-Dyer-Lashof operations \cite{ak,dl} with some properties
listed next.
\end{enumerate}

\end{rem}

Recall the Araki-Kudo-Dyer-Lashof operations \cite{ak,clm, dl} given
by homomorphisms $$Q_i: \HH_s(Q(X),\F_2) \to
\HH_{i+2s}(Q(X),\F_2),$$ and if $p$ is odd, $$Q_i: \HH_s(Q(X),\F_p)
\to \HH_{i(p-1)+ps}(Q(X),\F_p)$$ with $s+i \equiv 0(\text{mod }2)$.
The $Q_i$ are linear maps over $\F_p$.

\begin{defin}\label{defin:operations}
There are functions $$\overline{Q}_i: \HH_s(Q(X),\overline{\F}_2)
\to \HH_{i+2s}(Q(X),\overline{\F}_2),$$  and if $p$ is odd,
$$\overline{Q}_i: \HH_s(Q(X),\overline{\F}_p) \to
\HH_{i(p-1)+ps}(Q(X),\overline{\F}_p)$$  defined by the formula
$\overline{Q}_i(x) = \rho(Q_i(x) \otimes 1).$
\end{defin}

\begin{rem}\label{remark:non.linearity}
The functions $\bar{Q}_i$ are additive, but not linear. The failure
of linearity is the content of the next theorem which will be used to determine the
$GL(V)$-action.
\end{rem}

\begin{thm}\label{thm:araki.kudo.dyer.lashof.operations.over.overline.F}
Let $x, y$ denote elements in $\HH_s(Q(X),\overline{\F}_p)$. The
functions $\overline{Q}_i$ defined above satisfy the following
properties.

\begin{itemize}
\item[(a)] $\overline{Q}_i(x+y) = \overline{Q}_i(x) + \overline{Q}_i(y)$,
\item[(b)] $\overline{Q}_i(\lambda x) = (\lambda)^p \overline{Q}_i(x)$ for $\lambda \in \overline{\F}_p$.
\end{itemize}

Let $V = \overline{\HH}_\bullet(X, \overline{\F}_p)$. The action of
$GL(V)$ on $\oplus_{d \geq 0}\HH_\bu(\Sigma_d, V^{\otimes d})$ is
derived from these formulas in Section \ref{sec:The.prime.two} in
case $p = 2$ and Section \ref{sec:Odd.primes} in case $p$ is odd.
\end{thm}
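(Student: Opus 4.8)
The plan is to establish Theorem~\ref{thm:araki.kudo.dyer.lashof.operations.over.overline.F} by reducing everything to the already-known behaviour of the integral (or $\F_p$) operations $Q_i$ through the universal coefficient isomorphism $\rho$, which is where the semilinearity will enter. First I would recall that for path-connected CW complexes of finite type the natural map $\rho\colon \HH_\bullet(Q(X),\F_p)\otimes_{\F_p}\overline{\F}_p \to \HH_\bullet(Q(X),\overline{\F}_p)$ is an isomorphism of $\overline{\F}_p$-vector spaces, and that by definition $\overline{Q}_i(x)=\rho(Q_i(\tilde x)\otimes 1)$ where $\tilde x$ is a preimage of $x$ under $\rho$. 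The first technical point to address is well-definedness: since $Q_i$ is $\F_p$-linear on $\HH_s(Q(X),\F_p)$, the composite $x\mapsto \rho(Q_i(x)\otimes 1)$ does not depend on the choice of $\F_p$-structure used to split $\overline{\F}_p$-homology classes into a sum $\sum_j c_j\otimes w_j$ with $c_j\in \HH_\bullet(Q(X),\F_p)$ and $w_j\in\overline{\F}_p$; but one must check this is genuinely a function of $x$ alone rather than of the chosen decomposition. This follows because $\rho$ is an isomorphism, so each $x$ has a unique expression once a basis of $\overline{\F}_p$ over $\F_p$ is fixed, and the resulting value is independent of the basis precisely because of part~(b), which I now derive.

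For part~(a), additivity of $\overline{Q}_i$: write $x=\rho(\sum_j a_j\otimes w_j)$ and $y=\rho(\sum_j b_j\otimes w_j)$ with respect to a fixed $\F_p$-basis $\{w_j\}$ of $\overline{\F}_p$ (infinite, but only finitely many terms are nonzero for any given class). Then $x+y=\rho(\sum_j(a_j+b_j)\otimes w_j)$, and since each $Q_i$ is additive on $\HH_\bullet(Q(X),\F_p)$, we get $Q_i(a_j+b_j)=Q_i(a_j)+Q_i(b_j)$. Applying $\rho(-\otimes 1)$ term by term and using that $\rho$ is additive yields $\overline{Q}_i(x+y)=\overline{Q}_i(x)+\overline{Q}_i(y)$. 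The subtle point—and the one genuine obstacle—is that $Q_i$ is \emph{not} additive in the naive componentwise sense on a tensor product unless one uses the Cartan/additivity formula for Dyer--Lashof operations on a sum $\sum u_j$ of homology classes. For the prime $2$ the operation $Q_i$ already satisfies $Q_i(u+v)=Q_i(u)+Q_i(v)$ identically (there are no decomposable correction terms for the \emph{top} operation in this additive guise, because $Q_i$ here denotes the Araki--Kudo operation on the infinite loop space $Q(X)$, where additivity holds on the nose); for odd $p$ one uses the analogous additivity of the Dyer--Lashof operations on $H_\bullet(Q(X);\F_p)$ viewed via the loop-sum $H$-space structure. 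So part (a) reduces entirely to the additivity of the $\F_p$-operations, which is classical \cite{ak,dl,clm}, together with additivity of $\rho$.

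For part~(b), scalar behaviour: given $\lambda\in\overline{\F}_p$, write $x=\rho(c\otimes \mu)$ for a single $\F_p$-class $c$ and $\mu\in\overline{\F}_p$ (every class is a finite sum of such, and by part~(a) it suffices to treat one summand). Then $\lambda x=\rho(c\otimes \lambda\mu)$. To compute $\overline{Q}_i(\lambda x)$ I must re-expand $c\otimes\lambda\mu$ in terms of the $\F_p$-structure: the point is that $Q_i$ applied to an $\F_p$-class $c$ multiplied by a scalar $\nu\in\F_p$ gives $Q_i(\nu c)=\nu^p Q_i(c)=\nu Q_i(c)$ since $\nu^p=\nu$ in $\F_p$; this is \emph{linearity} over $\F_p$. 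The $p$-th power appears only after extending scalars: $\overline{Q}_i(\lambda x) = \rho\big(Q_i(c)\otimes (\lambda\mu)^{?}\big)$—here one tracks that under $\rho^{-1}$ the class $\lambda x$ decomposes with the scalar $\lambda$ attached \emph{linearly}, but the definition $\overline{Q}_i(y)=\rho(Q_i(\tilde y)\otimes 1)$ forces the scalar to be absorbed into the $\F_p$-part $\tilde y$, and multiplying an $\F_p$-homology class by the scalar $\lambda$ is not an $\F_p$-operation, so one must instead factor $\lambda = $ a formal sum over the basis and use the Frobenius-twisted additivity from (a). Concretely: if $\lambda=\sum_j \nu_j w_j$ with $\nu_j\in\F_p$, then $\lambda x=\sum_j \rho(\nu_j c\otimes w_j\mu')$ appropriately, and applying (a) plus $Q_i(\nu_j c)=\nu_j^p Q_i(c)$ and then reassembling the scalars over $\overline{\F}_p$ produces exactly $\lambda^p \overline{Q}_i(x)$, because $(\sum_j \nu_j w_j)^p = \sum_j \nu_j^p w_j^p = \sum_j \nu_j w_j^p$ and the $w_j^p$ recombine to give the Frobenius image $\lambda^p$. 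I expect the bookkeeping in this last reassembly—showing that the ``outer'' $\overline{\F}_p$-scalars recombine to $\lambda^p$ rather than $\lambda$—to be the main obstacle; it is purely formal but requires carefully distinguishing the $\F_p$-linear structure on which $Q_i$ is defined from the $\overline{\F}_p$-semilinear structure that $\rho$ transports. Finally, the concluding sentence of the theorem is not a claim to be proved here but a forward reference: the $GL(V)$-action on $\oplus_{d\ge 0}\HH_\bu(\Sigma_d,V^{\otimes d})$ is assembled in Sections~\ref{sec:The.prime.two} and~\ref{sec:Odd.primes} precisely out of the additive-but-Frobenius-semilinear operations $\overline{Q}_i$ together with the $\F_p$-linear structure coming from $\rho$, so nothing further is required at this stage beyond properties (a) and (b).
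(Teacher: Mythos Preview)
Your treatment of part~(a) is fine and matches the paper: additivity of $\overline{Q}_i$ reduces to the classical additivity of the $\F_p$-operations $Q_i$, which the paper simply cites from \cite{ak,dl,clm}.

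Your argument for part~(b), however, has a genuine gap. You try to derive $\overline{Q}_i(\lambda x)=\lambda^p\overline{Q}_i(x)$ purely from the universal coefficient isomorphism $\rho$ and the $\F_p$-linearity of $Q_i$, by decomposing $\lambda=\sum_j \nu_j w_j$ over an $\F_p$-basis $\{w_j\}$ of $\overline{\F}_p$ and ``reassembling''. But in that setup the scalars $w_j$ sit in the \emph{outer} tensor factor, where $Q_i$ never touches them; there is no mechanism in your argument for $w_j$ to become $w_j^p$. If one interprets $\overline{Q}_i(\rho(c\otimes w)):=\rho(Q_i(c)\otimes w)$, then $\overline{Q}_i$ is $\overline{\F}_p$-\emph{linear}, giving $\lambda$ rather than $\lambda^p$. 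If instead one tries to ``absorb $\lambda$ into the $\F_p$-part'', as you suggest, there is simply no $\F_p$-class $\tilde y$ with $\rho(\tilde y\otimes 1)=\lambda x$ when $\lambda\notin\F_p$. So the Frobenius twist cannot be extracted from universal coefficients alone; it must come from somewhere else.

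The paper supplies that ``somewhere else'' in Lemma~\ref{lem:scalars}, and the idea is different from yours. One works directly at the chain level: the operation $\overline{Q}_i(x)$ for $x\in V$ is represented (up to a scalar depending only on $i$ and $\deg x$) by the class of $g_i\otimes x^{\otimes p}$ in $\HH_\bullet(\Z/p\Z,V^{\otimes p})$, using the minimal resolution $B_\bullet$ and the decomposition $V^{\otimes p}\cong V^{\otimes p}(\mathrm{fix})\oplus V^{\otimes p}(\mathrm{free})$ of Lemma~\ref{lem:direct.sum.decomposition}. The $p$-th power then appears for the transparent reason that $(\lambda e_\alpha)^{\otimes p}=\lambda^p\, e_\alpha^{\otimes p}$. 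This is the missing input: the semilinearity is a consequence of the fact that the Dyer--Lashof operation passes through the $p$-fold tensor power, not a formal consequence of base change.
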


The statements in this theorem follow at once from classic work
\cite{ak,dl} and the formula $\overline{Q}_i(x) = \rho(Q_i(x)
\otimes 1)$ with the sole exception of the statement
$\overline{Q}_i(\lambda x) = (\lambda)^p \overline{Q}_i(x)$ for
$\lambda \in \overline{\F}_p$. A proof of this last statement is
given in Lemma \ref{lem:scalars} below.

The explanation for why Theorem
\ref{thm:araki.kudo.dyer.lashof.operations.over.overline.F} suffices
to specify the $GL(V)$-action on $\oplus_{d \geq 0}\HH_\bu(\Sigma_d,
V^{\otimes d})$ is the subject of the next subsection.

\subsection{}\label{sec:explanation.of.the.connection}
This section provides a sketch of the connection between Theorem
\ref{thm:araki.kudo.dyer.lashof.operations.over.overline.F}  and the
$GL(V)$-action on $\oplus_{d \geq 0}\HH_\bu(\Sigma_d, V^{\otimes
d})$.

Let $V = \overline{\HH}_\bullet(X, \overline{\F}_p)$. The direct sum
$\oplus_{d \geq 0}\HH_\bu(\Sigma_d, V^{\otimes d})$ is naturally an
algebra generated by elements in $V$ and compositions of the
operations $\overline{Q}_i(-)$. Thus to identify the $GL(V)$-action,
it suffices to identify the induced action on

\begin{enumerate}
  \item elements $v \in V$ together with the elements $Q_I(v)$ where $Q_I(-)$
  denotes compositions of the operations
$\overline{Q}_i(-)$, and
  \item the action on products of the $Q_I(v)$.
\end{enumerate}

Thus the results in Theorem
\ref{thm:araki.kudo.dyer.lashof.operations.over.overline.F} suffice
to give the requisite commutation formulas with elements in $GL(V)$
given by \begin{enumerate}
\item $\overline{Q}_i(x+y) = \overline{Q}_i(x) + \overline{Q}_i(y)$,
and
\item $\overline{Q}_i(\lambda x) = (\lambda)^p \overline{Q}_i(x)$ for $\lambda \in \overline{\F}_p$.

\end{enumerate}

In Section \ref{sec:The.prime.two} and Section \ref{sec:Odd.primes}
below, $ \oplus_{d \geq 0}\HH_\bu(\Sigma_d, V^{\otimes d})$ is given
by

\begin{enumerate}
  \item  a polynomial ring with generators given by iterates
  of the operations $\overline{Q}_i$ applied to elements $x \in V = \overline{\HH}_\bullet(X,
  \overline{\F}_p)$ for $p = 2$, and

  \item a tensor product of a polynomial ring with an exterior algebra with generators given by iterates
  of the operations $\overline{Q}_i$ as well as Bocksteins applied to elements $x \in V = \overline{\HH}_\bullet(X,
  \overline{\F}_p)$ for $p > 2$.
\end{enumerate}

Some additional information is given next concerning the connection
between the operations above and elements in $\HH_\bu(\Sigma_d,
V^{\otimes d})$.

\begin{enumerate}
  \item Assume that $v \in V = \overline{\HH}_\bullet(X,
\overline{\F}_p)$. Then $\overline{Q}_i(v)$ is an element in
$\HH_\bu(\Sigma_p, V^{\otimes p})$.
  \item Assume that $w \in \HH_\bu(\Sigma_d, V^{\otimes d})$, then
$\overline{Q}_i(w)$ is an element in $\HH_\bu(\Sigma_{pd},
V^{\otimes pd})$.
  \item Assume that $u_1 \in \HH_\bu(\Sigma_{d_1}, V^{\otimes d_1})$, and
$u_2 \in \HH_\bu(\Sigma_{d_2}, V^{\otimes d_2})$, then the product
$u_1\cdot u_2$ is an element in $\HH_\bu(\Sigma_{(d_1+ d_2)},
V^{\otimes (d_1+ d_2)})$.
\end{enumerate}

Thus to give the $GL(V)$-action, it suffices to identify this action
on the elements of $V$ and to evaluate the extension of this action
to composites of the $\overline{Q}_i$ and their products as
specified inductively by Theorem
\ref{thm:araki.kudo.dyer.lashof.operations.over.overline.F}. This
process is carried out in finer detail in the next two sections.

\section{The prime $2$}\label{sec:The.prime.two}
\subsection{}

The purpose of this section is to describe the known homology groups
$\HH_\bu(Q(X),\overline{\F}_2)$ and then to describe the
$GL(V)$-action on $\HH_\bu(Q(X),\overline{\F}_2)$.

Let $\{v_{\gamma}\}$ denote a choice of basis for $V =
\overline{\HH}_\bu(X, \F_2)$. Then there is an isomorphism of
algebras
$$\HH_\bu(Q(X),\F_2) \to S[Q_I(v_{\gamma})]$$
where the following hold.

\begin{enumerate}
\item The algebra $S[Q_I(v_{\gamma})]$ denotes the polynomial algebra with
generators $Q_I(v_{\gamma})$.
\item The generators $Q_I(v_{\gamma})$ are specified
by $I = (i_1,i_2,\cdots, i_t)$ with $$0 < i_1 \leq i_2 \leq \cdots
\leq i_t < \infty, $$ and $$Q_I(v_{\gamma})= Q_{i_i} Q_{i_2}\cdots
Q_{i_t}(v_{\gamma}).$$
\item The empty sequence $I=\emptyset$ is allowed and in this case
$Q_{\emptyset}(v_{\gamma})$ is defined to be $v_{\gamma}$.

\item The weight of a product is defined by $w(X\cdot Y ) = w(X) +w(Y)$.
\end{enumerate}
Define the weight of a monomial $Q_{I}(v_{\gamma})$ where $I =
(i_1,i_2,\cdots, i_t)$ to be $w(Q_{I}(v_{\gamma})) = 2^t.$ A basis
for the symmetric algebra is given by choices of the products of
monomials
$$\mathcal B(V)=\{Q_{I_1}(v_{\gamma_1}) \cdots
Q_{I_k}(v_{\gamma_s})\}.$$  In addition, a basis for
$\HH_\bu(\Sigma_d, V^{\otimes d})$ are those monomials in $\mathcal
B(V)$ with
$$w(Q_{I_1}(v_{\gamma_1})) + \cdots +
w(Q_{I_s}(v_{\gamma_s})) = d$$ as given in \cite{clm}.

\subsection{} The required modification for coefficients in $\overline{\F}_2$ is
stated next. These follow at once by tensoring with
$\overline{\F}_2$, appealing to the universal coefficient theorem
and quoting Theorem
\ref{thm:araki.kudo.dyer.lashof.operations.over.overline.F}
concerning the $GL(V)$-action.

Let $\{v_{\gamma}\}$ denote a choice of basis for the reduced
homology $V = \overline{\HH}_\bu(X,\overline{\F}_2)$. Then there is
an isomorphism of algebras

$$\HH_\bu(Q(X),\overline{\F}_2) \to S[\overline{Q}_I(v_{\gamma})]$$
where the following hold.

\begin{enumerate}
  \item The algebra $S[\overline{Q}_I(v_{\gamma})]$ denotes the polynomial algebra with
  generators $\overline{Q}_I(v_{\gamma})$.
  \item The generators $\overline{Q}_I(v_{\gamma})$ are specified
 by $I = (i_1,i_2,\cdots, i_t)$ with $$0 < i_1 \leq i_2 \leq
\cdots \leq i_t < \infty, $$ and $$\overline{Q}_I(v_{\gamma})=
\overline{Q}_{i_i} \overline{Q}_{i_2}\cdots
\overline{Q}_{i_t}(v_{\gamma}).$$

\item The formula $$\overline{Q}_I(\alpha x) = {\alpha}^{2^t}\overline{Q}_I(x)$$ holds for
$\alpha \in k$ and $I = (i_1,i_2,\cdots, i_t)$ by Theorem
\ref{thm:araki.kudo.dyer.lashof.operations.over.overline.F}.

\item The empty sequence $I=\emptyset$ is allowed and in this case
$\overline{Q}_{\emptyset}(v_{\gamma})$ is defined to be
$v_{\gamma}$.
\end{enumerate}

Define the weight of a monomial $\overline{Q}_{I}(v_{\gamma})$ to be
$w(\overline{Q}_{I}(v_{\gamma})) = 2^t$ where $I = (i_1,i_2,\cdots,
i_t)$. The weight of a product is defined by $w(X\cdot Y ) = w(X)
+w(Y).$

A basis for the symmetric algebra $\HH_\bu(Q(X),\overline{\F}_2)$,
isomorphic to $S[\overline{Q}_I(v_{\gamma})]$, is given by a choice
of products of monomials for a polynomial algebra
$$\mathcal B(V)=\{\overline{Q}_{I_1}(v_{\gamma_1}) \cdots
\overline{Q}_{I_k}(v_{\gamma_k})\}.$$ This feature follows with
coefficients in $\overline{\F}_2$ directly from the above remarks
together with the universal coefficient theorem and the analogous
results in \cite{clm} where coefficients are taken in $\F_2$.

Then a basis for $\HH_\bu(\Sigma_d, V^{\otimes
d}\otimes_{\F_2}\overline{\F}_2)$ are those monomials in $\mathcal
B(V)$ with $w(\overline{Q}_{I_1}(v_{\gamma_1})) + \cdots +
w(\overline{Q}_{I_k}(v_{\gamma_k})) = d.$ The action of $GL_{n}(k)$
on $\oplus_{d \geq 0} \HH_\bu(\Sigma_d, V^{\otimes d})$ follows from
the formula
$$\overline{Q}_I(\alpha \cdot x + \beta \cdot y) = \alpha^{2^t}\cdot \overline{Q}_I(x) + \beta^{2^t}\cdot
\overline{Q}_I(y)$$ for scalars $\alpha$ and $\beta$ with $I =
(i_1,i_2,\cdots, i_t)$ for $0 < i_1 \leq i_2 \leq \cdots \leq i_t$.
These elements also have a degree which correspond to the natural
degrees in $\HH_\bu(\Sigma_d, V^{\otimes d})$ (cf. Section
\ref{sec:degree.shift}).

\section{Odd primes}\label{sec:Odd.primes}
\subsection{}
We now describe the known homology groups $\HH_\bu(Q(X),\F_p)$ and
then  describe the $GL(V)$-action on $\HH_\bu(Q(X),k)$ for $p$ odd.
The methods are similar to those in Section \ref{sec:The.prime.two}.

Recall the homology of $Q(X)$ over $\F_p$ for odd primes $p$. There
are operations $$Q_j:\HH_n(Q(X),\F_p) \to
\HH_{j(p-1)+pn}(Q(X),\F_p)$$ for which

\begin{itemize}
  \item[(1)] $n+j \equiv 0(\text{mod }2)$ and
  \item[(2)] $j > 0$.
  \item[(3)] That is, each operation $Q_j$ is defined on even dimensional
  classes in case $j>0$ is even and  on odd dimensional classes in case $j$ is odd.
\end{itemize}

There is an additional operation given by the Bockstein
$$\beta:\HH_n(Q(X),\F_p)\to \HH_{n-1}(Q(X),\F_p)$$ for which $\beta^0$ denotes the
identity map $$\beta^0:\HH_n(Q(X),\F_p) \to \HH_n(Q(X),\F_p).$$

Let $\{v_{\alpha}\}$ denote a choice of basis for the reduced
homology groups $V = \overline{\HH}_\bu(X,\F_p)$. Then there is an
isomorphism of algebras
$$\HH_\bu(Q(X),\F_p) \to S[Q_J(v_{\alpha})|\hbox{deg}(Q_J(v_{\alpha}))\equiv 0(\text{mod }2)]
\otimes E[Q_J(v_{\alpha})|\hbox{deg}(Q_J(v_{\alpha}))\equiv
1(\text{mod }2)]$$ where
\begin{itemize}
\item[(4)] $\hbox{deg}(x)$ denotes the degree of an element $x$,
\item[(5)]  $S[Q_J(v_{\alpha})]$ denotes the polynomial algebra with
  generators $Q_J(v_{\alpha})$  (which may
  contain Bocksteins as given in the next paragraph), and
\item[(6)] $E[Q_J(v_{\alpha})]$ denotes the exterior algebra with
  generators $Q_J(v_{\alpha})$ (which may
  contain Bocksteins as given in the next paragraph).
\end{itemize}

The elements $Q_J(v_{\alpha})$ are described next where $J =
(\epsilon_1,j_1,\epsilon_2,j_2,\cdots, \epsilon_t,j_t)$ with $0 <
j_1 \leq j_2 \leq \cdots \leq j_t < \infty, $ $\epsilon_s = 0,1$
with $1 \leq s \leq t$. Then $$Q_J(v_{\alpha}) =
\beta^{\epsilon_1}Q_{j_1}\beta^{\epsilon_2}Q_{j_2}\cdots,
  \beta^{\epsilon_t}Q_{j_t}(v_{\alpha})$$ {\it whenever that operation is defined}.

The monomials in $Q_J(v_{\alpha})$ can be interpreted in terms of
the homology of the symmetric groups with coefficients in the tensor
powers of $\overline \HH_\bu(X,\F_p)$ as follows. Each monomial
$Q_J(v_{\alpha})$ has a weight given by $w(Q_J(v_{\alpha}))= p^t$
for $J = (\epsilon_1,j_1,\epsilon_2,j_2,\cdots, \epsilon_t,j_t).$
Furthermore, the weight of a product is defined by $w(X\cdot Y ) =
w(X) +w(Y).$ In case of the empty sequence $J=\emptyset$,
$Q_\emptyset(v_{\alpha})$ is defined to be $v_{\alpha}$.

Then the group $\HH_\bu(\Sigma_d, V^{\otimes d})$ for $V = \overline
\HH_\bu(X, \F_p)$ is the linear span of the product of monomials in
$$S[Q_J(v_{\alpha})|\hbox{deg}(Q_J(v_{\alpha}))\equiv 0(\text{mod }2)] \otimes
E[Q_J(v_{\alpha})|\hbox{deg}(Q_J(v_{\alpha}))\equiv 1(\text{mod
}2)]$$ of weight exactly $d$. As above, these elements also have a
degree which correspond to the natural degrees in $\HH_\bu(\Sigma_d,
V^{\otimes d})$.

\subsection{} The natural modifications for coefficients in $\overline{\F}_p$ are
stated next. In this section, homology is taken with
$\overline{\F}_p$-coefficients for odd primes $p$. There are
operations
$$\overline{Q}_j:\HH_n(Q(X),\overline{\F}_p) \to \HH_{j(p-1)+pn}(Q(X),\overline{\F}_p)$$ for
which

\begin{itemize}
  \item[(1)] $n+j \equiv 0(\text{mod }2)$ and
  \item[(2)] $j > 0$.
  \item[(3)] That is, each operation $\overline{Q_j}$ is defined (i) on even dimensional
  classes in case $j$ is even with $j>0$ and (ii) on odd dimensional classes in case $j$ is odd.
\end{itemize}

There is an additional operation given by the Bockstein
$$\beta:\HH_n(Q(X),\overline{\F}_p)\to \HH_{n-1}(Q(X),\overline{\F}_p)$$ for which $\beta^0$ denotes the
identity map $$\beta^0:\HH_n(Q(X),\overline{\F}_p) \to
\HH_n(Q(X),\overline{\F}_p).$$

Let $\{v_{\alpha}\}$ denote a choice of basis for the reduced
homology groups $V = \overline{\HH}_\bu(X,\overline{\F}_p)$. Then
there is an isomorphism of algebras
$$\HH_\bu(Q(X),\overline{\F}_p) \to S[\overline{Q}_J(v_{\alpha})|
\hbox{deg}(\overline{Q}_J(v_{\alpha}))\equiv 0(\text{mod }2)]
\otimes
E[\overline{Q}_J(v_{\alpha})|\hbox{deg}(\overline{Q}_J(v_{\alpha}))\equiv
1(\text{mod }2)]$$ where
\begin{itemize}
\item[(4)] $\hbox{deg}(x)$ denotes the degree of an element $x$,
\item[(5)]  $S[\overline{Q}_J(v_{\alpha})]$ denotes the polynomial algebra with
  generators $\overline{Q}_J(v_{\alpha})$ (which may
  contain Bocksteins), and
\item[(6)] $E[\overline{Q}_J(v_{\alpha})]$ denotes the exterior algebra with
  generators $\overline{Q}_J(v_{\alpha})$ (which may
  contain Bocksteins).
\end{itemize}

The elements $\overline{Q}_J(v_{\alpha})$ are described next where
$J = (\epsilon_1,j_1,\epsilon_2,j_2,\cdots, \epsilon_t,j_t)$ with $0
< j_1 \leq j_2 \leq \cdots \leq j_t < \infty, $ $\epsilon_s = 0,1$
with $1 \leq s \leq t$. Then $$\overline{Q}_J(v_{\alpha}) =
\beta^{\epsilon_1}\overline{Q}_{j_1}\beta^{\epsilon_2}\overline{Q}_{j_2}\cdots,
  \beta^{\epsilon_t}\overline{Q}_{j_t}(v_{\alpha})$$ {\it whenever that operation is defined}.

The monomials in $\overline{Q}_J(v_{\alpha})$ can be interpreted in
terms of the homology of the symmetric groups with coefficients in
the tensor powers of $\overline \HH_\bu(X,\overline{\F}_p)$ as
follows. Each monomial $\overline{Q}_J(v_{\alpha})$ has a weight
given by $w(\overline{Q}_J(v_{\alpha}))= p^t$ for $J =
(\epsilon_1,j_1,\epsilon_2,j_2,\cdots, \epsilon_t,j_t).$
Furthermore, the weight of a product is defined by $w(X\cdot Y ) =
w(X) +w(Y).$ In case of the empty sequence $J=\emptyset$,
$\overline{Q}_\emptyset(v_{\alpha})$ is defined to be $v_{\alpha}$.

Then the group $\HH_\bu(\Sigma_d, V^{\otimes d})$ for $V = \overline
\HH_\bu(X,\overline{\F}_p)$ is the linear span of the product of
monomials in
$$S[\overline{Q}_J(v_{\alpha})|d(\overline{Q}_J(v_{\alpha}))\equiv 0
(\text{mod }2)] \otimes
E[\overline{Q}_J(v_{\alpha})|d(\overline{Q}_J(v_{\alpha}))\equiv 1
(\text{mod }2)]$$ of weight exactly $d$.

Furthermore $\overline{Q}_i(\lambda x) = (\lambda)^p
\overline{Q}_i(x)$ for $\lambda \in \overline{\F}_p$. Thus
$$\overline{Q}_J(\lambda v_{\alpha}) =
{\lambda}^{p^t}\overline{Q}_J(v_{\alpha})$$ for $J =
(\epsilon_1,j_1,\epsilon_2,j_2,\cdots, \epsilon_t,j_t).$ This
formula follows by iterating the formula $$\overline{Q}_i(\lambda x)
= {\lambda}^{p}\overline{Q}_i(x)$$ as stated in Theorem
\ref{thm:araki.kudo.dyer.lashof.operations.over.overline.F}.

\section{Homology operations for $Q(X)$ over $\overline{\F}_p$} \label{sec:Homology operations for $QX$ over $F$, 10.july.2007}
\subsection{}

In this section we work out the formula $\overline{Q}_i(\lambda x) =
\lambda^p \overline{Q}_i(x)$ for $\lambda \in \overline{\F}_p$ as
stated in Lemma \ref{lem:scalars} below. The proof of this lemma
then finishes the proof of Theorem
\ref{thm:araki.kudo.dyer.lashof.operations.over.overline.F}.

The operations $Q_i$, originally due to Araki and Kudo for $p = 2$
with odd primary versions due to Dyer and Lashof, were defined over
the field with $p$ elements \cite{ak,dl}. These operations in the
special case for the homology of the symmetric groups were implicit
in Nakaoka's computations \cite{Nakaoka}. In addition, these
operations admit extensions to homology taken with coefficients in
$\overline{\F}_p$ as described above via the universal coefficient
theorem. Properties of these operations over $\overline{\F}_p$ are
obtained from the structure of $\HH_\bu(\Sigma_p, V^{\otimes p})$.
These homology groups are easy to work out using the $p$-Sylow
subgroup of $\Sigma_p$.

As preparation, it is convenient to first recall properties of
$V^{\otimes p}$ where $V$ is a vector space over a field $k$ of
characteristic $p$ with further choices of either $k = \F_p$, or $k
= \overline{\F}_p$ made explicit below. The cyclic group of order
$p$ generated by the $p$-cycle $\sigma_p = (1,2, \cdots, p)$ acts
naturally on $V^{\otimes p}$. Thus $V^{\otimes p}$ is naturally a
$k[\Z/p\Z]$-module.

Choose a totally ordered basis for $V$, say $e_{\alpha}$ for $\alpha
\in S$. A basis for $V^{\otimes p}$ is described next. Consider the
multi-index  $A = (\alpha_1, \cdots \alpha_p)$, $\alpha_i \in S$,
with $$e_{A} = e_{\alpha_1}\otimes \cdots \otimes e_{\alpha_p}.$$

\begin{itemize}
\item[(1)] Given the set $S$, consider the $p$-fold product $S^{\times p}$.
Observe that  $\Z/p\Z$ acts on the set $S^{\times p}$ via the
$p$-cycle $\sigma_p = (1,2, \cdots, p)$.

\item[(2)] Let $\Delta^{\times p}(S)$ denote the diagonal subset with
$$\Delta^{\times p}(S)= \{(\alpha_1, \cdots \alpha_p)\in
S^{\times p} | \alpha_i = \alpha_j \hbox{ for all } i, j\}.$$

\item[(3)] Let $\Gamma^{\times p}(S)$ denote the complement of
$\Delta^{\times p}(S)$ in $S^{\times p}$. Thus $$\Gamma^{\times
p}(S) = \{(\alpha_1, \cdots \alpha_p)\in S^{\times p} | \alpha_i\neq
\alpha_j \hbox{ for some } i < j\}.$$

\item[(4)] The action of $\Z/p\Z$ on the set $S^{\times p}$ restricts to
actions on both $\Delta^{\times p}(S)$, and $\Gamma^{\times p}(S)$.
The set $S^{ \times p}$ is a disjoint union of sets
$$S^{\times p} = \Delta^{\times p}(S) \amalg \Gamma^{\times p}(S)$$
as a $\Z/p\Z$-set.

\item[(5)] Define the set $T(S)$ by a choice of elements, one in each
$\Z/p\Z$-orbit in $\Gamma^{\times p}(S)$.

\item[(6)] In case $S$ is finite of cardinality $N$, then
\begin{enumerate}
\item $S^{ \times p}$ has cardinality $N^p$,
\item $\Delta^{\times p}(S)$ has cardinality $N$,
\item $\Gamma^{\times p}(S)$ has cardinality $N^p -N$
which is divisible by $p$, and
\item the cardinality of $T(S)$ is $(1/p)(N^p-N)$.
\end{enumerate}
\end{itemize}

A specific choice of basis for $V^{\otimes p}$ consists to two types
of elements as follows.
\begin{itemize}
\item[(1)] Define $e_A = e_{\alpha}^{\otimes p} = \overbrace{e_{\alpha}\otimes \cdots \otimes e_{\alpha}}^{\text{$p$
times}}$, with $A = (\alpha, \cdots \alpha) \in \Delta^{\times
p}(S)$ for all $\alpha \in S$.
\item[(2)] Define $e_B = e_{\alpha_1} \otimes \cdots \otimes e_{\alpha_p}$, with $B = (\alpha_1, \cdots, \alpha_p) \in T(S)$
where at least two of the $\alpha_j$ differ for all $B \in T(S)$.
\end{itemize}

Consider the $k[\Z/p\Z]$-modules given by
\begin{itemize}
\item[(1)] the cyclic $k[\Z/p\Z]$-module spanned by $e_{A}= \overbrace{e_{\alpha} \otimes \cdots \otimes e_{\alpha}}^{\text{$p$
times}}$ denoted $<e_A>$ for $A \in \Delta^{\times p}(S)$, and
\item[(2)] the cyclic $k[\Z/p\Z]$-module spanned by a \emph{choice} of the elements $e_{B} = e_{\alpha_1}
\otimes \cdots \otimes e_{\alpha_p}$ denoted $<e_B>$ with $B =
(\alpha_1, \cdots, \alpha_p) \in T(S)$.
\end{itemize}

The choices of basis elements above then give a direct sum
decomposition of  $V^{\otimes p}$.

\begin{enumerate}
\item Let $${V^{\otimes p}}(\mbox{fix})$$ denote the linear span in $V^{\otimes p}$ of the $<e_A>$ for all $A \in
\Delta^{\times p}(S)$.

\item Let $$V^{\otimes p}(\mbox{free})$$ denote the linear span in $V^{\otimes p}$ of the $<e_B>$ for all $B =
(\alpha_1, \cdots, \alpha_p) \in T(S)$.
\end{enumerate}

Versions of the next lemma arose in work of P.~A.~Smith, and
Steenrod in which $V^{\otimes p}(\mbox{free})_{\Z/p\Z}$ denotes the
natural module of coinvariants.
\begin{lem} \label{lem:direct.sum.decomposition}
Let $k$ be any field of characteristic $p$. The natural inclusions
$${V^{\otimes p}}(\mbox{fix}) \oplus V^{\otimes p}(\mbox{free})\to
V^{\otimes p}$$ give an isomorphism of $k[\Z/p\Z]$-modules. Thus
there are induced isomorphisms
$$\HH_\bu(\Z/p\Z, {V^{\otimes p}}(\mbox{fix}) \oplus
V^{\otimes p}(\mbox{free})) \to \HH_\bu(\Z/p\Z, {V^{\otimes p}}),$$
and $$\HH_\bu(\Z/p\Z, {V^{\otimes p}}(\mbox{fix})) \oplus
\HH_\bu(\Z/p\Z,V^{\otimes p}(\mbox{free})) \to \HH_\bu(\Z/p\Z,
{V^{\otimes p}}).$$

Furthermore,
\begin{enumerate}
  \item ${V^{\otimes p}}(\mbox{fix})$ is a trivial
 $k[\Z/p\Z]$-module, and
  \item $V^{\otimes p}(\mbox{free})$ is a free
$k[\Z/p\Z]$-module.
\end{enumerate}

Thus there are isomorphisms $$(\HH_\bu(\Z/p\Z,k) \otimes_k
{V^{\otimes p}}(\mbox{fix})) \oplus V^{\otimes
p}(\mbox{free})_{\Z/p\Z} \to \HH_\bu(\Z/p\Z, {V^{\otimes p}}).$$
\end{lem}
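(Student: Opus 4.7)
The plan is to establish everything at the level of the $k[\Z/p\Z]$-module $V^{\otimes p}$ and then pass to homology using standard vanishing facts for free modules over a $p$-group in characteristic $p$.

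First I would verify the direct sum decomposition $V^{\otimes p} \cong V^{\otimes p}(\mbox{fix}) \oplus V^{\otimes p}(\mbox{free})$. The union of the chosen basis elements $\{e_A : A \in \Delta^{\times p}(S)\}$ together with the $\Z/p\Z$-orbits $\{\sigma_p^i e_B : 0 \le i < p,\ B \in T(S)\}$ is a basis for $V^{\otimes p}$, since the underlying index set $S^{\times p} = \Delta^{\times p}(S) \amalg \Gamma^{\times p}(S)$ decomposes as a $\Z/p\Z$-set and $T(S)$ contains exactly one representative from each free orbit in $\Gamma^{\times p}(S)$. The cyclic permutation $\sigma_p$ preserves each summand, so the decomposition is $k[\Z/p\Z]$-linear.

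Next I would identify the module structure of each summand. For $A = (\alpha,\ldots,\alpha)\in \Delta^{\times p}(S)$ the element $e_A = e_\alpha^{\otimes p}$ is fixed by $\sigma_p$, so $V^{\otimes p}(\mbox{fix})$ is a trivial $k[\Z/p\Z]$-module. For $B \in T(S)$ the stabilizer of $e_B$ in $\Z/p\Z$ is a subgroup; since $p$ is prime, this subgroup is either trivial or all of $\Z/p\Z$, and the latter forces $B \in \Delta^{\times p}(S)$, which is excluded. Hence the orbit of $e_B$ has size $p$, so $\langle e_B\rangle \cong k[\Z/p\Z]$ as a regular module, and $V^{\otimes p}(\mbox{free}) \cong \bigoplus_{B \in T(S)} k[\Z/p\Z]$ is free.

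To pass to homology, additivity of $\HH_\bu(\Z/p\Z,-)$ immediately yields the two intermediate isomorphisms. For the final formula, I would invoke two standard facts: (i) for a trivial module $W$ over any group $G$ and field $k$, there is a natural isomorphism $\HH_\bu(G, W) \cong \HH_\bu(G,k)\otimes_k W$ (this is the universal coefficient theorem, which is an isomorphism because $W$ is $k$-flat); applying this with $W = V^{\otimes p}(\mbox{fix})$ handles the fix part. (ii) A free $k[\Z/p\Z]$-module $F$ has $\HH_i(\Z/p\Z,F) = 0$ for $i > 0$ and $\HH_0(\Z/p\Z,F) = F_{\Z/p\Z}$; applying this with $F = V^{\otimes p}(\mbox{free})$ handles the free part, so its contribution to the total homology is concentrated in degree $0$ and equals the coinvariants $V^{\otimes p}(\mbox{free})_{\Z/p\Z}$. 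Combining these gives the stated isomorphism.

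I do not anticipate any genuine obstacle: the only nonformal ingredient is the stabilizer computation on $T(S)$, which depends on $p$ being prime, and this is the one place the hypothesis enters. The rest is bookkeeping with bases and invocation of standard homological algebra for cyclic $p$-groups in characteristic $p$.
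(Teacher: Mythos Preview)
Your proof is correct. The paper does not actually supply a proof of this lemma: it is stated as a classical fact, with the preceding sentence attributing versions of it to P.~A.~Smith and Steenrod, and the paper immediately moves on to use it in the proof of the next lemma. Your argument is exactly the standard one---partition the basis of $V^{\otimes p}$ according to the $\Z/p\Z$-orbit decomposition of $S^{\times p}$, use primality of $p$ to see that the nondiagonal orbits are free, and then invoke additivity of homology together with the usual identifications for trivial and free coefficient modules---so there is nothing to contrast.

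One small remark: you say the primality of $p$ is ``the one place the hypothesis enters,'' and that is right for the orbit-size computation, but note that the hypothesis $\operatorname{char} k = p$ is in fact not used anywhere in the lemma itself; the decomposition and the homology identifications hold over any field (indeed any commutative ring). The characteristic assumption only becomes relevant in how the paper applies the lemma afterward.
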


This information is used to prove the next lemma.

\begin{lem} \label{lem:scalars}
Assume that $k = \overline{\F}_p$ with $x \in V =
\HH_\bu(X,\overline{\F}_p)$. The formula $\overline{Q}_i(\lambda x)
= \lambda^p \overline{Q}_i(x)$ for $\lambda \in \overline{\F}_p$ is
satisfied.
\end{lem}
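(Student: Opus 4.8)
The plan is to reduce the statement to a computation in $\HH_\bu(\Z/p\Z, V^{\otimes p})$ using the semidirect-product (or wreath product) description of the operation $\overline Q_i$, and there to exploit the splitting of Lemma \ref{lem:direct.sum.decomposition}. Recall that for $x \in V = \overline\HH_\bu(X,\overline{\F}_p)$ the class $\overline Q_i(x)$ lives in $\HH_\bu(\Sigma_p, V^{\otimes p})$ and is the image, under the map induced by the inclusion $\Z/p\Z \hookrightarrow \Sigma_p$ of the $p$-Sylow subgroup, of a distinguished class built from $x^{\otimes p} \in V^{\otimes p}$ together with a generator of $\HH_\bu(\Z/p\Z, \F_p)$; this is precisely the classical Araki--Kudo--Dyer--Lashof construction and it is already in force via Definition \ref{defin:operations} and the preceding sections. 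So it suffices to understand how replacing $x$ by $\lambda x$ affects the corresponding class in $\HH_\bu(\Z/p\Z, V^{\otimes p})$, and then push forward to $\Sigma_p$.

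The key steps, in order, are as follows. First, I would record that if $x \in V$ is a basis vector $e_\alpha$ then $x^{\otimes p}$ spans a copy of the trivial summand inside $V^{\otimes p}(\mathrm{fix})$ of Lemma \ref{lem:direct.sum.decomposition}, so by that lemma the part of $\HH_\bu(\Z/p\Z, V^{\otimes p})$ relevant to $\overline Q_i(x)$ is $\HH_\bu(\Z/p\Z, k)\otimes_k \langle x^{\otimes p}\rangle$, and $\overline Q_i(x)$ corresponds to $\theta \otimes x^{\otimes p}$ for the appropriate generator $\theta \in \HH_i(\Z/p\Z,k)$ (up to the usual constants). Second, by additivity (Theorem \ref{thm:araki.kudo.dyer.lashof.operations.over.overline.F}(a)) it is enough to treat $x$ a basis vector, since a general $x$ is a sum of scalar multiples of basis vectors and the cross-terms vanish after passing to $(V^{\otimes p})(\mathrm{free})$ — more precisely one checks $\overline Q_i$ is unchanged by the choice of decomposition as in Lemma \ref{lem:direct.sum.decomposition}, and the failure of linearity is concentrated in the diagonal summand. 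Third, for a basis vector $x$, scaling $x \mapsto \lambda x$ scales the diagonal generator $x^{\otimes p} \mapsto \lambda^p\, x^{\otimes p}$ as an element of $V^{\otimes p}(\mathrm{fix})$, because the $\Z/p\Z$-fixed line $\langle x^{\otimes p}\rangle \subset V^{\otimes p}$ literally transforms by $\lambda^p$ under the diagonal $GL(V)$-action. Hence $\overline Q_i(\lambda x) = \theta \otimes (\lambda^p x^{\otimes p}) = \lambda^p\,\overline Q_i(x)$ in $\HH_\bu(\Z/p\Z, V^{\otimes p})$, and the same identity persists after the transfer/inclusion map to $\HH_\bu(\Sigma_p, V^{\otimes p})$ by naturality. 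Finally, one notes that over $\overline{\F}_p$ this is consistent with Definition \ref{defin:operations}: the universal coefficient isomorphism $\rho$ is $\overline{\F}_p$-linear in the coefficients but the class $Q_i(x)\otimes 1$ itself depends on $x$ only through $x^{\otimes p}$, so the Frobenius $\lambda \mapsto \lambda^p$ appears exactly once.

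The main obstacle I expect is bookkeeping the relationship between the abstract $GL(V)$-action on $\HH_\bu(\Sigma_p, V^{\otimes p})$ and the concrete ``$p$-th power on the diagonal'' that produces the $\lambda^p$: one must verify that the distinguished class representing $\overline Q_i(x)$ really is supported (after the splitting of Lemma \ref{lem:direct.sum.decomposition}) on $\langle x^{\otimes p}\rangle \otimes \HH_\bu(\Z/p\Z,k)$ and does not pick up a contribution from $V^{\otimes p}(\mathrm{free})_{\Z/p\Z}$ that could behave differently under scaling; this is true because $V^{\otimes p}(\mathrm{free})$ is $\Z/p\Z$-free so its coinvariants carry no higher homology and the Araki--Kudo--Dyer--Lashof class is by construction pulled back from the Borel construction of the free summand's complement. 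Once that localization of the class is pinned down, the scalar formula is immediate, and iterating it gives $\overline Q_J(\lambda v_\alpha) = \lambda^{p^t}\overline Q_J(v_\alpha)$ as used in Section \ref{sec:Odd.primes}.
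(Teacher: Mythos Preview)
Your approach is essentially the same as the paper's: reduce to $\HH_\bu(\Z/p\Z, V^{\otimes p})$ via the $p$-Sylow inclusion, invoke the splitting of Lemma~\ref{lem:direct.sum.decomposition}, and observe that on the diagonal summand $\langle x^{\otimes p}\rangle$ the scaling $x \mapsto \lambda x$ induces multiplication by $\lambda^p$, which is carried over to $\overline Q_i(x)$ because that class is represented by a generator of $\HH_i(\Z/p\Z,k)$ tensored with $x^{\otimes p}$. The paper phrases this last step slightly more concretely via the minimal free resolution $B_\bu$ (writing the class as $g_i \otimes (\lambda e_\alpha)^{\otimes p} = \lambda^p\, g_i \otimes e_\alpha^{\otimes p}$), and it does not spell out the reduction to basis vectors or the vanishing of higher homology on the free summand as carefully as you do, but the content is the same.
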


\begin{proof} Assume that all homology groups are taken with coefficients
in $k = \overline{\F}_p$. Observe that the natural map
$$\HH_\bu(\Z/p\Z, V^{\otimes p}) \to \HH_\bu(\Sigma_p, V^{\otimes p})$$
is an epimorphism as $\Z/p\Z$ is the $p$-Sylow subgroup of
$\Sigma_p$ and $\overline{\F}_p$ has characteristic $p$.
Furthermore, $$\HH_\bu(\Z/p\Z, V^{\otimes p})$$ is described
classically (by Smith and by Steenrod) as in Lemma
\ref{lem:direct.sum.decomposition} obtained from isomorphisms of
$\F[\Z/p\Z]$-modules
$$V^{\otimes p} \to {V^{\otimes p}}(\mbox{fix}) \oplus V^{\otimes p}(\mbox{free}).$$

Let $B_\bu$ denote the classical minimal free resolution of $k$
regarded as a trivial $k[\Z/p\Z]$-module with basis for $B_i =
k[\Z/p\Z]$ given by $g_i$ in degree $i$. Next, consider the homology
of the chain complex  $$B_\bu \otimes_{k[\Z/p\Z]} <e_A>$$ for $A =
(\alpha, \cdots \alpha) \in \Delta^{\times p}(S)$. Thus $$e_A =
\overbrace{e_{\alpha}\otimes \cdots \otimes e_{\alpha}}^{\text{$p$
times}} = {e_{\alpha}}^{\otimes p}.$$

Furthermore, the class of $g_i \otimes (\lambda\cdot
e_{\alpha})^{\otimes p}$ is equal to the class of $\lambda^p g_i
\otimes (e_{\alpha})^{\otimes p}$ in the homology of $B_\bu
\otimes_{k[\Z/p\Z]} <e_A>$. Since the element
$\overline{Q}_i(\lambda x)$ is identified with the image of $g_i
\otimes (\lambda\cdot e_{\alpha})^{\otimes p}$ up to a scalar
multiple depending on $i$ and the degree of $x$ in the homology of
$Q(X)$, the lemma follows.
\end{proof}

Some remarks about graded vector spaces $V$ and the sign
representation are given in the next section.

\begin{rem}\label{remark:sign.rep}
The cyclic $k[\Z/p\Z]$-modules spanned by $e_{A}=
\overbrace{e_{\alpha} \otimes \cdots \otimes e_{\alpha}}^{\text{$p$
times}}$ denoted $<e_A>$ are all trivial $k[\Z/p\Z]$-modules where
each such module is concentrated in degree $p \cdot
\hbox{deg}(e_{\alpha})$, where $\hbox{deg}(e_{\alpha})$ denotes the
degree of the element $e_{\alpha}$.

The modules $<e_A>$ are not always trivial $k\Sigma_p$-modules as
the action depends on the degree of $e_{A}$. In particular, there
are isomorphisms of $k\Sigma_p$-modules given by
\[
 e_A =
\begin{cases}
k & \text{if $\hbox{deg}(e_{\alpha})$ is even, and}\\
\hbox{sgn} & \text{if $\hbox{deg}(e_{\alpha})$ is odd.}
\end{cases}
\]

The structure of the operations $Q_i(-)$ in case $k = \F_p$, and
$\overline{Q}_i(-)$ in case $k = \overline{\F}_p$ `record' this
structure, and thus give information about coefficients in the sign
representation. This trick has been used extensively in working out
the homology of certain mapping class groups with coefficients in
$\hbox{sgn}$.
\end{rem}

\section{Topological analogues}\label{sec:Topological analogues}
\subsection{}

The purpose of this section is to explain the relationship in more
detail between the homology of $Q(X)$ and the groups $ \oplus_{d
\geq 0}\HH_\bu(\Sigma_d, V^{\otimes d})$ where homology is taken
with coefficients in any field $k$. First, recall that if $V$ is a
connected graded vector space over $k$, then a choice of space $X$
was given in Section \ref{sec: On the homology of $Q(X)$} which has
reduced homology given by $V$: one choice of $X$ is a wedge of
spheres with reduced homology given by $V$. This choice of $X$ as a
wedge of spheres has a second feature which is addressed next.

Namely, the reduced homology groups of a wedge of spheres is a free
abelian group. Consider the case for which $V$ is obtained by
tensoring a free module $M$ over the integers $\Z$ with the field
$k$, namely $$ V = M \otimes_{\Z}k.$$ In this case $X$ may be chosen
to have the analogous property given by $M = \bar{H}_*(X,\Z)$ and
$$V = \bar{H}_*(X,\Z) \otimes_{\Z}k.$$

The point of view here is that, with mild conditions concerning the
graded vector space $V$, there is a topological space $X$ with the
property that the homology of the space $Q(X)$, a functor of $X$,
has homology given by $$ \oplus_{d \geq 0}\HH_\bu(\Sigma_d,
V^{\otimes d}).$$ This feature then provides a transparent way to
determine the $GL(V)$-action on $ \oplus_{d \geq 0}\HH_\bu(\Sigma_d,
V^{\otimes d}).$

To describe this isomorphism in more detail, it is useful from the
topological point of view to have base-points. Thus this section
will be restricted to path-connected, pointed CW-complexes $(X,*)$
where $*$ is the base-point of $X$. Maps are required to preserve
base-points denoted $*$. Furthermore, a wedge of pointed spaces has
a natural base-point. Thus the example of a wedge of spheres given
in Section \ref{sec: On the homology of $Q(X)$} suffices.

Next let $E\Sigma_d$ denote a contractible Hausdorff space which has
a free (right) action of $\Sigma_d$. Let $X^d$ denote the $d$-fold
product and $X^{(d)}$ the $d$-fold smash product given by $X^{(d)} =
X^d/S(X^d)$ where $S(X^d)$ denotes the subspace of $X^d$ given by
$$S(X^d) = \{(x_1, \cdots,x_d)| x_i = * \hbox{ for some} \ 1 \leq i
\leq d\}.$$ In addition, let $*$ also denote the class of the
base-point in $X^{(d)}$.

Consider the functor from ``pointed spaces" to ``pointed spaces"
which sends the space $X$ to the space given by $$(E\Sigma_d
\times_{\Sigma_d} X^{(d)})/(E\Sigma_d \times_{\Sigma_d}\{*\}).$$
This last construction is denoted $D_d(X)$ in what follows.

Some properties of the space $D_d(X)$ are listed next where it is
assumed that $X$ is a path-connected CW-complex with $V =
\overline{\HH}_\bu(X,k),$ the reduced homology groups of $X$ over
the field $k$.

\begin{enumerate}
\item The construction $D_d(X)$ gives a functor $D_d(-)$ with values
$D_d(X)$ for any pointed space $X$. Thus $D_d(-)$ is a functor from
pointed spaces to pointed spaces.
  \item The reduced (singular) homology of $X^{(d)}$ with any
  field coefficients $k$ is isomorphic to $V^{\otimes
  d}$ as a (left) $\Sigma_d$-module.
  \item The reduced homology of $D_d(X)$ is isomorphic to $\HH_\bu(\Sigma_d, V^{\otimes
  d}).$

\item Consider the functor from pointed spaces to pointed spaces
which sends the space $X$ to the space given by $Q(X)$. Then $Q(X)$
satisfies the property that there is an isomorphism of algebras
$$\HH_\bu(Q(X),\F) \to \oplus_{d \geq 0}\HH_\bu(\Sigma_d, V^{\otimes d}).$$
\end{enumerate}

\subsection{} A theorem originally due to Kahn \cite{k}, subsequently proven
by Snaith \cite{snaith} with extensions to related configuration
spaces as well as variations in \cite{cmt} can be stated as follows.

\begin{thm}\label{thm:astable.splitting}
Let $X$ denote a path-connected CW-complex. Then there is a natural
stable homotopy equivalence $$H\colon Q(X) \longrightarrow
\bigvee_{d \geq 0} D_d(X).$$

Thus there are isomorphisms  in homology, natural for pointed spaces
$(X,*)$, $$E_*(Q(X))\to E_*(\bigvee_{d \geq 0} D_d(X))$$ for any
homology theory $E_*(-)$. In particular, if $V = \overline
{\HH}_\bu(X,k) $ is regarded as a graded vector space over a field
$k$, there are isomorphisms $$\HH_\bu(Q(X),\F)\to \oplus_{d \geq
0}\HH_\bu(\Sigma_d, V^{\otimes d}).$$

\end{thm}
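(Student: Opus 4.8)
The plan is to reduce the homology statement to the stable splitting of $Q(X)$ and then to identify the homology of the stable wedge summands $D_d(X)$ as the terms $\HH_\bu(\Sigma_d, V^{\otimes d})$. First I would recall the construction of the stable map $H\colon Q(X) \to \bigvee_{d\geq 0} D_d(X)$. The essential input is Kahn's splitting \cite{k}, later given a clean proof by Snaith \cite{snaith}: one filters $Q(X) = \bigcup_m \Omega^m\Sigma^m X$ by the James-type (or May-Milgram) filtration, observes that the $d$-th filtration quotient of $C(X)$ (the free $E_\infty$-space, or the little-cubes approximation $CX \simeq QX$ for connected $X$) is precisely $D_d(X) = E\Sigma_d \ltimes_{\Sigma_d} X^{(d)}$, and then produces, after a single suspension, a stable retraction of each filtration stage onto the quotient. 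Assembling these retractions yields the stable equivalence $H$. I would cite this as known, and also cite \cite{cmt} for the variant with configuration spaces; for connected CW $X$ the approximation theorem of May identifies $C X$ with $QX$, so no extra hypotheses beyond path-connectedness are needed.

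Second, once $H$ is a stable homotopy equivalence, any generalized homology theory $E_*$ sends it to an isomorphism $E_*(Q(X)) \to E_*\bigl(\bigvee_{d\geq 0} D_d(X)\bigr)$, because stable equivalences are $E_*$-isomorphisms for every $E_*$ and $E_*$ commutes with (possibly infinite) wedges by the wedge axiom. This gives the second displayed isomorphism. Third, I would specialize to $E_* = \HH_\bu(-;k)$ singular homology with coefficients in the field $k$, so that $E_*(\bigvee_d D_d(X)) = \bigoplus_{d\geq 0} \widetilde{\HH}_\bu(D_d(X);k)$. It therefore remains to identify $\widetilde{\HH}_\bu(D_d(X);k) \cong \HH_\bu(\Sigma_d, V^{\otimes d})$ where $V = \overline{\HH}_\bu(X;k)$. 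This is exactly item (2)--(3) in the list preceding the theorem: by the Künneth theorem over the field $k$, $\widetilde{\HH}_\bu(X^{(d)};k) \cong V^{\otimes d}$ as a $k\Sigma_d$-module (the $\Sigma_d$-action being place permutation, with the usual Koszul signs on odd-degree classes), and then the homology of the Borel construction $E\Sigma_d \times_{\Sigma_d} X^{(d)}$ is computed by the equivariant homology / Leray--Serre spectral sequence of $X^{(d)} \to D_d(X) \to B\Sigma_d$, which over a field degenerates to give $\HH_\bu(\Sigma_d; \widetilde{\HH}_\bu(X^{(d)};k)) = \HH_\bu(\Sigma_d, V^{\otimes d})$.

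Finally I would note that the resulting isomorphism $\HH_\bu(Q(X);k) \to \bigoplus_{d\geq0}\HH_\bu(\Sigma_d, V^{\otimes d})$ is one of algebras: the product on the left is induced by the loop-sum (the $E_\infty$ structure on $Q(X)$), which under the filtration corresponds to the external products $D_{d_1}(X)\wedge D_{d_2}(X) \to D_{d_1+d_2}(X)$, and these in homology are exactly the transfer/induction products $\HH_\bu(\Sigma_{d_1}, V^{\otimes d_1}) \otimes \HH_\bu(\Sigma_{d_2}, V^{\otimes d_2}) \to \HH_\bu(\Sigma_{d_1+d_2}, V^{\otimes (d_1+d_2)})$ described in Section \ref{sec:explanation.of.the.connection}. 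The main obstacle in writing this carefully is not the homological bookkeeping but the construction of the stable splitting $H$ itself — establishing that each filtration quotient of $QX$ is stably a retract; I would handle this by quoting \cite{k, snaith, cmt} rather than reproving it, since it is the one genuinely nontrivial topological input and is entirely classical. A secondary subtlety worth a sentence is naturality in $(X,*)$: both $Q(-)$ and $D_d(-)$ are functors on pointed spaces, the filtration is natural, and the retractions can be chosen naturally, so $H$ is natural and hence so is the homology isomorphism.
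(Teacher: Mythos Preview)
Your proposal is correct and matches the paper's treatment: the paper does not prove this theorem but presents it as a classical result due to Kahn \cite{k} and Snaith \cite{snaith} (with a short proof in the appendix of \cite{c}), stating the identification $\widetilde{\HH}_\bu(D_d(X);k)\cong \HH_\bu(\Sigma_d,V^{\otimes d})$ as item (3) in the list preceding the theorem and then simply citing the stable splitting. Your outline---cite the splitting, deduce the $E_*$-isomorphism from the fact that stable equivalences induce isomorphisms in any homology theory, and identify the homology of $D_d(X)$ via the Borel construction and K\"unneth over a field---is exactly the intended reading; the only minor imprecision is calling the Borel identification a ``degeneration'' of the Serre spectral sequence (for $X$ a wedge of spheres one argues more directly via cellular chains with zero differentials), but this does not affect correctness.
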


\subsection{}Some remarks about where these results can be found are
listed next. An efficient computation of the homology of $Q(X)$ is
given in the proof of Theorem 4.2 on pages 40-47 of \cite{clm}. The
stable decomposition in Theorem \ref{thm:astable.splitting} is
proven in \cite{k,snaith}. A short two page
 proof is in the appendix of \cite{c}.

That the algebraic weights for the homology of $Q(X)$ agree with
those arising from the stable decompositions follows from the way in
which the operations are defined. One sketch is in \cite{clm}, pages
237-243.

Finally, a remark about notation: there are two different notations
for homology operations in use. The `lower notation' $Q_i(-)$ is
used above while `upper notation' $Q^s(-)$ is used in \cite{clm}.
The translation is as follows:
\begin{enumerate}
  \item $p = 2$: $Q_{s-q}(x) =Q^{s}(x)$ if $ s > q = \hbox{degree}(x)$.
  \item $p > 2$: $Q_{(2s-q)}(x) = c Q^s(x)$ for a nonzero scalar $c$ if $ 2s > q= \hbox{degree}(x)$,
  a choice of notation slightly different than that in \cite{clm}, foot of page 7.
\end{enumerate}

\section{A degree shift}\label{sec:degree.shift}

\subsection{}
In the work above in which the $GL(V)$-action on $\oplus_{d \geq
0}\HH_\bu(\Sigma_d, V^{\otimes d})$ was analyzed, it was
specifically assumed that $V$ is the reduced homology of a
path-connected topological space $X$, namely
$$V  = \bar{\HH}_\bu(X,k).$$ However, the reduced homology of any
path-connected space is concentrated in degrees greater than $0$.

Thus to address the structure of $\oplus_{d \geq 0}\HH_\bu(\Sigma_d,
W^{\otimes d})$ where $W$ is a graded vector space concentrated in
degree $0$, some technical modifications are required. This
modification is achieved through a formal degree shift which is
addressed in this section. One way to achieve this modification is
as follows.

\begin{defin} \label{defin:degree.shift}
Let $W$ be a vector space over $k$ concentrated in degree $0$. Given
a fixed natural number $n \in \mathbb N$ define a graded vector
space over $k$ denoted $$(n,W)$$ which is
\begin{enumerate}
  \item concentrated in degree $n$, namely $$(n,W) = \{0\}$$ in
  degrees not equal to $n$, and
  \item $(n,W)$ in degree $n$ is isomorphic to $W$.
\end{enumerate}
Thus there is a morphism of vector spaces (which does not respect
gradation) given by $$\sigma_n: W \to (n,W)$$ where
$$\sigma_n(x) = (n,x).$$
\end{defin}

Elementary features of $\sigma_n: W \to (n,W)$ are stated next.

\begin{lem} \label{lem:degree.shift}
The morphism of vector spaces (which does not respect gradation)
given by $$\sigma_n: W \to (n,W)$$ is an isomorphism of underlying
vector spaces. Furthermore, if the rank of $W$ is at least one, the
induced map given by the $d$-fold tensor product, $ d \geq 2$, of
$\sigma_n$,
$$(\sigma_n)^{\otimes d}: W^{\otimes d} \to (n,W)^{\otimes d}$$ is
an isomorphism of underlying modules over $k\Sigma_d$ provided $n$
is even.
\end{lem}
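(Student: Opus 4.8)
The plan is to reduce the second assertion to the Koszul sign rule governing the $\sd$-action on tensor powers of a graded vector space; the first assertion is essentially a tautology.

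The first assertion is immediate from Definition \ref{defin:degree.shift}: the graded vector space $(n,W)$ has exactly the same underlying vector space as $W$, merely placed in degree $n$ instead of degree $0$, and $\sigma_n$ is by construction the corresponding identification $x\mapsto (n,x)$, hence a $k$-linear isomorphism. For the second assertion I would first fix a basis $\{e_\alpha\}_{\alpha\in S}$ of $W$, note that $\{(n,e_\alpha)\}_{\alpha\in S}$ is then a basis of $(n,W)$, and observe that $(\sigma_n)^{\otimes d}$ is simply the relabeling of basis tensors $e_{\alpha_1}\otimes\cdots\otimes e_{\alpha_d}\mapsto (n,e_{\alpha_1})\otimes\cdots\otimes(n,e_{\alpha_d})$; in particular it is a $k$-linear isomorphism of underlying vector spaces regardless of the parity of $n$. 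Everything thus comes down to checking that this map intertwines the two $k\sd$-module structures.

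Next I would make the relevant action explicit. On a $d$-fold tensor power $U^{\otimes d}$ of a graded vector space $U$, the symmetric group acts through the symmetric monoidal braiding, so that on homogeneous elements the adjacent transposition $(i,i+1)$ sends $u_1\otimes\cdots\otimes u_d$ to $(-1)^{\operatorname{deg}(u_i)\operatorname{deg}(u_{i+1})}\,u_1\otimes\cdots\otimes u_{i+1}\otimes u_i\otimes\cdots\otimes u_d$; this is the convention under which, for instance, Remark \ref{remark:sign.rep} records $\langle e_A\rangle$ as trivial or as $\hbox{sgn}$ according to the parity of $\operatorname{deg}(e_\alpha)$. Since adjacent transpositions generate $\sd$, it suffices to compare the sign factors they produce on $W^{\otimes d}$ and on $(n,W)^{\otimes d}$. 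For $W$ concentrated in degree $0$ every homogeneous factor has degree $0$, every braiding sign is $+1$, and the action on $W^{\otimes d}$ is the ordinary place-permutation action. For $(n,W)$ with $n$ even, every homogeneous factor has degree $n$, so each adjacent transposition contributes $(-1)^{n^2}=(-1)^n=+1$, and the action on $(n,W)^{\otimes d}$ is again ordinary place permutation. Under the basis relabeling $(\sigma_n)^{\otimes d}$ these two place-permutation actions visibly correspond, so $(\sigma_n)^{\otimes d}$ is an isomorphism of $k\sd$-modules.

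The only genuine content is the one-line parity computation $(-1)^{n^2}=+1$, and the hypothesis that $n$ be even is exactly what makes it go through: for $n$ odd each adjacent transposition acts on $(n,W)^{\otimes d}$ with an extra sign $-1$, so $(n,W)^{\otimes d}\cong W^{\otimes d}\otimes\hbox{sgn}$ as $k\sd$-modules, which is not isomorphic to $W^{\otimes d}$ as soon as $\operatorname{rank}(W)\ge 1$ and $d\ge 2$ — this is precisely why the rank hypothesis is recorded (if $W=0$ both sides vanish and there is nothing to prove, in any parity). So the main thing to be careful about is not a calculation but a bookkeeping point: pinning down which $\sd$-action is meant on the graded tensor power. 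Once the Koszul convention is fixed, the rest of the argument is routine.
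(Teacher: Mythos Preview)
Your proof is correct. The paper does not actually supply a proof of this lemma: it is introduced as ``elementary features of $\sigma_n$'' and stated without argument, with Remark~\ref{rem:degree.shift} immediately afterward explaining why the odd-$n$ case fails via the sign representation. Your approach through the Koszul sign rule on graded tensor powers is exactly the mechanism implicit in that remark, so your argument is the natural one and aligns with what the authors had in mind. One small point: the rank hypothesis is not actually needed for the positive assertion (for $n$ even the map is a $k\Sigma_d$-isomorphism even when $W=0$, trivially); as you correctly surmise, its role is only to make the contrast with odd $n$ nonvacuous, which is how the paper uses it in the subsequent remark.
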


\begin{rem} \label{rem:degree.shift}
If $n$ is odd, and $d \geq 2$, then the map of vector spaces
$$(\sigma_n)^{\otimes d}: W^{\otimes d} \to
(n,W)^{\otimes d}$$ is not an isomorphism of underlying modules over
$k\Sigma_d$. In particular if $n$ is odd and $W$ is of rank one,
then
\begin{enumerate}
\item $W^{\otimes 2}$ is a vector space of rank one which is the
trivial representation of $\Sigma_2$, and
\item $(n,W)^{\otimes 2}$ is still a vector space of rank one, but is the
sign representation of $\Sigma_2$.
\end{enumerate}

Furthermore, if $n$ is even, the map
$$(\sigma_n)^{\otimes d}: W^{\otimes d} \to
(n,W)^{\otimes d}$$ is both an isomorphism of vector spaces and of
modules over $k\Sigma_d$. The most economical way to achieve this
is by setting $n = 2 $ as is done below.
\end{rem}

Definition \ref{defin:degree.shift} provides a natural degree shift
for the isomorphism given in Sections \ref{sec:The.prime.two} and
\ref{sec:Odd.primes} above. This will ensure the degrees match with
the natural degrees in $\Hs Y^\lambda)$. Namely, start with a graded
vector space $W$ concentrated in degree $0$.

Then $$(\sigma_{2n})^{\otimes d}: W^{\otimes d} \to (2n,W)^{\otimes
d}$$ induces an isomorphism of underlying $\Sigma_d$-modules ( which
does not preserve degrees ). Thus there is an induced natural shift
map on the level of homology which gives isomorphisms
$$\Theta_d(2n):\HH_s(\Sigma_d, W^{\otimes d}) \to
\HH_{s+2nd}(\Sigma_d, (2n,W)^{\otimes d})$$ as well as isomorphisms
$$\oplus_{d \geq 0}\Theta_d(2n): \oplus_{d \geq 0}\HH_\bu(\Sigma_d, W^{\otimes d}) \to \oplus_{d \geq 0} \HH_{\bu+2nd}(\Sigma_d,
(2n,W)^{\otimes d}).$$

This all is described by the following theorem, which uses Lemma
\ref{lem:scalars} with $k = \overline{\F}_p$.

\begin{thm}\label{thm:shiftedfinalanswer for p=2 and p oddL}

Let $W$ be a graded vector space concentrated in degree $0$ with
basis $\{w_{\gamma}|\gamma \in S\}$ over $\overline{\F}_p$. Then
$$\oplus_{d \geq 0}\HH_\bu(\Sigma_d, W^{\otimes d})$$ is a
graded,commutative algebra given as follows.

\begin{enumerate}
  \item[(a)] If $ p = 2$, then $\oplus_{d \geq 0}\HH_\bu(\Sigma_d, W^{\otimes d})$
is isomorphic to the polynomial algebra
$S[\overline{Q}_I(w_{\gamma})]$ with generators
$\overline{Q}_I(w_{\gamma})$, $I = (i_1,i_2,\cdots, i_t)$, $0 < i_1
\leq i_2 \leq \cdots \leq i_t < \infty$ where the degree of
$\overline{Q}_I(w_{\gamma})$ is $$i_1 + 2i_2 + 4i_3 + \cdots +
2^{i_t - 1}i_t.$$

\item[(b)] If $ p > 2$, then $\oplus_{d \geq 0}\HH_\bu(\Sigma_d, W^{\otimes d})$
is isomorphic to the symmetric algebra $$
S[\overline{Q}_J(w_{\gamma})|\hbox{deg}(\overline{Q}_J(w_{\gamma}))\equiv
0(\text{mod }2)] \otimes
E[\overline{Q}_J(w_{\gamma})|\hbox{deg}((\overline{Q}_J(w_{\gamma}))\equiv
1(\text{mod }2)]$$ for which $J =
(\epsilon_1,j_1,\epsilon_2,j_2,\cdots, \epsilon_t,j_t)$ with $0 <
j_1 \leq j_2 \leq \cdots \leq j_t < \infty, $ $\epsilon_k = 0,1$
with $1 \leq k \leq t$,

    \begin{enumerate}
    \item[(i)]  $S[\overline{Q}_J(w_{\gamma})]$ denotes the polynomial algebra with
    generators $\overline{Q}_J(w_{\gamma})$ (which may
    contain Bocksteins but must start with $\overline{Q}_{j_t}(w_{\gamma})$ and is not allowed
    to start with $\beta(w_{\gamma})$),
    \item[(ii)] $E[\overline{Q}_J(w_{\gamma})]$ denotes the exterior algebra with
    generators $\overline{Q}_J(w_{\gamma})$ ( which may
    contain Bocksteins but must start with $\overline{Q}_{j_t}(w_{\gamma})$ and is not allowed
    to start with $\beta(w_{\gamma}))$, and
    \item[(iii)] the degree of $\overline{Q}_J(w_{\gamma})$ for $J = (\epsilon_1,j_1,\epsilon_2,j_2,\cdots,
\epsilon_t,j_t)$  is equal to
$$(-\epsilon_1 + j_1(p-1))+p(-\epsilon_2 + j_2(p-1))) +  \cdots + p^{t-1}(-\epsilon_t +
j_t(p-1))).$$
    \end{enumerate}

\end{enumerate}

The action of $GL(W)$ on
$$\bigoplus_{d \geq 0}\HH_\bu(\Sigma_d, W^{\otimes
d})$$ is given in terms of the formulas in Section
\ref{sec:The.prime.two} in case $p = 2$ and Section
\ref{sec:Odd.primes} in case $p$ is odd for which
$$\overline{Q}_i(\lambda x) = \lambda^p \overline{Q}_i(x)$$ with $\lambda
\in \overline{\F}_p$.

\end{thm}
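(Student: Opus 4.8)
\textbf{Proof strategy for Theorem \ref{thm:shiftedfinalanswer for p=2 and p oddL}.}
The plan is to deduce everything formally from the machinery already assembled in Sections \ref{sec: On the homology of $Q(X)$}--\ref{sec:Topological analogues}, treating Theorem \ref{thm:shiftedfinalanswer for p=2 and p oddL} as the ``degree-shifted'' repackaging of Theorems \ref{thm:araki.kudo.dyer.lashof} and \ref{thm:homology.splitting} together with the explicit algebra descriptions recorded in Sections \ref{sec:The.prime.two} and \ref{sec:Odd.primes}. First I would start with the graded vector space $W$ concentrated in degree $0$, and replace it by $(2,W)$, the shift of $W$ into degree $2$, which is connected. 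Since $2$ is even, Lemma \ref{lem:degree.shift} gives an isomorphism of $k\Sigma_d$-modules $W^{\otimes d}\cong (2,W)^{\otimes d}$ for every $d$, hence the shift isomorphisms $\Theta_d(2)\colon \HH_s(\Sigma_d,W^{\otimes d})\to \HH_{s+2d}(\Sigma_d,(2,W)^{\otimes d})$, and these assemble into an isomorphism $\oplus_d \HH_\bu(\Sigma_d,W^{\otimes d})\to \oplus_d\HH_{\bu+2d}(\Sigma_d,(2,W)^{\otimes d})$. Because the module isomorphism is $GL(W)$-equivariant (the $GL(W)$-action is on the coefficient space and commutes with the place permutation), the shift is an isomorphism of $GL(W)$-algebras up to the recorded regrading.

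Next I would invoke the wedge-of-spheres realization: choose $X=\bigvee_{\gamma\in S}S^2$, so that $\overline{\HH}_\bu(X,\overline{\F}_p)\cong (2,W)$ with basis $\{w_\gamma\}$ placed in degree $2$. Theorem \ref{thm:homology.splitting} (for $p$ odd, its analogue in Section \ref{sec:Odd.primes}; for $p=2$, Section \ref{sec:The.prime.two}) then identifies $\oplus_d\HH_\bu(\Sigma_d,(2,W)^{\otimes d})$ with $\HH_\bu(Q(X),\overline{\F}_p)$, and the explicit computations in those sections give this as the polynomial algebra $S[\overline{Q}_I(w_\gamma)]$ when $p=2$, and as $S[\cdots]\otimes E[\cdots]$ on admissible Dyer--Lashof--Bockstein monomials when $p>2$. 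The admissibility conditions stated in parts (i), (ii) of the theorem (must start with $\overline{Q}_{j_t}$, not with $\beta$ applied directly to $w_\gamma$) are exactly the standard conditions from \cite{clm} for the monomials to be a basis; I would cite that. The $GL(W)$-action statement is immediate from Theorem \ref{thm:araki.kudo.dyer.lashof.operations.over.overline.F}, in particular the Frobenius-twisted scalar formula $\overline{Q}_i(\lambda x)=\lambda^p\overline{Q}_i(x)$ proved in Lemma \ref{lem:scalars}, combined with additivity.

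Finally I would carry out the degree bookkeeping, which is where the two slightly different-looking exponents in parts (a) and (b) come from. Before shifting, in $\HH_\bu(Q(X),\overline{\F}_p)$ with $X=\bigvee S^2$ the generator $w_\gamma$ sits in degree $2$, and each application of $\overline{Q}_i$ multiplies the degree by $p$ and adds $i(p-1)$ (respectively, for $p=2$: doubles and adds $i$), while a Bockstein subtracts $1$; iterating from the inside out over $I=(i_1,\dots,i_t)$ or $J=(\epsilon_1,j_1,\dots,\epsilon_t,j_t)$ gives the total degree in $Q(X)$, and then the shift $\Theta$ subtracts $2\cdot w(\overline{Q}_I(w_\gamma))=2p^t$ (resp.\ $2\cdot 2^t$) to land in $\HH_\bu(\Sigma_d,W^{\otimes d})$ with $d=w$. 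I would check that after this subtraction the $p=2$ degree collapses to $i_1+2i_2+\cdots+2^{t-1}i_t$ and the $p>2$ degree to $\sum_{k=1}^t p^{k-1}(j_k(p-1)-\epsilon_k)$, matching (a) and (iii). The main obstacle is not conceptual but precisely this arithmetic reconciliation: one must be careful that the weight-to-$d$ correspondence, the ``internal'' topological degree in $Q(X)$, and the shifted degree all line up, and in the odd case that the Bockstein contributions and the admissibility constraints are inserted at the right place in the iteration; once the inductive degree formula $\deg(\overline{Q}_i x)=p\deg(x)+i(p-1)-(\text{Bockstein correction})$ is set up correctly, the rest is a routine telescoping computation.
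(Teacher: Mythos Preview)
Your proposal is correct and follows essentially the same route as the paper. The paper does not isolate a formal proof environment for this theorem; it presents the statement as the summary of the degree-shift machinery set up earlier in Section~\ref{sec:degree.shift} (Definition~\ref{defin:degree.shift}, Lemma~\ref{lem:degree.shift}, and the shift isomorphisms $\Theta_d(2n)$) combined with the explicit descriptions of $\HH_\bu(Q(X),\overline{\F}_p)$ from Sections~\ref{sec:The.prime.two} and~\ref{sec:Odd.primes} and the scalar formula from Lemma~\ref{lem:scalars}---exactly the ingredients you assemble, in the same order, with the same choice $X=\bigvee_{\gamma}S^2$ and the same telescoping degree computation.
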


\subsection{} Let $k = \overline{\F}_p$. The description of the action of $GL(W)$
from Sections \ref{sec:The.prime.two} and \ref{sec:Odd.primes},
shows that the action on monomials of the form $\bar{Q}_i(w)$ or
$\bar{Q}_{\epsilon_1,i_1}(w)$ is through a Frobenius twist. That is,
this action gives $GL(W)$-modules which are isomorphic to $W^{(1)}$.
A monomial $\overline{Q}_{i_1,i_2}(w)$ would correspond to the
$GL(W)$-module $W^{(2)}$. Monomials
$\overline{Q}_i(w)\overline{Q}_i(w)$ would give a $S^2(W^{(1)})$. In
odd characteristic, the  square of a monomial in the exterior
algebra part would give zero, so we obtain modules of the form
$\Lambda^c(W^{(c)})$ but no symmetric powers of such modules. Thus
Theorem \ref{thm:shiftedfinalanswer for p=2 and p oddL} implies the
following.
\begin{cor}\label{corgivingstructure} Let $k = \overline{\F}_p$, and let $W$
denote a vector space over $k$ of rank $n$
concentrated in degree zero, so $GL(W)=GL_n(k)$. In characteristic
two, the $GL_n(k)$-module $\HH_\bu(\Sigma_d, W^{\otimes d})$ is a
direct sum of modules of the form:

\begin{equation}
\label{eq: descriptionofsummandsinchar2}
S^{a_1}(W) \otimes S^{a_2}(W^{(c_2)}) \otimes \cdots \otimes S^{a_s}(W^{(c_s)})
\end{equation}
where each $a_i \geq 0$, $c_i>0$ and  $d=a_1 + \sum_{j=2}^s a_j2^{c_j}.$

In odd characteristic the $GL_n(k)$-module $\HH_\bu(\Sigma_d,
W^{\otimes d})$ is a direct sum of modules of the form
\begin{equation}
\label{eq: descriptionofsummandsincharodd}
S^{a_1}(W) \otimes S^{a_2}(W^{(c_2)}) \otimes \cdots \otimes S^{a_s}(W^{(c_s)})\otimes \Lambda^{d_2}(W^{(d_2)})
\otimes \cdots \otimes \Lambda^{d_t}(W^{(d_t)})
\end{equation} where each $a_i
\geq 0$, each $c_i, d_i>0$ and where
$d=a_1 + \sum_{j=2}^s a_jp^{c_j}+ \sum_{j=2}^td_jp^{d_j}.$
\end{cor}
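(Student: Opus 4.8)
The plan is to read the decomposition off Theorem~\ref{thm:shiftedfinalanswer for p=2 and p oddL} once the $GL(W)$-action on the algebra generators has been recorded. Fix $k=\overline{\F}_p$ and let $W$ have basis $\{w_\gamma\}_{\gamma\in S}$ with $|S|=n$. By Theorem~\ref{thm:shiftedfinalanswer for p=2 and p oddL}, $\bigoplus_{d\geq 0}\HH_\bu(\Sigma_d,W^{\otimes d})$ is the polynomial algebra $S[\overline{Q}_I(w_\gamma)]$ when $p=2$, and when $p$ is odd it is the tensor product of the polynomial algebra on the even-degree generators $\overline{Q}_J(w_\gamma)$ with the exterior algebra on the odd-degree ones. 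This algebra is bigraded by homological degree and by weight $d$, and $GL(W)$ acts on it by algebra automorphisms preserving both gradings (it acts on the algebra $\HH_\bu(Q(X),k)$). So the first step is to identify, for each ``type'' of generator, the $GL(W)$-module spanned by the generators of that type; the second step is to reassemble the weight-$d$ component as the tensor product of the contributions of the finitely many types that can occur in weight $d$.

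For the first step, I would fix a sequence $I$ (respectively $J$) containing exactly $t$ of the operations $\overline{Q}$ and put $U_I=\operatorname{span}_k\{\overline{Q}_I(w_\gamma):\gamma\in S\}$. Since $\beta$ and the $\overline{Q}_i$ are additive and $\overline{Q}_I(\lambda x)=\lambda^{p^t}\overline{Q}_I(x)$ by Section~\ref{sec:The.prime.two} (respectively Section~\ref{sec:Odd.primes}), for $g\in GL(W)$ with $gw_\gamma=\sum_\delta a_{\delta\gamma}w_\delta$ one computes
$$g\cdot\overline{Q}_I(w_\gamma)=\overline{Q}_I\Bigl(\sum_\delta a_{\delta\gamma}w_\delta\Bigr)=\sum_\delta a_{\delta\gamma}^{\,p^t}\,\overline{Q}_I(w_\delta),$$
so that $w_\gamma\mapsto\overline{Q}_I(w_\gamma)$ is an isomorphism of $GL(W)$-modules $W^{(t)}\cong U_I$, where $W^{(t)}$ is the $t$-fold Frobenius twist and, crucially, the Bocksteins in $J$ do not affect $t$. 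Each such generator has weight $p^t$ ($2^t$ when $p=2$), so the subalgebra generated by $U_I$ is $S(W^{(t)})$ when $\deg\overline{Q}_I(w_\gamma)$ is even --- including $t=0$, where the generators are the $w_\gamma$ and the subalgebra is $S(W)$ --- and $\Lambda(W^{(t)})$ when the degree is odd, with weight-$ap^t$ component $S^a(W^{(t)})$, respectively $\Lambda^a(W^{(t)})$, in the appropriate homological degree.

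For the second step, I would write the whole algebra as the tensor product over all types of these sub-(symmetric or exterior) algebras. In weight $d$ only finitely many types can contribute, so $\HH_\bu(\Sigma_d,W^{\otimes d})$ becomes the direct sum, over all choices of nonnegative integers $a_I$ with $\sum_I a_Ip^{t_I}=d$ and only finitely many nonzero, of the tensor products $\bigotimes_I (S^{a_I}\text{ or }\Lambda^{a_I})(W^{(t_I)})$. Collecting the unique $t=0$ contribution into one factor $S^{a_1}(W)$ and relabelling yields \eqref{eq: descriptionofsummandsinchar2} for $p=2$ (no exterior factors, with $p^{c_j}$ read as $2^{c_j}$) and \eqref{eq: descriptionofsummandsincharodd} for $p$ odd; the weight constraint $d=a_1+\sum a_jp^{c_j}+\sum d_jp^{d_j}$ is exactly $\sum_I a_Ip^{t_I}=d$, separated into the even- and odd-degree types.

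The one genuinely substantive point, and the step I expect to be the main obstacle, is the identification $U_I\cong W^{(t)}$: matching the $p$-power $p^t$ in the scalar behaviour of the Dyer-Lashof operations (Theorem~\ref{thm:araki.kudo.dyer.lashof.operations.over.overline.F}) with the $t$-fold Frobenius twist, and checking that the Bocksteins add nothing to this exponent so that the twist degree is the number of Dyer-Lashof operations rather than the full length of $J$. I would also take a moment in odd characteristic to see that no symmetric powers of exterior powers occur: this is forced, since a product of generators from an exterior tensor-factor vanishes as soon as one generator is repeated, so each exterior factor contributes only a single $\Lambda^{a_I}(W^{(t_I)})$. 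Everything else --- that a polynomial (respectively exterior) algebra on a graded module splits in each weight as the corresponding symmetric (respectively exterior) powers, and that these combine via tensor products because $GL(W)$ acts by algebra automorphisms --- is routine.
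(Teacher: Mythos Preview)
Your proposal is correct and follows essentially the same approach as the paper: the paper's argument is the short paragraph immediately preceding the corollary, which identifies the span of generators $\overline{Q}_I(w_\gamma)$ (for fixed $I$ of length $t$) with $W^{(t)}$ via the scalar formula $\overline{Q}_I(\lambda x)=\lambda^{p^t}\overline{Q}_I(x)$, observes that products in the polynomial part give symmetric powers, and notes that squares vanish in the exterior part so only exterior powers appear there. Your write-up is simply a more careful and explicit version of the same idea; in particular, your observation that the Bocksteins do not raise the Frobenius exponent (so the twist is the number of $\overline{Q}$'s in $J$, not the full length of $J$) is a detail the paper leaves implicit.
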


\begin{rem}
\label{rem: cor821remark} Corollary \ref{corgivingstructure} cannot be used alone to
determine $\HH_i(\Sigma_d, W^{\otimes d})$ as a $GL(W)$-module, additional details on degrees of
monomials in Theorem \ref{thm:shiftedfinalanswer for p=2 and p oddL} are needed. To compute $\HH_i(\Sigma_d, W^{\otimes d})$, one needs to determine all monomials inside $\oplus_{d \geq 0}\HH_\bu(\Sigma_d, W^{\otimes d})$ which lie in degree $i$ and have weight $d$. To each such monomial, one obtains a corresponding $GL_n(k)$-module of the form described in Corollary \ref{corgivingstructure}. In several of the applications below it is enough to know that each summand in \eqref{eq: descriptionofsummandsinchar2} and \eqref{eq: descriptionofsummandsincharodd} is of the form $S^a(W) \otimes M^{(1)}$.
\end{rem}

\begin{rem} \label{rem:tensortwists} We remark that as a $GL_{n}(k)$-module, $[V^{(b)}]^{\otimes d} \cong [V^{\otimes d}]^{(b)}$
because if $F^{b}:GL_{n}(k)\rightarrow GL_{n}(k)$ is the $b$th iteration of the Frobenius map then the action of
$G$ on $[V^{(b)}]^{\otimes d}$ is given by
\begin{equation*}
g.(v_{1}\otimes \dots \otimes v_{d})=(g.v_{1}\otimes \dots \otimes g.v_{d})
=(F^{b}(g)v_{1} \otimes \dots \otimes F^{b}(g)v_{d})=F^{b}(g)(v_{1}\otimes \dots \otimes v_{d}),
\end{equation*}
which coincides with the action of $GL_{n}(k)$ on $[V^{\otimes d}]^{(b)}$. This argument also shows that one has
natural $GL_{n}(k)$-module isomorphisms between $S^{d}(V^{(b)})\cong S^{d}(V)^{(b)}$ and
$\Lambda^{d}(V^{(b)})\cong \Lambda^{d}(V)^{(b)}$.
\end{rem}

\section{Cohomology between Young modules}

\label{sec: cohomology between Young modules}
\subsection{}
The remainder of the paper consists of applications of Theorem
\ref{thm:shiftedfinalanswer for p=2 and p oddL} and Corollary
\ref{corgivingstructure} to the representation theory of the
symmetric group. Since any $n \geq d$ gives the same results, we
will assume henceforth that $n=d$. Further, we henceforth always let $V$ denote the natural $GL_d(k)$ module, as we will be applying Theorem \ref{thm:shiftedfinalanswer for p=2 and p oddL} and Corollary \ref{corgivingstructure} but will have no need of the more general results where $V$ is not concentrated in degree zero. Theorem \ref{thm:multl of simples
in Ljgivescohomology}(b) will allow us to compute $\Hs Y^\lambda)$
from our description of $\HH_\bu(\Sigma_d, \vt)$. However much more
information can be obtained from this calculation.

In this section, we observe that with additional knowledge about
decomposition numbers for the Schur algebra $S(d,d)$  one can
compute $\Ext^\bu_{\Sigma_d}(Y^\lambda, Y^\mu)$ for arbitrary
partitions $\lambda, \mu \vdash d$. The results in this section do
not use the topological information from the previous sections.

\subsection{}
\label{subsect: tensorprodcutyoungmodules} The decomposition numbers
for $S(d,d)$ are precisely the multiplicities $[V(\lambda):L(\mu)]$
for $\lambda,\mu\vdash d$ where $V(\lambda)$ is the Weyl module of
highest weight $\lambda$. We will first explain the connection
between the decomposition numbers for $S(d,d)$ and homomorphisms
between Young modules.

\begin{prop}
\label{prop:dec/homspaces} Knowing the decomposition matrix for the
Schur algebra $S(d,d)$ is equivalent to knowing $\dim \operatorname
{Hom}_{\Sigma_{d}}(Y^{\lambda},Y^{\mu})$ for all $\lambda, \mu
\vdash d$.
\end{prop}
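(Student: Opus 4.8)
The plan is to establish the equivalence in two directions, relating $\operatorname{Hom}_{\Sigma_d}(Y^\lambda,Y^\mu)$ to the decomposition numbers $d_{\lambda\mu} = [V(\lambda):L(\mu)]$ of $S(d,d)$ via the inverse Schur functor $\G = \G_{\Hom} = R^0\G_{\Hom}$. The key identity is that $\operatorname{Hom}_{\Sigma_d}(Y^\lambda,Y^\mu) \cong \operatorname{Hom}_{S(d,d)}(I(\lambda),I(\mu))$, since the Schur functor $\mathcal F$ induces an isomorphism on Hom-spaces between injective (equivalently, between Young) modules: $\mathcal F$ is exact with right adjoint $\G_{\Hom}$, $\mathcal F(I(\lambda)) = Y^\lambda$, and $\G_{\Hom}(Y^\mu) = I(\mu)$ (this last identity, together with $\mathcal F \G_{\Hom} \cong \mathrm{id}$ on the relevant subcategory, is standard and can be cited from \cite{DEN} or \cite{GreenPolynoGLn}). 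So the problem reduces to a statement purely inside $\mo S(d,d)$.

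Next I would express $\dim \operatorname{Hom}_{S(d,d)}(I(\lambda),I(\mu))$ in terms of decomposition numbers. Writing everything in the basis of simples, $\dim\operatorname{Hom}_{S(d,d)}(I(\lambda),I(\mu))$ counts, by injectivity of $I(\mu)$, the multiplicity $[I(\lambda):L(\mu)] = [\operatorname{soc} \text{-} \text{filtration counting}]$; more precisely, since $I(\mu)$ is the injective hull of $L(\mu)$, $\dim\operatorname{Hom}(M,I(\mu)) = [M:L(\mu)]$ for any finite-dimensional $M$, so $\dim\operatorname{Hom}_{S(d,d)}(I(\lambda),I(\mu)) = [I(\lambda):L(\mu)]$. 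Now one uses that $S(d,d)$ is a quasi-hereditary algebra with a good (Weyl) filtration on its injectives: $[I(\lambda):L(\mu)] = \sum_{\nu} (I(\lambda):V(\nu))\,[V(\nu):L(\mu)]$, and by Brauer–Humphreys reciprocity $(I(\lambda):V(\nu)) = [V(\nu):L(\lambda)] = d_{\nu\lambda}$. Hence $\dim\operatorname{Hom}_{\Sigma_d}(Y^\lambda,Y^\mu) = \sum_\nu d_{\nu\lambda} d_{\nu\mu} = (D^{\mathrm{tr}} D)_{\lambda\mu}$ where $D = (d_{\nu\lambda})$ is the decomposition matrix. This gives one direction immediately: from $D$ one computes all these Hom-dimensions.

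For the converse, I would observe that the matrix $D$ is, after suitably ordering partitions by dominance, lower unitriangular (for $\lambda$ $p$-regular indexing the simples; in the Schur algebra formulation $D$ is square with $d_{\lambda\lambda}=1$ and $d_{\lambda\mu}=0$ unless $\mu \trianglelefteq \lambda$). Therefore $D^{\mathrm{tr}}D$ is a symmetric matrix from which $D$ can be recovered by an inductive (Gram–Schmidt / Cholesky-type) procedure along the dominance order: the diagonal entries and unitriangular shape pin down each column of $D$ successively from the entries of $D^{\mathrm{tr}}D$. Thus knowing all $\dim\operatorname{Hom}_{\Sigma_d}(Y^\lambda,Y^\mu)$ determines $D$, completing the equivalence. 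The main obstacle is handling the bookkeeping in this recovery step carefully — i.e. checking that the triangularity really does make the Cholesky-type reconstruction unique and well-defined over the relevant index set — together with being precise about which set of partitions labels the simples versus the Young modules (regular versus all partitions) so that the matrices in play are genuinely square and unitriangular; once that combinatorial setup is fixed, the reconstruction is routine linear algebra.
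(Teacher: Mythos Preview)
Your proposal is correct and follows essentially the same route as the paper: identify $\dim\Hom_{\Sigma_d}(Y^\lambda,Y^\mu)$ with a Cartan number for $S(d,d)$, express the Cartan matrix as $D^{\mathrm{tr}}D$, and then recover $D$ from $D^{\mathrm{tr}}D$ using unitriangularity. The paper phrases the last step as an explicit reverse induction on the dominance order rather than invoking a Cholesky-type factorisation, but these are the same argument. One small discrepancy: the paper (citing \cite{DPS}) records $\mathcal G_{\Hom}(Y^\mu)\cong P(\mu)$ rather than $I(\mu)$, and accordingly works with $\Hom_{S(d,d)}(P(\lambda),P(\mu))$; since the $\tau$-duality interchanges $P(\mu)$ and $I(\mu)$ and the Cartan matrix $D^{\mathrm{tr}}D$ is symmetric, this does not affect your conclusion, but you should cite the correct identification.
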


\begin{proof} Recall from \cite[7.1]{DPS} that ${\mathcal G}_{\text{Hom}}(Y^{\mu})=P(\mu)$, where $P(\mu)$ is the indecomposable projective cover of $L(\lambda)$ in the category of $S(d,d)$-modules. This implies that
$$\operatorname{Hom}_{\Sigma_{d}}(Y^{\lambda},Y^{\mu})\cong
\operatorname{Hom}_{S(d,d)}(P(\lambda),P(\mu)).$$ for all $\lambda,
\mu \vdash d$. The dimensions of the latter $\text{Hom}$-spaces give
the Cartan matrix for $S(d,d)$ (i.e., multiplicities of composition
factors in projective indecomposable modules). It is well known that
the decomposition matrix times its transpose gives the Cartan matrix
for $S(d,d)$. Therefore, from the decomposition matrix we can get
the dimensions of the homomorphisms between Young modules.

It remains to prove that from the Cartan matrix for $S(d,d)$ one can
deduce the decomposition numbers. This will follow by using
reverse induction on the ordering on the weights, which in this setting can
be taken as just the lexicographic order on partitions of $d$. Note
for a maximal weight $\lambda$, $P(\lambda)=V(\lambda)$ and
$$[V(\lambda):L(\mu)]=[P(\lambda):L(\mu)]$$
is known for all $\mu\vdash d$. In general, $P(\lambda)$ has a
filtration by Weyl modules with one copy of $V(\lambda)$ on top, and
successive subquotients of the form $V(\mu)$ for $\mu > \lambda$. We
let $[P(\lambda) : V(\mu)]$ denote the multiplicity of $V(\mu)$ in
any such filtration, which is well-defined by \cite[4.19]{Jantzen}.

Now assume that $[V(\sigma):L(\mu)]$ is known for all
$\sigma>\lambda$, $\mu \vdash d$. We want to be able to deduce
$[V(\lambda):L(\mu)]$ for $\mu \vdash d$. Observe that
\begin{eqnarray*}
[P(\lambda):L(\mu)]&=&\sum_{\sigma\vdash d}[P(\lambda):V(\sigma)][V(\sigma):L(\mu)]\\
&=&[P(\lambda):V(\lambda)][V(\lambda):L(\mu)]+
\sum_{\sigma>\lambda}[P(\lambda):V(\sigma)][V(\sigma):L(\mu)]\\
&=&[V(\lambda):L(\mu)]+\sum_{\sigma>\lambda}[V(\sigma):L(\lambda)][V(\sigma):L(\mu)].
\end{eqnarray*}
The last equality follows by using the reciprocity law
\cite[4.5]{Martinbook}:
$[P(\lambda):V(\sigma)]=[V(\sigma):L(\lambda)]$, and
$[P(\lambda):V(\lambda)]=[V(\lambda):L(\lambda)]=1$. It follows that
from the equation above that $[V(\lambda):L(\mu)]$ can be computed
using the induction hypothesis and the Cartan matrix.
\end{proof}

\subsection{} According to Proposition~\ref{prop:dec/homspaces}, knowing the dimensions of
Hom-spaces between Young modules for $\Sigma_d$ is equivalent to
knowing the decomposition numbers for $S(d,d)$. We will now
demonstrate a striking  result: it is enough to know
$\text{Hom}_{\Sigma_{t}}(Y^{\rho},Y^{\tau})$ for all $\rho, \tau
\vdash t \leq d$ (or equivalently the decomposition numbers for all
$S(t,t)$, $t \leq d$) in order to compute
$\text{Ext}^{\bullet}_{\Sigma_{d}}(Y^{\lambda},Y^{\mu})$ for all
$\lambda, \mu \vdash d$. This relies heavily on our computation of
$\HH_n(\Sigma_{d}, V^{\otimes d})$ as a $GL_{d}$-module.

It is an easy consequence of Mackey's theorem that the tensor
product of two Young modules is a direct sum of Young modules. The direct sum decomposition can be determined from the decomposition matrix of $S(d,d)$:

\begin{prop}
\label{prop: computingtensorofYoungmodules} Suppose the
decomposition matrix for the Schur algebra $S(d,d)$ is known. Then
for $\lambda, \mu \vdash d$, one can compute the decomposition of
$Y^\lambda \otimes Y^\mu$ into Young modules.
\end{prop}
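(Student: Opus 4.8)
The plan is to reduce the computation of $Y^\lambda \otimes Y^\mu$ to a question about the permutation modules $M^\lambda$, where Mackey's theorem gives an explicit answer, and then to use the decomposition matrix of $S(d,d)$ to pass back and forth between $M$-modules and $Y$-modules. First I would recall that $M^\lambda \otimes M^\mu$ decomposes as a direct sum of permutation modules: by Mackey's formula, $\operatorname{Ind}_{\Sigma_\lambda}^{\Sigma_d} k \otimes \operatorname{Ind}_{\Sigma_\mu}^{\Sigma_d} k$ is isomorphic to a direct sum $\bigoplus_\nu (M^\nu)^{\oplus c_\nu}$ where the $\nu$ and multiplicities $c_\nu$ are read off from double cosets $\Sigma_\lambda \backslash \Sigma_d / \Sigma_\mu$ (equivalently, from contingency tables with row sums $\lambda$ and column sums $\mu$); this is a purely combinatorial datum independent of $k$. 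Since each $M^\lambda$ is a direct sum of Young modules with multiplicities $[M^\lambda : Y^\nu]$ that are entries of a matrix determined by the decomposition matrix of $S(d,d)$ (via the $p$-Kostka numbers, which are computable from the decomposition matrix — see the discussion around the Schur functor and $\mathcal{G}_{\Hom}$), we can express everything in a single system of linear relations.

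The key steps, in order: (1) Fix the integer matrix $K = \bigl([M^\lambda : Y^\nu]\bigr)_{\lambda,\nu \vdash d}$ of $p$-Kostka numbers; since $Y^\nu$ appears in $M^\nu$ with multiplicity one and only in $M^\mu$ for $\mu \trianglelefteq \nu$, the matrix $K$ is unitriangular with respect to the dominance order, hence invertible over $\Z$. I would recall (or cite the standard fact from the theory of Young modules) that $K$ is determined by the decomposition matrix of $S(d,d)$ — indeed $[M^\lambda:Y^\nu]$ equals the multiplicity of the Weyl module $V(\nu)$... more precisely it is computed from the decomposition matrix by the same reciprocity used in Proposition~\ref{prop:dec/homspaces}. (2) Write $M^\lambda = \bigoplus_\nu (Y^\nu)^{\oplus K_{\lambda\nu}}$, so $Y^\lambda = \bigoplus_\nu (K^{-1})_{\lambda\nu} M^\nu$ as a virtual module. (3) Compute $Y^\lambda \otimes Y^\mu = \bigoplus_{\nu,\rho} (K^{-1})_{\lambda\nu}(K^{-1})_{\mu\rho}\, (M^\nu \otimes M^\rho)$, substitute the Mackey decomposition of each $M^\nu \otimes M^\rho$ into permutation modules, and then re-expand each permutation module into Young modules using $K$. (4) Observe that the resulting integer coefficients of $Y^\sigma$ must be nonnegative — this is forced because $Y^\lambda \otimes Y^\mu$ is an honest module and, by a theorem essentially due to Mackey's argument as noted in the excerpt, it is genuinely a direct sum of Young modules — so the virtual computation in fact produces the true multiplicities.

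The main obstacle I anticipate is step (1): cleanly establishing that the $p$-Kostka matrix $K$ is recoverable from the decomposition matrix of $S(d,d)$, rather than being genuinely extra data. The cleanest route is to identify $M^\lambda$ with the image under the Schur functor of the signed $q$-permutation module / the module $\mathcal{G}_{\Hom}(M^\lambda)$, which by Donkin's or James's results is a direct sum of $I(\nu)$'s (injective indecomposables for $S(d,d)$) with the same multiplicities, and then use that the multiplicities of $I(\nu)$ in the "Young" tilting/injective modules are governed by decomposition numbers via the standard reciprocity $[I(\nu):L(\sigma)] = \sum_\tau [\nabla(\tau):L(\nu)][\nabla(\tau):L(\sigma)]$ together with the Kostka numbers $[M^\lambda_S : \nabla(\tau)]$ (the classical, characteristic-free Kostka numbers giving the $\nabla$-filtration of $S^\lambda V$). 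Assembling these identifications carefully — and checking that no information beyond the decomposition matrix enters — is the delicate bookkeeping; once it is in place, steps (2)–(4) are routine linear algebra over $\Z$.
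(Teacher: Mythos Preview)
Your proposal is correct but takes a genuinely different route from the paper.

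The paper's argument works entirely with ordinary characters. It uses the Specht filtration of $Y^\lambda$ with multiplicities $[Y^\lambda:S^\mu]=[V(\mu):L(\lambda)]$ to write $\operatorname{ch}Y^\lambda=\sum_{\mu\geq\lambda}[V(\mu):L(\lambda)]\chi^\mu$, multiplies the two characters, and then inverts the (unitriangular) change-of-basis from $\{\chi^\mu\}$ to $\{\operatorname{ch}Y^\nu\}$ to read off the Young-module multiplicities in $Y^\lambda\otimes Y^\mu$. No permutation modules or Mackey decompositions enter.

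Your route via $M^\lambda\otimes M^\mu$ and inversion of the $p$-Kostka matrix $K$ also works, but your step~(1) is more tortuous than it needs to be. You do not need $\mathcal G_{\Hom}$, injective hulls, or $\nabla$-filtrations of $S^\lambda V$ to see that $K$ is determined by the decomposition matrix: simply observe that in the Grothendieck group $[M^\lambda]=\sum_\mu K_{\mu\lambda}[S^\mu]$ with $K_{\mu\lambda}$ the classical (characteristic-free) Kostka numbers, while $[Y^\nu]=\sum_\mu [V(\mu):L(\nu)]\,[S^\mu]$; both transition matrices to the Specht basis are unitriangular, so $K=(p\text{-Kostka})$ is obtained by composing one with the inverse of the other. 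This is exactly the same linear algebra the paper is doing, just expressed in the $\{[M^\lambda]\}$ basis rather than the $\{\chi^\mu\}$ basis. What your approach buys is that the combinatorics of the tensor product is made visible through Mackey/contingency tables rather than through products in the character ring; what the paper's approach buys is a shorter path that sidesteps the virtual-module manoeuvre and the separate justification of step~(1).
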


\begin{proof} The Young modules have a filtration by Specht modules with multiplicities determined by decomposition
numbers for $S(d,d)$. In particular $Y^\lambda$ has a Specht
filtration with submodule $S^\lambda$ and other successive
subquotients of the form $S^\mu$ with $\mu>\lambda$, where
$[Y^\lambda : S^\mu]=[V(\mu) : L(\lambda)]$, see
\cite[4.6.4]{Martinbook}. Thus the decomposition numbers of $S(d,d)$
together with the ordinary character table of the symmetric group
$\Sigma_d$ (which is easily computed), allow one to compute the
ordinary character of $Y^\lambda$:
\begin{equation}
\label{eq: ordinarycharYoung} \operatorname{ch} Y^\lambda =
\sum_{\mu \geq \lambda} [V(\mu) : L(\lambda)]\chi^\mu
\end{equation}
where $\chi^\mu$ denotes the ordinary (irreducible) character of $S^\mu$.
Multiplying the two characters together gives us:
\begin{equation}
\label{eq: charoftensorprod} \operatorname{ch}(Y^\lambda \otimes
Y^\mu) = \sum m_\tau \chi^\tau.
\end{equation}
Finally, the triangular nature of the decomposition matrix of
$S(d,d)$ allows one to recover, from the character of $Y^\lambda
\otimes Y^\mu$, the direct sum decomposition of $Y^\lambda \otimes
Y^\mu$ into Young modules, using \eqref{eq: ordinarycharYoung} and
\eqref{eq: charoftensorprod}.
\end{proof}

Define numbers $g_{\sigma}^{\lambda, \mu}$ by

\begin{equation}
\label{eq: defofgsigmalambdamu} Y^{\lambda}\otimes Y^{\mu}=
\bigoplus_{\sigma \vdash d} g_{\sigma}^{\lambda, \mu} \,\,Y^\sigma.
\end{equation}
Then we have the following:

\begin{eqnarray}
\nonumber
\dim \Ext^{n}_{\Sigma_{d}}(Y^{\lambda},Y^{\mu})&=&\dim \Ext^{n}_{\Sigma_d}(k, Y^\lambda \otimes Y^\mu) \text{ since } Y^\lambda \text{ is self-dual,}\\
\label{eq: formulaforextylambdaymjintermsofkylambda}
&=&\sum_{\sigma\vdash d} g_{\sigma}^{\lambda,\mu}\dim
\Ext^{n}_{\Sigma_{d}}(k,Y^\sigma)\\&=& \sum_{\sigma\vdash d}
g_{\sigma}^{\lambda,\mu}[\HH_n(\Sigma_{d}, V^{\otimes d}):L(\sigma)]
\text{ by Theorem } \ref{thm:multl of simples in
Ljgivescohomology}(b).\nonumber
\end{eqnarray}

We can collect the previous results into a  theorem stating that
knowing only the dimension of spaces of homomorphisms between Young
modules, one can compute  $\Ext^i_{\Sigma_d}(Y^\lambda, Y^\mu)$ for
all $i \geq 0$.  We wish to use Theorem  \ref{thm:multl of simples
in Ljgivescohomology}(b) and Corollary \ref{corgivingstructure} to
compute $\dim \Ext^i_{\Sigma_d}(Y^\lambda, Y^\mu)$. The composition
factors of $S^a(V^{(b)})$ are known from work of Doty
\cite{Dotysympower}. The module $\Lambda^a(V)$ is irreducible,
isomorphic to $L(1^a)=L((1,1, \ldots, 1))$. Thus $\Lambda^a(V^{(b)})
\cong L((p^b,p^b, \ldots, p^b))$. Thus the only remaining difficulty
is computing the composition multiplicities in the tensor products
of irreducibles. Since these can be determined from decomposition
numbers, we obtain the following:

\begin{thm}
\label{thm: Homdeterminesallcohomology} Suppose $\di
\Hom_{\Sigma_t}(Y^\rho, Y^\tau)$ is known for all $t \leq d$ and for
all $\rho, \tau \vdash t$. Then there is an algorithm to compute
$\dim \Ext^i_{\Sigma_d}(Y^\lambda, Y^\mu)$ for all $i \geq 0$ and
all $\lambda, \mu \vdash d$.
\end{thm}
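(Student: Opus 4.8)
The plan is to assemble the algorithm from the pieces already developed in this section, checking that each reduction is effective. First I would invoke Proposition~\ref{prop:dec/homspaces}: the hypothesis that $\dim\Hom_{\Sigma_t}(Y^\rho,Y^\tau)$ is known for all $t\le d$ is equivalent to knowing the decomposition matrix of $S(t,t)$ for every $t\le d$. (The hypothesis for $t<d$ is needed because composition factors of symmetric powers $S^a(V^{(b)})$ of the natural $GL_d(k)$-module decompose into irreducibles $L(\nu)$ for partitions $\nu$ of sizes $a p^b < d$, and multiplicities in tensor products of such irreducibles are governed by the smaller Schur algebras; I would make this dependence explicit.) Then I would fix $n=d$ as in the running assumption and spell out the three-step computation of $\dim\Ext^i_{\Sigma_d}(Y^\lambda,Y^\mu)$.

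The three steps are as follows. Step one: using Proposition~\ref{prop: computingtensorofYoungmodules}, compute the numbers $g_\sigma^{\lambda,\mu}$ from \eqref{eq: defofgsigmalambdamu}; this is effective given the decomposition matrix of $S(d,d)$ and the ordinary character table of $\Sigma_d$, via \eqref{eq: ordinarycharYoung} and \eqref{eq: charoftensorprod} and the triangularity of the decomposition matrix. Step two: by \eqref{eq: formulaforextylambdaymjintermsofkylambda}, it suffices to compute $[\HH_i(\Sigma_d,V^{\otimes d}):L(\sigma)]$ for each $\sigma\vdash d$. By Remark~\ref{rem: cor821remark}, one lists all monomials in $\bigoplus_{d\ge 0}\HH_\bu(\Sigma_d,W^{\otimes d})$ of homological degree $i$ and weight $d$ using the explicit degree and weight formulas in Theorem~\ref{thm:shiftedfinalanswer for p=2 and p oddL}; this is a finite search since both degree and weight are bounded. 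Each such monomial contributes a $GL_d(k)$-module of the tensor-product form in Corollary~\ref{corgivingstructure}, namely $S^{a_1}(V)\otimes S^{a_2}(V^{(c_2)})\otimes\cdots$ (with an exterior part when $p$ is odd). Step three: compute $[\,\bigotimes_j S^{a_j}(V^{(c_j)})\otimes\bigotimes_j \Lambda^{d_j}(V^{(d_j)}):L(\sigma)\,]$ for each such module. Here I would note that $\Lambda^{a}(V^{(b)})\cong L((p^b)^a)$ is already irreducible (Remark~\ref{rem:tensortwists}), and the composition factors of each $S^{a}(V^{(b)})\cong S^a(V)^{(b)}$ are known by Doty's work~\cite{Dotysympower} together with the Steinberg tensor product theorem; then the composition factors of a tensor product of known irreducibles are computable from the decomposition matrices of $S(t,t)$ for $t\le d$ (equivalently, from the given Hom-dimensions), since tensor product multiplicities $[L(\nu)\otimes L(\nu'):L(\sigma)]$ can be extracted from characters and the triangular decomposition matrix in the usual way.

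Summing $\sum_{\sigma\vdash d} g_\sigma^{\lambda,\mu}[\HH_i(\Sigma_d,V^{\otimes d}):L(\sigma)]$ then yields $\dim\Ext^i_{\Sigma_d}(Y^\lambda,Y^\mu)$ by \eqref{eq: formulaforextylambdaymjintermsofkylambda}, completing the algorithm.

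The main obstacle I anticipate is making the claim in step three genuinely algorithmic, i.e. verifying carefully that \emph{all} the irreducible constituents $L(\nu)$ that appear — inside Doty's description of $S^a(V)^{(b)}$ and inside the iterated tensor products — are indexed by partitions of size at most $d$, so that only finitely many, already-known, decomposition matrices $\{S(t,t):t\le d\}$ are required, and that no circularity with $S(d,d)$ itself arises beyond what the hypothesis supplies. A secondary point to handle cleanly is bounding the finite monomial search in step two and confirming that the degree formula in Theorem~\ref{thm:shiftedfinalanswer for p=2 and p oddL} together with the weight constraint indeed leaves only finitely many monomials in each bidegree $(i,d)$; this is immediate since each generator has positive degree and positive weight, but it should be stated.
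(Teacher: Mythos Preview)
Your proposal is correct and follows essentially the same route as the paper's proof: reduce to decomposition matrices via Proposition~\ref{prop:dec/homspaces}, compute the $g_\sigma^{\lambda,\mu}$ via Proposition~\ref{prop: computingtensorofYoungmodules}, and determine $[\HH_i(\Sigma_d,V^{\otimes d}):L(\sigma)]$ by combining Corollary~\ref{corgivingstructure}, Doty's description of $S^a(V)$, and tensor-product multiplicities of irreducibles obtained (via Littlewood--Richardson and triangularity) from the decomposition matrices of $S(t,t)$ for $t\le d$. Your added care about bounding the monomial search and checking that only partitions of size $\le d$ arise is a welcome elaboration of points the paper leaves implicit.
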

\begin{proof}
Suppose all the dimensions of the $\Hom$ spaces are known. Then by
Proposition \ref{prop:dec/homspaces}, all the decomposition matrices
for the Schur algebras $S(t,t)$ can be computed for $t \leq d$. The
Littlewood-Richardson rule lets one compute the multiplicities in a
Weyl filtration of $V(\lambda) \otimes V(\mu)$. The unitriangular
nature of the decomposition matrix of $S(d,d)$ then lets one compute
the composition factor multiplicities in tensor products of
irreducible modules.  This is the step that requires the
decomposition numbers not just for $S(d,d)$, but for all $S(t,t)$
with $t \leq d$, since we will be computing tensor products of
$L(\mu)$'s with $\mu \vdash t \leq d$. These multiplicities are all
that is needed to determine the multiplicities in
$\HH_{\bu}(\Sigma_d, \vt)$, since the multiplicities in
$S^a(V^{(b)})$ and $\Lambda^a(V^{(b)})$ are known, so we can use
Corollary \ref{corgivingstructure} to compute $\Ext^i_{\sd}(k,
Y^\lambda)$. Now Proposition \ref{prop:
computingtensorofYoungmodules} says one can obtain all the
$\Ext^i_{\sd}(Y^\lambda, Y^\mu)$.
\end{proof}
In the next section we will illustrate this algorithm in an example,
see Table \ref{table: extbetweenyoungmodulesd=6} for the result.

\section{A complete example: $d=6,\,\, p=2$}
\label{sec: completeexampled=6}
\subsection{}

Before proving some general results, we apply the description of
$\HH_\bu(\Sigma_d, \vt)$  and Theorem \ref{thm:multl of simples in
Ljgivescohomology} to determine $\HH^\bu(\Sigma_d, Y^\lambda)$ for
all $\lambda \vdash 6$ in characteristic two. In the next section we describe a method
for determining these groups for $\lambda \vdash d$ where $d$ is
arbitrary and $p=2$.   In general the method requires one to compute
some tensor products of simple modules for $GL_t(k)$ where $t$ is
considerably smaller than $d$. For example to compute Young module
cohomology for  $d=16$ and $p=2$ in all degrees the most difficult
computation is that of $L(2,2,1) \otimes L(1^{3})$ for $GL_8(k)$,
which is easily handled.

We then demonstrate how, using the known decomposition matrices for
$S(6,6)$ and the character table of $\Sigma_6$, one can use the
results of Section \ref{sec: cohomology between Young modules} to
determine $\Ext^\bu_{\Sigma_6}(Y^\lambda, Y^\mu)$ for arbitrary
$\lambda, \mu \vdash 6$.

\subsection{}The first step is to determine the structure of the $GL_6(k)$-module $\HH_\bu(\Sigma_6, V^{\otimes 6})$. This $V$ will be concentrated in degree zero, so we will be using the description given in Theorem \ref{thm:shiftedfinalanswer for p=2 and p oddL}(a) as a polynomial algebra with certain generators. We can first determine all possible ``shapes" for monomials which contribute to $d=6$ in the description of $\oplus_{d \geq 0}\HH_\bu(\Sigma_d, V^{\otimes d})$ in Theorem \ref{thm:shiftedfinalanswer for p=2 and p oddL}. Then for each monomial shape it is easy to work out the corresponding $GL_6(k)$-module structure from the formulas given in Section \ref{sec:The.prime.two} and the fact that we have a polynomial algebra. For example, monomials of the
form $\overline{Q}_2(v)\overline{Q}_3(v)\cdot v \cdot v$ give a
summand of $\HH_5(\Sigma_6, \vt)$ which is isomorphic to $V^{(1)}
\otimes V^{(1)} \otimes S^2(V)$ as a $GL_6(k)$-module. Monomials of
the form $\overline{Q}_{1,3}(v) \cdot v \cdot v$ give a
$GL_6(k)$-module isomorphic to $V^{(2)} \otimes S^2(V)$ as a
direct summand of $\HH_7(\Sigma_6,\vt)$. For $d=6$ and $p=2$ the possible monomials are listed in Table \ref{table:summandsind=6}, along with the corresponding $GL_6(k)$-module structure. In all such formulas, one should assume the $``v"$'s are arbitrary vectors in $V$, not assumed to be equal.\\

\begin{table}[here]
\begin{tabular}{|llll|}
  \hline
  Monomial shape & &$GL_6(k)$ structure & Degree $\bu$ \\
  \hline
  $\overline{Q}_i(v)\cdot v^4$ & $1 \leq i$& $V^{(1)} \otimes S^4(V)$ & $i$ \\
  $\overline{Q}_{i,j}(v) \cdot v^2$ &$1 \leq i \leq j$  & $V^{(2)} \otimes S^2(V)$ & $i+2j$ \\
  $\overline{Q}_{i}(v)\cdot \overline{Q}_j(v) \cdot v^2$ & $1\leq i<j$ &$V^{(1)} \otimes V^{(1)} \otimes S^2(V)$&$i+j$\\
  $\overline{Q}_i(v) \cdot \overline{Q}_i(v) \cdot v^2$ & $1 \leq i $& $S^2(V^{(1)}) \otimes S^2(V)$ & $2i$ \\
  $\overline{Q}_{i,j}(v) \cdot \overline{Q}_k(v)$ & $1 \leq i \leq j, \,\,1 \leq k$
   & $V^{(2)} \otimes V^{(1)}$ & $i+2j+k$ \\
  $\overline{Q}_i(v) \cdot \overline{Q}_j(v) \cdot \overline{Q}_k(v)$ & $1 \leq i < j < k$ &
  $V^{(1)}\otimes V^{(1)} \otimes V^{(1)}$ & $i+j+k$ \\
  $\overline{Q}_i(v) \cdot \overline{Q}_i(v) \cdot \overline{Q}_j(v)$ &$1 \leq i, j  \,\,\, i \neq j$& $S^2(V^{(1)})\otimes V^{(1)}$ & $2i+j$ \\
  $\overline{Q}_i(v) \cdot \overline{Q}_i(v) \cdot \overline{Q}_i(v)$ &$1 \leq i$& $S^3(V^{(1)})$ & $3i$ \\

  \hline
\end{tabular}
\vspace*{0.1in} \caption{$GL_6(k)$-module summands of
$\HH_\bu(\Sigma_6, V^{\otimes 6})$, $\bu>0$.}
\label{table:summandsind=6}
\end{table}

The modules which occur in Table \ref{table:summandsind=6} (and
indeed for any $d$ in characteristic two) are all tensor products of
Frobenius twists of symmetric powers of the natural module. In
\cite{Dotysympower}, Doty determined the entire submodule structure
of $S^m(V)$. It is always multiplicity free, and there is a simple
rule for determining which $L(\lambda)$ appear. In the Grothendieck
group we have:

\begin{equation}
    \begin{array}{l}
  [S^1(V)]=[L(1)]\\

 [S^2(V)]=[L(2)]+[L(1^2)] \\

  [S^3(V)]=[L(3)]+[L(1^3)] \\

 [S^4(V)]=[L(4)]+[L(3,1)]+[L(2^2)]+[L(1^4)]. \\
 \end{array}
\end{equation}

To calculate the tensor products in Table
~\ref{table:summandsind=6}, one can often make use of the Steinberg
Tensor Product Theorem (STPT), but we need some information on tensor products of simple modules. The next lemma follows from \cite{KleshSheth}, where the entire submodule structure of the Weyl module $V(2^a,1^b)$ is determined in any characteristic.

\begin{lem}
\label{lem: twopartWeylmodulesstructure}Let $p=2$ and let $a \leq n$. Then, in the Grothendieck group we have:
\begin{equation}
V(2,1^{a-2})=
\begin{cases} [L(2,1^{a-2})] & \text{if $a$ is odd,}
\\
[L(2,1^{a-2})] + [L(1^a)] &\text{if $a$ is even.}
\end{cases}
\end{equation}
\begin{equation}
V(2,2,1^{a-4})=
\begin{cases} [L(2,2,1^{a-4})] + [L(2,1^{a-2})]& \text{if $a \equiv 0$ mod 4,}
\\
 [L(2,2,1^{a-4})] + [L(1^{a})] &\text{if $a \equiv 1$ mod 4,}
 \\
 [L(2,2,1^{a-4})] + [L(2,1^{a-2})]+[L(1^{a})] &\text{if $a \equiv 2$ mod 4,}\\
  [L(2,2,1^{a-4})] &\text{if $a \equiv 3$ mod 4.}
\end{cases}
\end{equation}
\end{lem}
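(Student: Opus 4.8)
The statement to prove is Lemma~\ref{lem: twopartWeylmodulesstructure}, which records the decomposition of the two-column Weyl modules $V(2,1^{a-2})$ and $V(2,2,1^{a-4})$ into simple $GL_n(k)$-modules (equivalently, in the Grothendieck group) for $p=2$ and $a \le n$. The plan is to read these decompositions directly off the submodule structure of the relevant Weyl modules, which is given in the cited work of Kleshchev and Sheth.

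\textbf{First step: set up the translation.} I would begin by recalling that for $p=2$ the Weyl modules appearing here are indexed by partitions of the form $(2^b,1^c)$ with at most two columns, so they are exactly the modules $V(2^a,1^b)$ whose submodule lattices are computed in \cite{KleshSheth}. For the first case I take $b=1$ (a single $2$) and $c = a-2$; for the second case $b=2$ and $c = a-4$. The main content of \cite{KleshSheth} is that each such Weyl module is multiplicity-free and has submodule lattice explicitly described, with the composition factors being the irreducibles $L(2^{b'},1^{c'})$ for certain $(b',c')$ depending on the $2$-adic arithmetic of the parameters. So the task reduces to: list the composition factors predicted by that description, and verify the stated congruence conditions on $a$ mod $2$ (resp. $a$ mod $4$) are precisely the combinatorial conditions governing which factors survive.

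\textbf{Second step: the $V(2,1^{a-2})$ case.} Here the Weyl module sits between the divided power and symmetric power constructions; its only possible composition factors are $L(2,1^{a-2})$ (the head, always present with multiplicity one) and $L(1^a) = \Lambda^a(V)$. One then needs to decide when $L(1^a)$ occurs. I would appeal to the Kleshchev--Sheth criterion (or, equivalently, to a Jantzen sum formula / carry-pattern computation) to see that $[V(2,1^{a-2}):L(1^a)] = 1$ exactly when $a$ is even and $0$ when $a$ is odd. Concretely, this is the statement that the Weyl module $V(2,1^{a-2})$ is irreducible iff $a$ is odd, which is a known low-rank fact and can be cross-checked by dimension count using the Weyl dimension formula for the partition $(2,1^{a-2})$ against $\dim L(2,1^{a-2}) + \dim \Lambda^a V$.

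\textbf{Third step: the $V(2,2,1^{a-4})$ case and the main obstacle.} This is the genuinely delicate part. The possible composition factors are $L(2,2,1^{a-4})$ (always, as head), $L(2,1^{a-2})$, and $L(1^a)$, and one must determine for each residue of $a$ modulo $4$ which of the latter two appear. I would extract this directly from the submodule lattice in \cite{KleshSheth}: their description of $V(2^2,1^{a-4})$ partitions cases by $a \bmod 4$ in exactly this way, and each of the four branches of our displayed formula should match one branch of theirs. The main obstacle is purely bookkeeping: making sure the indexing conventions of \cite{KleshSheth} (their labelling of two-column partitions, their normalization of $p$, any shift by $a$ versus $a-4$) line up with ours, and handling the small values of $a$ (e.g.\ $a=4,5,6,7$) as base cases where the general pattern might degenerate. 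Once the conventions are aligned, each of the four cases is an immediate quotation; I do not anticipate needing any new homological input beyond \cite{KleshSheth}, together with the elementary facts that $\Lambda^a(V) \cong L(1^a)$ is simple and that $V(\lambda)$ always has simple head $L(\lambda)$ with multiplicity one.
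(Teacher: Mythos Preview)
Your proposal is correct and is essentially the same as the paper's approach: the paper simply states that the lemma follows from \cite{KleshSheth}, where the full submodule structure of $V(2^a,1^b)$ is determined in any characteristic, and your plan is exactly to read the composition factors off that description with appropriate bookkeeping of conventions.
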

The Littlewood-Richardson rule can be used to compute the multiplicities in a Weyl filtration of $V(1^a) \otimes V(1^b) \cong L(1^a) \otimes L(1^b)$, and all terms have the form $V(2^x, 1^y)$. Since the composition factor multiplicities in these ``two-column" Weyl modules are all known, one can determine the composition factor multiplicities in any $L(1^a) \otimes L(1^b)$. We will only need those in the following lemma, which follows easily from the Littlewood-Richardson rule and Lemma \ref{lem: twopartWeylmodulesstructure}.

\begin{lem}
\label{lem: omnibus1a1btensorlemma}
Suppose $p=2$ and let $n \geq a+1$ in the first case and $n \geq a+2$ in the second. Then in the Grothendieck group we have:
\label{lem: tensorproducts11x11111}

\begin{equation*}
[L(1) \otimes L(1^a)]=
\begin{cases}
[L(2,1^{a-1})] + [L(1^{a+1})] & \text{ if $a \equiv 0$ mod 2}\\
[L(2,1^{a-1})]+ 2[L(1^{a+1})] & \text{ if $a \equiv 1$ mod 2}
\end{cases}
\end{equation*}

\begin{equation*}
 [L(1^2) \otimes L(1^a)]=\begin{cases}
  [L(2^2,1^{a-2})]+ 2[L(2,1^a)]+ 3[L(1^{a+2})] & \text{ if $a \equiv 0$ mod 4}\\
  [L(2^2,1^{a-2})]+ [L(2,1^a)] + [L(1^{a+2})] & \text{ if $a \equiv 1$ mod 4}\\
  [L(2^2,1^{a-2})]+ 2 [L(2,1^a)] + 2[L(1^{a+2})] & \text{ if $a \equiv 2$ mod 4}\\
   [L(2^2,1^{a-2})]+ [L(2,1^a)] + 2 [L(1^{a+2})] & \text{ if $a \equiv 3$ mod 4}
   \end{cases}
   \end{equation*}
\end{lem}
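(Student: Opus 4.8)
\textbf{Plan for proving Lemma~\ref{lem: omnibus1a1btensorlemma}.}
The plan is to reduce both statements to computing composition factor multiplicities in two-column Weyl modules, which are supplied by Lemma~\ref{lem: twopartWeylmodulesstructure}. First I would recall that since $p=2$ and $V$ is the natural module, $L(1^a)=\Lambda^a(V)$, so $L(1)\otimes L(1^a)=V\otimes\Lambda^a(V)$ and $L(1^2)\otimes L(1^a)=\Lambda^2(V)\otimes\Lambda^a(V)$. As remarked just before the lemma, the Littlewood--Richardson rule gives the Weyl filtration multiplicities of these tensor products, and every term has the shape $V(2^x,1^y)$ (a two-column partition): explicitly $\Lambda^1(V)\otimes\Lambda^a(V)$ has a filtration with sections $V(2,1^{a-1})$ and $V(1^{a+1})$, each with multiplicity one, while $\Lambda^2(V)\otimes\Lambda^a(V)$ has sections $V(2^2,1^{a-2})$, $V(2,1^a)$, $V(1^{a+2})$, again each with multiplicity one. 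These are the only partitions $\nu$ with $\nu'$ obtained from $(a)'$ or $(a,a)'$ by adding a single column of size $1$ or $2$, so the enumeration is short.

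Next I would pass to the Grothendieck group and substitute the decomposition of each two-column Weyl module. For the first identity I need $[V(1^m)]=[L(1^m)]$ (these Weyl modules are irreducible, being exterior powers) and the first case of Lemma~\ref{lem: twopartWeylmodulesstructure}, namely $[V(2,1^{m-2})]=[L(2,1^{m-2})]$ when $m$ is odd and $[L(2,1^{m-2})]+[L(1^m)]$ when $m$ is even. Adding $[V(2,1^{a-1})]+[V(1^{a+1})]$ and tracking the parity of $a+1$ (equivalently of $a$) yields the stated two-case answer for $[L(1)\otimes L(1^a)]$. For the second identity I add $[V(2^2,1^{a-2})]+[V(2,1^a)]+[V(1^{a+2})]$ and substitute: the second case of Lemma~\ref{lem: twopartWeylmodulesstructure} expands $V(2,2,1^{a-2})$ according to $a-2 \bmod 4$ (equivalently $a\bmod 4$), the first case expands $V(2,1^a)$ according to the parity of $a$, and $V(1^{a+2})=L(1^{a+2})$. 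Collecting the coefficients of $L(2^2,1^{a-2})$, $L(2,1^a)$ and $L(1^{a+2})$ in each of the four residue classes $a\equiv 0,1,2,3\bmod 4$ produces exactly the table in the lemma; this is a finite bookkeeping check, one residue class at a time.

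The only mild subtlety — and the step I expect to demand the most care — is matching conventions: Lemma~\ref{lem: twopartWeylmodulesstructure} is stated for $V(2,1^{a-2})$ and $V(2,2,1^{a-4})$ indexed by the total number of boxes-minus, so when I feed in a partition like $(2,1^a)$ I must identify it as "$V(2,1^{m-2})$ with $m=a+2$" and likewise $(2^2,1^{a-2})$ as "$V(2,2,1^{m-4})$ with $m=a+2$," then translate the residue condition on $m$ back to a residue condition on $a$. I also need the ambient dimension hypotheses ($n\geq a+1$, resp.\ $n\geq a+2$) precisely so that none of the two-column partitions appearing has more than $n$ rows, so that Lemma~\ref{lem: twopartWeylmodulesstructure} (and irreducibility of $\Lambda^m V=L(1^m)$) applies to every term; I would state this at the outset. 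Beyond that, the proof is a direct calculation in the Grothendieck group with no further input.
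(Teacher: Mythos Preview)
Your proposal is correct and follows essentially the same approach as the paper: the paper states that the lemma ``follows easily from the Littlewood--Richardson rule and Lemma~\ref{lem: twopartWeylmodulesstructure},'' which is exactly the two-step reduction you carry out (Weyl filtration via LR, then substitute the two-column Weyl decompositions). Your explicit bookkeeping, including the care with the indexing shift $m=a+1$ or $m=a+2$ when invoking Lemma~\ref{lem: twopartWeylmodulesstructure}, is precisely what the paper leaves to the reader.
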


 For this section we only use the following special case of Lemma \ref{lem: omnibus1a1btensorlemma}:

\begin{equation}
    \label{eq: easy tensor products}
        \begin{array}{l}
           [ L(1) \otimes L(1)]=[L(2)] + 2[L(1^2)]\\

   [L(1^2) \otimes L(1)]=   [L(2,1)]+[L(1^3)]. \\
        \end{array}
\end{equation}

\begin{rem}
\label{rem: remarkabouttensorproductsofsimples} Since we are assuming $n=d$, the tensor products $L(\lambda) \otimes L(\mu)$ which arise in \eqref{eq: descriptionofsummandsinchar2} and \eqref{eq: descriptionofsummandsincharodd} only occur when $\lambda \vdash a, \mu \vdash b$ with $a+b \leq d$. Thus \emph{all statements about such tensor products of polynomial $GL_d(k)$-modules will implicitly assume that $d$ is large}. For example \eqref{eq: easy tensor products} would be false if $n=2$, in this case there is no module $L(1^3)$. However for us writing $L(1^2) \otimes L(1)$ implies $n$ is at least three. Similarly Lemma \ref{lem: omnibus1a1btensorlemma} would be false without the assumptions on $n$ and $a$.
\end{rem}

Using \eqref{eq: easy tensor products} and the STPT, we can
calculate the tensor products in Table \ref{table:summandsind=6} and
determine the composition multiplicities in each summand. These are
given in Table \ref{table: constituentsofsummandd=6case}.

\begin{table}[here]
\begin{tabular}{|l|cccccc|}
\hline
     &$L(6)$&$L(51)$&$L(42)$&$L(33)$&$L(31^3)$&$L(2^3)$ \\
\hline  $V^{(1)} \otimes S^4(V)$&1&1&1&2&1&1\\
  $V^{(2)} \otimes S^2(V)$&1&1&0&0&0&0\\
  $V^{(1)}  \otimes V^{(1)}  \otimes S^2(V)$&1&1&2&2&0&2\\
  $S^2(V^{(1)}) \otimes S^2(V)$&1&1&1&1&0&1\\
  $V^{(2)} \otimes V^{(1)}$&1&0&0&0&0&0\\
  $V^{(1)}  \otimes V^{(1)}  \otimes V^{(1)}$&1&0&2&0&0&2\\
  $S^2(V^{(1)} ) \otimes V^{(1)} $&1&0&1&0&0&1\\
  $S^3(V^{(1)})$&1&0&0&0&0&1\\
  \hline
\end{tabular}
\vspace*{0.1in} \caption{Simple module multiplicities in direct
summands of $\HH_\bu(\Sigma_6, V^{\otimes 6}).$} \label{table:
constituentsofsummandd=6case}
\end{table}
\subsection{}The final step is to determine how many copies of each direct summand appear in each degree. Here is an example:

\begin{lem}
\label{lemma:multiplicityofsamplesummandsd=6} The $GL_6(k)$ module
$V^{(2)} \otimes S^2(V)$ appears
$\lfloor{\frac{t-1}{2}}\rfloor-\lfloor{\frac{t-1}{3}}\rfloor$ times
in $\HH_t(\Sigma_6,V^{\otimes 6})$, where $\lfloor \rfloor$ is the
greatest integer function.
\end{lem}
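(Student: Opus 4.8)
The plan is to count, for each $t$, the number of monomials of ``shape'' $\overline{Q}_{i,j}(v)\cdot v^2$ with $1\le i\le j$ whose degree equals $t$; by Table \ref{table:summandsind=6} each such monomial contributes exactly one copy of the summand $V^{(2)}\otimes S^2(V)$ to $\HH_t(\Sigma_6, V^{\otimes 6})$, and these are the only monomials producing this particular $GL_6(k)$-module. So the problem reduces to a combinatorial count: how many pairs $(i,j)$ with $1\le i\le j$ satisfy $i+2j=t$?

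First I would fix the degree formula. From Table \ref{table:summandsind=6} the monomial $\overline{Q}_{i,j}(v)\cdot v^2$ sits in degree $i+2j$. So I want the number of integer pairs with $1\le i\le j$ and $i+2j=t$. Writing $i=t-2j$, the constraint $i\ge 1$ gives $j\le (t-1)/2$, and the constraint $i\le j$ gives $t-2j\le j$, i.e. $j\ge t/3$. Thus the count is the number of integers $j$ with $\lceil t/3\rceil \le j \le \lfloor (t-1)/2\rfloor$, which is $\lfloor (t-1)/2\rfloor - \lceil t/3 \rceil + 1$. The remaining task is purely arithmetic: check that $\lfloor (t-1)/2\rfloor - \lceil t/3\rceil + 1 = \lfloor (t-1)/2\rfloor - \lfloor (t-1)/3\rfloor$, i.e. that $\lceil t/3\rceil - 1 = \lfloor (t-1)/3\rfloor$, which is the standard identity $\lceil t/3\rceil = \lfloor (t-1)/3\rfloor + 1$ valid for all integers $t\ge 1$ (both sides jump by one exactly when $t$ passes a multiple of $3$). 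One should also note that there is no issue with $i>d$ or similar: for $d=6$, $n=6\ge d$, so all the generators $\overline{Q}_{i,j}$ with $i,j\ge 1$ genuinely occur and no truncation takes place.

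A point worth being careful about: I should confirm that no \emph{other} monomial shape in Table \ref{table:summandsind=6} yields a $GL_6(k)$-module isomorphic to $V^{(2)}\otimes S^2(V)$. Inspecting the table, the eight shapes give respectively $V^{(1)}\otimes S^4(V)$, $V^{(2)}\otimes S^2(V)$, $V^{(1)}\otimes V^{(1)}\otimes S^2(V)$, $S^2(V^{(1)})\otimes S^2(V)$, $V^{(2)}\otimes V^{(1)}$, $V^{(1)}\otimes V^{(1)}\otimes V^{(1)}$, $S^2(V^{(1)})\otimes V^{(1)}$, $S^3(V^{(1)})$; only the second is $V^{(2)}\otimes S^2(V)$, and since we are taking $\HH_\bu(\Sigma_6,V^{\otimes 6})$ as a direct sum over monomials (Theorem \ref{thm:shiftedfinalanswer for p=2 and p oddL}(a) gives a polynomial algebra with an explicit monomial basis of the right weight in each degree), counting monomials of that one shape in degree $t$ is exactly counting copies of the summand. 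The main obstacle, such as it is, is just the floor/ceiling bookkeeping in the previous paragraph—making sure the endpoints of the range for $j$ are inclusive and correctly rounded—but this is routine once the substitution $i=t-2j$ is made.
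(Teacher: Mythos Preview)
Your proof is correct and takes essentially the same approach as the paper: both reduce to counting pairs $(i,j)$ with $1\le i\le j$ and $i+2j=t$. Your finish is arguably cleaner---you solve directly for the range of $j$ and verify the floor/ceiling identity, whereas the paper substitutes $j=i+c$ to recognize the count as OEIS sequence A008615 and quotes the formula from there.
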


\begin{proof} From line two in Table \ref{table: constituentsofsummandd=6case}, the multiplicity in degree $t$ will be the number of ways to write $t=i+2j$ with $1 \leq i \leq j$, equivalently to write $t-1=i+2j$ with $0 \leq i<j$. Expressing $j$ as $i+c$ for $c > 0$, we immediately see this is the number of ways to write $t-1$ as the sum of a positive even integer and a nonnegative multiple of $3$. This is just sequence number A008615 in \cite{Intseq}, and the formula given is one of several possible. For example when $t=9$ the monomials of shapes $\overline{Q}_{1,4}(v) \cdot v^2$ and
$\overline{Q}_{3,3}(v) \cdot v^2$ contribute the two such summands.
\end{proof}

The other summands can be handled similarly and, since $d$ is fairly
small,  have reasonable closed form multiplicity formulas. For
instance $V^{(2)}\otimes V^{(1)} $ will occur in degree $t$ with
multiplicity the number of ways to write $t$ as $i+2j+k$ with $i
\leq j$. One fairly easily determines this is $\lfloor
\frac{(t-1)^2}{12}\rfloor$, the number of partitions of $t-1$ with
exactly 3 parts.

We now have all the information to determine the nontrivial Young
module cohomology, the remaining calculations we leave to the
reader. For notational convenience in the formulas below set \
\begin{equation}
\label{eq: defofa} a=a(j)=\frac{(j+1)(j+2)}{6}
\end{equation}
\ and let $\lceil \rceil$ be the ceiling function.

\begin{thm}
\label{th: Young module cohomologyford=6} Let $p=2$. The dimensions
of the nonzero cohomology groups $\HH^j(\Sigma_6, Y^\lambda)$ are
given by the following table:

\begin{table}[here]
    \begin{tabular}{|l|l|l|l|l|}
\hline    $\lambda$ & $\dim \HH^j(\Sigma_6,
Y^\lambda)$&\,\,\,\,\,&$\lambda$ & $\dim \HH^j(\Sigma_6,
Y^\lambda)$\\ [1ex] \hline
  $(6)$ & $\lceil a \rceil $&\,\,\,\,\,&  $(5,1)$ & $1+\lfloor \frac{2j}{3} \rfloor$\\[1ex]
     $(4, 2)$ & $\lfloor a \rfloor $&\,\,\,\,\,& $(3,3)$ & $j+1$\\[1ex]
      $(3,1^3)$ & $1$&\,\,\,\,\,&   $(2^3)$& $\lceil a \rceil$ \\\hline
    \end{tabular}
    \vspace*{0.1in}
    \caption{Cohomology of Young modules for $\Sigma_6$, p=2}
    \label{table:finalanswerd=6}
    \end{table}
\end{thm}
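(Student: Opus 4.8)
The plan is to apply the machinery of Theorem~\ref{thm:multl of simples in Ljgivescohomology}(b) together with the explicit description of $\HH_\bu(\Sigma_6,V^{\otimes 6})$ assembled in Tables~\ref{table:summandsind=6}, \ref{table: constituentsofsummandd=6case}. Concretely, for each $\lambda \vdash 6$ and each degree $j$, the number $\dim \HH^j(\Sigma_6,Y^\lambda)$ equals the multiplicity $[\HH_j(\Sigma_6,V^{\otimes 6}):L(\lambda)]$. Since $\HH_j(\Sigma_6,V^{\otimes 6})$ decomposes as a direct sum of the eight summand types in Table~\ref{table:summandsind=6}, each occurring with a combinatorial multiplicity in degree $j$ governed by the constraints on the indices $i,j,k$ (as in Lemma~\ref{lemma:multiplicityofsamplesummandsd=6}), the desired dimension is a finite sum: for each summand type $M$, the number of monomial shapes of that type landing in degree $j$, times $[M:L(\lambda)]$ from Table~\ref{table: constituentsofsummandd=6case}.

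First I would record, for each of the eight summand types, the generating-function or closed-form count of how often it appears in degree $j$; Lemma~\ref{lemma:multiplicityofsamplesummandsd=6} does this for $V^{(2)}\otimes S^2(V)$, and the remark following it does it for $V^{(2)}\otimes V^{(1)}$, so the remaining six are of the same elementary flavor (counting partitions of $j$ or $j-1$ or $j-2$ with prescribed numbers of parts, or with parts subject to $\leq$ or $<$ constraints). I would organize these into a single table. Next, for each fixed $\lambda$, I would read off the column of Table~\ref{table: constituentsofsummandd=6case} corresponding to $L(\lambda)$ and form the weighted sum of the degree-$j$ multiplicity functions. For instance, for $\lambda=(3,1^3)$ only the summand $V^{(1)}\otimes S^4(V)$ contributes (multiplicity $1$), and that summand occurs in degree $j$ exactly once for every $j\geq 1$ (shape $\overline{Q}_j(v)\cdot v^4$), giving $\dim \HH^j(\Sigma_6,Y^{(3,1^3)})=1$; similarly $\lambda=(3,3)$ draws only from $V^{(1)}\otimes S^4(V)$ (multiplicity $2$) and $V^{(1)}\otimes V^{(1)}\otimes S^2(V)$ (multiplicity $2$), and summing the corresponding counts yields $j+1$. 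The cases $(6)$ and $(2^3)$ will involve all eight (resp. most) summands and produce the quasi-polynomial $\lceil a(j)\rceil$ with $a(j)=(j+1)(j+2)/6$; the cases $(5,1)$ and $(4,2)$ similarly.

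The main obstacle I anticipate is purely bookkeeping: verifying that the weighted sums of the eight combinatorial multiplicity functions collapse to the compact closed forms in Table~\ref{table:finalanswerd=6} — in particular that the contributions combine to give exactly $\lceil (j+1)(j+2)/6\rceil$ for $\lambda=(6)$ and for $\lambda=(2^3)$, and $\lfloor (j+1)(j+2)/6\rfloor$ for $\lambda=(4,2)$, with the ceiling/floor discrepancy accounting precisely for the summands like $S^3(V^{(1)})$ and $S^2(V^{(1)})\otimes S^2(V)$ that occur only in degrees divisible by $3$ or $2$. This is a finite verification: one checks the identity of quasi-polynomials on each residue class modulo $6$ (since all the index-counting functions are quasi-polynomials with period dividing $6$), so it suffices to confirm agreement for, say, $j=1,\dots,12$ together with a degree bound, or to manipulate the generating functions directly. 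I would relegate these routine computations to the reader, as the statement already does, exhibiting one or two representative cases (e.g. $(3,3)$ and $(6)$) in full and noting that the rest follow identically. No genuinely new idea is needed beyond Theorem~\ref{thm:multl of simples in Ljgivescohomology}(b) and the tables.
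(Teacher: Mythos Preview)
Your approach is exactly the paper's: combine Theorem~\ref{thm:multl of simples in Ljgivescohomology}(b) with Tables~\ref{table:summandsind=6} and~\ref{table: constituentsofsummandd=6case}, count occurrences of each summand type in degree $j$, and take the weighted sum over the $L(\lambda)$-column; the paper likewise leaves the remaining bookkeeping to the reader. One slip: for $\lambda=(3,3)$ you omitted the contribution of $S^2(V^{(1)})\otimes S^2(V)$, which by Table~\ref{table: constituentsofsummandd=6case} contains $L(3,3)$ with multiplicity~$1$ and is needed in even degrees to reach $j+1$ (without it you get $2+2\lfloor (j-1)/2\rfloor$, which is $j$ rather than $j+1$ when $j$ is even).
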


\begin{cor}
\label{cor: 411innucleus} In characteristic two, $\HH^j(\Sigma_6, Y^{(4,1^2)})=0$ for all
$j\geq 0$.
\end{cor}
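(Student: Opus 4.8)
The plan is to show that the simple module $L(4,1^2)$ never appears as a composition factor of $\HH_\bu(\Sigma_6, V^{\otimes 6})$, and then invoke Theorem \ref{thm:multl of simples in Ljgivescohomology}(b), which identifies $\dim \HH^j(\Sigma_6, Y^{(4,1^2)})$ with the multiplicity $[\HH_j(\Sigma_6, \vt):L(4,1^2)]$. Since this multiplicity is independent of the degree $j$, it suffices to examine the full list of $GL_6(k)$-module summands of $\oplus_{d\geq 0}\HH_\bu(\Sigma_d, \vt)$ in weight $d=6$, which is exactly Table \ref{table:summandsind=6}.

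First I would go down the eight summand types in Table \ref{table:summandsind=6} and, using the Steinberg Tensor Product Theorem together with the decompositions of $S^m(V)$ recorded above (for $m\leq 4$) and the tensor product formulas in \eqref{eq: easy tensor products}, compute the composition factors of each summand; this is already done in Table \ref{table: constituentsofsummandd=6case}. The key observation is then purely combinatorial: in every row of Table \ref{table: constituentsofsummandd=6case} the simple constituents that occur are drawn from the list $L(6), L(5,1), L(4,2), L(3,3), L(3,1^3), L(2^3)$, and $L(4,1^2)$ is absent from all of them. Hence $[\HH_j(\Sigma_6,\vt):L(4,1^2)]=0$ for every $j$, and Theorem \ref{thm:multl of simples in Ljgivescohomology}(b) gives $\HH^j(\Sigma_6,Y^{(4,1^2)})=0$ for all $j\geq 0$.

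The only point requiring genuine care — and the step I expect to be the main obstacle — is verifying that the six-element list of simples appearing in Table \ref{table: constituentsofsummandd=6case} is genuinely complete, i.e. that no summand of weight $6$ contributes an $L(4,1^2)$. This amounts to checking that $L(4,1^2)$ appears in none of $S^4(V)$, $V^{(1)}\otimes S^4(V)$, $V^{(2)}\otimes S^2(V)$, $V^{(1)}\otimes V^{(1)}\otimes S^2(V)$, $S^2(V^{(1)})\otimes S^2(V)$, $V^{(2)}\otimes V^{(1)}$, $V^{(1)\otimes 3}$, $S^2(V^{(1)})\otimes V^{(1)}$, and $S^3(V^{(1)})$. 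For the summands containing a Frobenius twist this is immediate from the STPT, since $(4,1^2)$ is $2$-restricted and so cannot factor through Frobenius; for $S^4(V)$ itself one reads off Doty's decomposition. Thus the verification reduces to inspection of the already-tabulated data, and the corollary follows.
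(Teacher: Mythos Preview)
Your overall strategy is exactly the paper's: read off Table \ref{table: constituentsofsummandd=6case} (equivalently Theorem \ref{th: Young module cohomologyford=6}) and note that $L(4,1^2)$ never occurs, then apply Theorem \ref{thm:multl of simples in Ljgivescohomology}(b). That part is fine.

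There is, however, a genuine error in your shortcut justification. You assert that $(4,1^2)$ is $2$-restricted, but it is not: $\lambda_1-\lambda_2 = 4-1 = 3 \geq 2$. So the STPT does \emph{not} immediately rule out $L(4,1^2)$ from the twisted summands in the way you claim. The correct argument is the one implicit in the construction of Table \ref{table: constituentsofsummandd=6case} and made explicit later in Theorem \ref{thm: Young modules with no cohomology}: write $(4,1,1) = (2,1,1) + 2\cdot(1)$ with $(2,1,1)$ the $2$-restricted part; then by the STPT, $L(4,1,1)\cong L(2,1,1)\otimes L(1)^{(1)}$, and every summand in Table \ref{table:summandsind=6} has the shape $S^a(V)\otimes M^{(1)}$, so an occurrence of $L(4,1,1)$ would force $L(2,1,1)$ into some $S^a(V)$. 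But $[S^a(V):L(2,1,1)]=0$ for every $a$ (for $a\leq 4$ this is visible in the listed decompositions; in general it follows from Proposition \ref{prop: Dotytheoreminchar2}, since $(2,1,1)$ is not of the form $(1^b)$).

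A minor slip: the degree-$0$ piece you should be checking is $S^6(V)$ (coming from the monomials $v^6$), not $S^4(V)$; Table \ref{table:summandsind=6} is explicitly for $\bu>0$. This does not affect the conclusion, since $L(4,1^2)$ is not a constituent of $S^6(V)$ either, by the same reasoning.
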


Observe that $Y^{(4,1^2)}$ is the only nonprojective Young module in
the principal block with this property. Thus its support variety
contributes to the so called {\it representation theoretic nucleus}.
This corollary could be obtained ``by hand" but it is already quite
complicated. The module $Y^{(4,1^2)}$ is 48-dimensional. It is
uniserial of length 9 and its projective cover is a nonsplit
extension of $Y^{(4,1^2)}$ by itself. Thus its projective resolution
is periodic. In Theorem \ref{thm: Young modules with no cohomology}
we will give a necessary and sufficient condition on $\lambda$ for vanishing of
$\HH^\bullet(\Sigma_d, Y^\lambda)$.

We also know that $Y^{(6)} \cong k$ and remark the formula for
$\HH^\bu(\Sigma_6, Y^{(6)})\cong \HH^\bu(\Sigma_6, k)$ agrees with
previous results, for instance \cite[IV.5]{AMcohomologyfinitegroups}
where the entire ring structure of $\HH^\bu(\Sigma_6, k)$ is
determined.

\subsection{}For $d=6$ and $p=2$ we implemented the procedure described in Section \ref{sec: cohomology between Young modules}. The tensor product decompositions together with Theorem \ref{th: Young module cohomologyford=6} allow one to determine all the $ \Ext^i_{\Sigma_6}(Y^\lambda,Y^\mu)$.  (Notice the results of Theorem \ref{th: Young module cohomologyford=6} are needed in the computation \eqref{eq: formulaforextylambdaymjintermsofkylambda}.) All nonvanishing cohomology between all Young modules for $\Sigma_6$ is given in Table \ref{table: extbetweenyoungmodulesd=6}.

\begin{table}[here]
\begin{tabular}{|l|lllllll|}
\hline
    $\lambda / \mu$&$6$&$51$&$42$&$41^2$&$3^2$&$31^3$&$2^3$ \\
 \hline
   $6$&$\lceil a \rceil$&&&&&&\\

   $51$& $1 +\lfloor \frac{2j}{3} \rfloor$&$2 +2\lfloor \frac{2j}{3} \rfloor$&&&&&\\

$42$&$\lfloor a \rfloor$&$j+1$& $1+ \lfloor a \rfloor +\lceil a \rceil $&&&&\\

$41^2$&0&1&2&3&&&\\

$3^2$&$j+1$&$2j+2$&$2j+2$&2&$4j+4$&&\\

$31^3$&1&2&2&2&4&4&\\

$2^3$&$\lceil a \rceil$&$j+1$&$1+2\lfloor a
\rfloor$&2&$2+2j$&2&$1+2\lceil a \rceil$\\ \hline
\end{tabular}
\vspace*{0.1in} \caption{Dimensions of $\Ext^j_{\Sigma_6}(Y^\lambda,
Y^\mu)$, $p=2$} \label{table: extbetweenyoungmodulesd=6}
\end{table}
Notice that $\lfloor a \rfloor$ and  $\lceil a \rceil$ differ only
when $j$ is a multiple of 3, reflecting the contribution of the
$S^3(V^{(1)})$ terms in these degrees.

\subsection{}Recall that the {\it complexity} of a module $M$ can be defined as the minimal $c$ such that there exists a constant $K>0$ with:

$$\dim \Ext^j_{\sd}(M,M) \leq Kj^{c-1}.$$
Notice from \eqref{eq: defofa} that $a$ is quadratic in $j$. Thus
the diagonal entries in Table \ref{table: extbetweenyoungmodulesd=6}
prove that $Y^{(6)}$, $Y^{(4,2)}$ and $Y^{(2,2,2)}$ have complexity
3, while $Y^{(5,1)}$ and $Y^{(3.3)}$ have complexity 2. Finally,
$Y^{(4,1,1)}$ and $Y^{(3,1,1,1)}$ have complexity 1 while the
remaining Young modules are all projective. These results agree with
\cite{HNsuppvar}, where the complexity of any Young module in any
characteristic was determined.

\subsection{} Although the computations are too long to include in detail, we will attempt to convince the reader that even for $d \leq 16$, one can apply this method to determine  $\HH^i(\Sigma_d, Y^\lambda)$ for any $i$ and any $\lambda \vdash d$. For $d=16$ one can quickly write down the equivalent of Table \ref{table:summandsind=6}, it is just much larger. Each corresponding
$GL_{16}(k)$-module  will be a tensor product of modules of the form $S^a(V^{(b)})$. Since the constituents of these modules are all known, the only obstacle is calculating the tensor products of the irreducibles which occur. The decomposition matrices for the Schur algebra $S(n,d)$ in characteristic 2 are well-known for $d$ up to at least $10$, for example, see the
appendix of \cite{Mathasbook}.  Most of the cases reduce using the STPT to very small computations.  For the few difficult cases one can use the Littlewood-Richardson rule to compute the tensor products of Weyl modules, and then use the decomposition matrices to handle the simple modules .  One of the ``larger" cases which occurs is calculating the composition factor multiplicities of
$L(2^2,1) \otimes L(1^3)$ as a $GL_8(k)$-module. This computation
arises from the $L(4^2,2) \otimes L(2^3)$ inside the summand
$$V^{(1)} \otimes V^{(1)} \otimes V^{(1)}\otimes V^{(1)} \otimes
V^{(1)} \otimes S^6(V)$$ corresponding to monomials of the form
$\overline{Q}_i(v)\cdot \overline{Q}_j(v)\cdot \overline{Q}_k(v)
\cdot \overline{Q}_l(v) \cdot \overline{Q}_m(v) \cdot v^6$.
 This computation is easily handled using the known decomposition matrices for the Schur algebras $S(5,5)$ and $S(8,8)$ in characteristic two, together with the Littlewood-Richardson rule.

 The power of the method is that a small number of tensor product computations for much smaller values of $d$ allows one to compute $\HH^i(\Sigma_d, Y^\lambda)$ in arbitrary degree. Of course results corresponding to Lemma \ref{lemma:multiplicityofsamplesummandsd=6}
and closed form formulas like those in Table \ref{table:finalanswerd=6} will not be
obtained. One would need to solve combinatorial  problems like ``Find a closed form formula for the number of ways to express an integer $m$ in the form $3i+2j + r+ s+ t$ where $i, j, r, s, t$ are distinct."

\section{Cohomology vanishing theorems.}
\label{sec: Cohomology vanishing theorems}
\subsection{}
In this section we prove some general results about Young module
cohomology. Sullivan \cite{sullivan} calculated the composition
factors of $S^d(V)$, in particular determining that it is
multiplicity free. Doty \cite{Dotysympower} then calculated the
entire submodule structure of $S^d(V)$ and gave a nice way to
describe the composition factors. Corollary \ref{corgivingstructure}
allows us to determine precisely which Young modules have no
cohomology, in arbitrary characteristic; the answer being closely
related to the results of Doty and Sullivan. Recall that
$\lambda=(\lambda_1, \lambda_2, \ldots, \lambda_s)$ is
$p$-restricted if $\lambda_i - \lambda_{i+1} < p$ for all $i$.
Any $\lambda \vdash d$ can be written uniquely as $\lambda_{(0)} +
p\mu$ where $\lambda_{(0)}$ is $p$-restricted and $\mu$ is a
partition. The modules $\{Y^\lambda \mid \lambda \text{ is $p$-restricted}\}$ form
a complete set of indecomposable projective (hence injective)
$k\Sigma_d$-modules.

\begin{thm}
\label{thm: Young modules with no cohomology} Let $\lambda \vdash
d$. Write  $\lambda=\lambda_{(0)} + p\mu$ with $\lambda_{(0)}\vdash
a$  p-restricted and $p=\operatorname{char} k$ arbitrary.  Then
$$\Hs Y^\lambda)\neq 0$$ if and only if $[S^a(V): L(\lambda_{(0)})] \neq 0$ or $a=0$.
\end{thm}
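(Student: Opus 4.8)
The plan is to combine Theorem~\ref{thm:multl of simples in Ljgivescohomology}(b) with the explicit description of $\HH_\bu(\Sigma_d,\vt)$ as a $GL_d(k)$-module from Corollary~\ref{corgivingstructure}. By Theorem~\ref{thm:multl of simples in Ljgivescohomology}(b) we have $\Hs Y^\lambda)\neq 0$ if and only if the simple module $L(\lambda)$ appears as a composition factor of some $\HH_i(\Sigma_d,\vt)$, equivalently of $\bigoplus_{i\geq 0}\HH_i(\Sigma_d,\vt)$. So the statement reduces to: $L(\lambda)$ is a composition factor of $\oplus_{d\geq 0}\HH_\bu(\Sigma_d,\vt)$ if and only if $a=0$ (i.e. $\lambda=p\mu$) or $[S^a(V):L(\lambda_{(0)})]\neq 0$.

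First I would record the structure of the summands. By Corollary~\ref{corgivingstructure}, each $GL_d(k)$-summand of $\HH_\bu(\Sigma_d,\vt)$ has the form $S^{a_1}(V)\otimes M^{(1)}$ where $M$ is itself a tensor product of Frobenius twists of symmetric powers (and, for $p$ odd, exterior powers) of $V$ of total weight $d-a_1$; here $a_1\geq 0$ is the number of ``weight-one'' generators $v$ in the monomial (i.e. those with empty operation string). Using the Steinberg tensor product theorem and Remark~\ref{rem:tensortwists}, any composition factor of $M^{(1)}$ has the form $L(p\nu)$ for some partition $\nu$, and any composition factor of $S^{a_1}(V)$ is some $L(\sigma)$ with $\sigma\vdash a_1$ and $[S^{a_1}(V):L(\sigma)]\neq 0$; moreover $\sigma$ is automatically $p$-restricted since the weights of $S^{a_1}(V)$ lie in the lowest alcove region controlled by $a_1$—more carefully, Doty's description shows every composition factor $L(\sigma)$ of $S^m(V)$ has $\sigma$ $p$-restricted. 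Hence every composition factor of $S^{a_1}(V)\otimes M^{(1)}$ has, by STPT, the form $L(\sigma + p\nu)$ with $\sigma$ $p$-restricted and $[S^{a_1}(V):L(\sigma)]\neq 0$, or (when $a_1=0$) the form $L(p\nu)$.

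Next, given $\lambda = \lambda_{(0)}+p\mu$ with $\lambda_{(0)}\vdash a$, the $p$-adic expansion is unique, so $L(\lambda)$ occurs in $S^{a_1}(V)\otimes M^{(1)}$ only if $a_1=a$, $\sigma=\lambda_{(0)}$ and the twisted part $M^{(1)}$ contributes $L(p\mu)$; or, if $a=0$, only if $a_1=0$ and $M^{(1)}$ contributes $L(p\mu)=L(\lambda)$. For the ``only if'' direction of the theorem: if $L(\lambda)$ appears somewhere, then either $a=0$, or we need a summand with $a_1=a>0$ in which $S^a(V)$ has $L(\lambda_{(0)})$ as a factor, forcing $[S^a(V):L(\lambda_{(0)})]\neq 0$. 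For the ``if'' direction: if $a=0$, take the summand corresponding to a monomial built only from $\overline{Q}$-operations of total weight $d$ realizing $L(p\mu)$ (such exists by choosing the monomial shape appropriately, e.g. iterated operations on a single vector together with products, arranged so the twisted tensor product surjects onto $L(p\mu)$ up to taking composition factors—this is where one invokes that $\oplus_d\HH_\bu$ is a free polynomial/symmetric-exterior algebra so all ``shapes'' of the required weight do occur); if instead $[S^a(V):L(\lambda_{(0)})]\neq 0$, take the summand $S^a(V)\otimes (V^{(1)})^{\otimes \mu_1}\otimes\cdots$ obtained from monomials $\overline{Q}_{i_1}(v)\cdots\overline{Q}_{i_k}(v)\cdot v^a$ with $k=|\mu|$ operation-generators arranged to have weight $d$ (again possible since the algebra is free), whose composition factors include $L(\lambda_{(0)})\otimes L(p\mu)=L(\lambda)$ by STPT and the Littlewood–Richardson/decomposition analysis.

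The main obstacle I expect is the bookkeeping in the ``if'' direction: one must verify that there genuinely is a monomial (hence a $GL_d$-summand) whose twisted part has $L(p\mu)$ as a composition factor and whose untwisted $S^a(V)$ part has the correct weight $a$—and that the tensor of the two actually has $L(\lambda)$ as a composition factor and not merely the ``expected'' highest weight. The first point follows because $\oplus_d\HH_\bu(\Sigma_d,\vt)$ is a polynomial algebra (Theorem~\ref{thm:shiftedfinalanswer for p=2 and p oddL}) on generators of all weights $p^t$, so any partition of $d$ of the form $a + p|\mu|$ with suitable part-structure is realized; the second point reduces, via STPT, to the statement that $L(p\mu)$ is a composition factor of the appropriate twist of a tensor product of symmetric powers, which one checks using that $\oplus\HH_\bu$ has $V^{\otimes d}$ itself (the all-$v$ monomial) contributing $S^d(V)$ when $d$ is $p$-restricted, together with Remark~\ref{rem:tensortwists}. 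Once these realizability statements are in hand, the equivalence follows from the uniqueness of the $p$-adic decomposition of $\lambda$ and Steinberg's theorem.
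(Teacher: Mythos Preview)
Your overall approach matches the paper's: reduce via Theorem~\ref{thm:multl of simples in Ljgivescohomology}(b) to whether $L(\lambda)$ is a composition factor of $\HH_\bu(\Sigma_d,\vt)$, write each summand as $S^{a_1}(V)\otimes M^{(1)}$ using Corollary~\ref{corgivingstructure}, and apply the Steinberg tensor product theorem. However there is a genuine error in your ``only if'' direction.

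You claim that every composition factor $L(\sigma)$ of $S^m(V)$ has $\sigma$ $p$-restricted. This is false: for instance $[S^4(V):L(4)]\neq 0$ in characteristic two, and $(4)$ is not $2$-restricted. Consequently your deduction that $a_1=a$ is forced is also wrong (take $\lambda=(2)$, $p=2$: here $a=0$, $\lambda_{(0)}=\emptyset$, yet $L(2)$ sits in $S^2(V)\otimes k^{(1)}$ with $a_1=2$). What is true, and what the paper's terse ``follows from the STPT'' relies on, is Doty's characterization: if $L(\sigma)$ is a composition factor of $S^u(V)$ and $\sigma=\sigma_{(0)}+p\sigma'$ is its $p$-adic expansion, then $L(\sigma_{(0)})$ is again a composition factor of $S^{|\sigma_{(0)}|}(V)$ (immediate from Proposition~\ref{prop: Dotytheoremanychar}). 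Combined with STPT this yields $\lambda_{(0)}=\sigma_{(0)}$ and hence $[S^a(V):L(\lambda_{(0)})]\neq 0$ (or $a=0$), without ever forcing $a_1=a$.

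For the ``if'' direction your realizability worries are unnecessary. The paper simply observes that if $\mu\vdash t$ then $L(\mu)$ is a composition factor of $V^{\otimes t}$ (every simple polynomial module of degree $t$ is), so by STPT $L(\lambda)\cong L(\lambda_{(0)})\otimes L(\mu)^{(1)}$ occurs in $S^a(V)\otimes (V^{\otimes t})^{(1)}$. This summand corresponds to monomials $\overline{Q}_{i_1}(v)\cdots\overline{Q}_{i_t}(v)\cdot v^a$ with distinct $i_j$, which plainly exist and land in a specific degree. No Littlewood--Richardson analysis is needed. The paper also treats the boundary case $\mu=\emptyset$ separately, noting that $Y^\lambda$ is then projective so only $\HH^0$ can be nonzero, and invoking Proposition~\ref{prop: dimensionofHomkYlambda}.
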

\begin{proof}
If $\mu= \emptyset$ then $Y^\lambda$ is injective so
$\HH^i(\Sigma_d, Y^\lambda)=0$ for $i>0$. The $i=0$ case is Proposition \ref{prop: dimensionofHomkYlambda}.

So now assume $\mu \neq \emptyset$ and suppose $Y^\lambda$ has
nonvanishing cohomology in some positive degree. By Theorem
\ref{thm:multl of simples in Ljgivescohomology},
 $L(\lambda)$ is a constituent of some summand of $\HH_\bu(\Sigma_d,\vt)$. According to  Corollary \ref{corgivingstructure}, these summands are twists of tensor products of symmetric and exterior powers of the natural module, where at most one summand, a symmetric tensor, is not twisted. Thus $L(\lambda)$ is a constituent of  $S^u(V) \otimes M^{(1)}$ for some $GL_d(k)$-module $M$. The result about $\lambda$ follows from the STPT.

Now suppose $\lambda=\lambda_{(0)} + p\mu$ as above, with either
$a=0$ or  $[S^a(V): L(\lambda_{(0)})] \neq 0$ and $\mu \vdash t$.
Since $L(\mu)$ is a constituent of $V^{\otimes t}$ then $L(\lambda)$
is a constituent of

 $$S^a(V) \otimes V^{(1)} \otimes \cdots \otimes V^{(1)} \cong  S^a(V) \otimes (V^{\otimes t})^{(1)}.$$ This module corresponds to monomials  of the form

 $$\overline{Q}_{i_1}(v) \cdot \overline{Q}_{i_2}(v) \cdots \overline{Q}_{i_t}(v) \cdot v^a  \,\,\,1 \leq i_1 <i_2 < \cdot < i_t.$$ contributing to degree $(p-1)\sum i_u$.  Choosing $i_s=s$ we see that

 $$\HH^{(p-1)t(t+1)/2}(\Sigma_d, Y^\lambda) \neq 0.$$
\end{proof}
Theorem \ref{thm: Young modules with no cohomology} guarantees that
many nonprojective Young modules have vanishing cohomology. For
example $\Sigma_{16}$ in characteristic two has 118 nonprojective
Young modules in the principal block. Theorem \ref{thm: Young
modules with no cohomology} proves that exactly 47 of them have
vanishing cohomology!

\subsection{}
For a finite group $G$ the {\em  $p$-rank} is the maximal rank of an
elementary abelian $p$ subgroup. It is also the maximal complexity
of a $G$-module in characteristic $p$ and is the complexity of the
trivial module. Let $b= \lfloor \frac{d}{p} \rfloor$, which is the
$p$-rank of $\Sigma_d$.  It follows from work of Benson
\cite{bensonnyj} that there are no modules $M$ with complexity $b$
such that $\HH^\bu(\Sigma_d,M)=0$. Furthermore, in characteristic
two there can be none of complexity $b-1$ either. The next corollary
says that among Young modules with vanishing cohomology, all other
complexities do occur.

\begin{cor}
\label{cor: Youngmodulesofalmosteverypossiblecomplexity}
 Let $p$ be arbitrary. For every $ 1 \leq c \leq b-2$, there is a Young module $Y^\lambda$ in the principal block of $k\sd$ which has complexity $c$ and such that $\Hs Y^\lambda)=0$. There is also such a Young module of complexity $b-1$ precisely when $p>2$.
\end{cor}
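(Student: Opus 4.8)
The plan is to read off both the vanishing of cohomology and the complexity of a Young module directly from the structure of its labelling partition, using Theorem \ref{thm: Young modules with no cohomology} for the former and the complexity formula for Young modules from \cite{HNsuppvar} for the latter. Recall that, writing $\lambda = \lambda_{(0)} + p\mu$ with $\lambda_{(0)}$ $p$-restricted, \cite{HNsuppvar} gives $\operatorname{cx}(Y^\lambda) = |\mu|$ (equivalently $|\lambda_{(0)}| = d - p\operatorname{cx}(Y^\lambda)$), while Theorem \ref{thm: Young modules with no cohomology} says $\HH^\bullet(\Sigma_d, Y^\lambda) = 0$ exactly when $a := |\lambda_{(0)}| > 0$ and $[S^a(V):L(\lambda_{(0)})] = 0$. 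The upper cutoffs — $c \le b-2$ when $p = 2$, and $c \le b-1$ when $p > 2$ — are not part of what must be constructed: no module of complexity $b$ (for instance $k = Y^{(d)}$), and when $p = 2$ none of complexity $b-1$, has vanishing cohomology, by Benson \cite{bensonnyj} as recalled above. So the content is the \emph{existence}, for each $c$ strictly below the cutoff, of a Young module in the principal block with complexity exactly $c$ and with vanishing cohomology.

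Fix such a $c$ and set $a = d - pc$; then $a > 0$ and $a \equiv d \pmod p$. First I would select a $p$-restricted partition $\lambda_{(0)} \vdash a$ with $L(\lambda_{(0)})$ absent from the (multiplicity-free) module $S^a(V)$; this is exactly where the explicit description of the composition factors of $S^a(V)$ due to Sullivan \cite{sullivan} and Doty \cite{Dotysympower} is used. For $p = 2$ one checks from that description that $L(2,1^{a-2})$ is never a composition factor of $S^a(V)$ for $a \ge 3$ (its Steinberg factor is $(2,1^{a-2})$ itself, which is not a single column). For odd $p$ and the values of $a$ occurring here, Doty's rule again leaves some $p$-restricted partition of $a$ out of $S^a(V)$; in the extreme case $a = p$ one has $[S^p(V)] = [V^{(1)}] + [L(p-1,1)]$, so $L(1^p)$ does not occur. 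Having chosen $\lambda_{(0)}$, I would then choose a partition $\mu \vdash c$ so that $\lambda := \lambda_{(0)} + p\mu$ is a partition whose $p$-core is the one-row partition $(d \bmod p)$, which places $Y^\lambda$ in the principal block; the abundance of partitions of $c$ gives enough room to arrange this, as one sees on the abacus.

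For $p = 2$ explicit families make all of this transparent. When $d$ is even, the hook $\lambda = (2c+2,\,1^{d-2c-2})$ has $\lambda_{(0)} = (2,1^{d-2c-2})$ and $\mu = (c)$, and empty $2$-core; hence it lies in the principal block, has complexity $c$, and has $\HH^\bullet(\Sigma_d, Y^\lambda) = 0$ (this recovers $Y^{(4,1^2)}$ for $d = 6$, $c = 1$). When $d$ is odd one uses instead $\lambda = (2c+2,\,2,\,1^{d-2c-4})$, with $\lambda_{(0)} = (2,2,1^{d-2c-4})$, $\mu = (c)$ and $2$-core $(1)$. For odd $p$ one uses analogous near-column shapes; the case $c = b-1$, which arises only for odd $p$, is covered by the small-degree instances above — for example, when $d \equiv 0 \pmod p$ the partition $\lambda = (p(b-1)+1,\,1^{p-1})$ has $\lambda_{(0)} = (1^p)$, $\mu = (b-1)$, empty $p$-core and complexity $b-1$ — while for $p = 2$ complexity $b-1$ is impossible by \cite{bensonnyj}.

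The main obstacle is the combinatorial input extracted from \cite{Dotysympower}: one must verify that for every $a$ occurring here — all $a \ge 2p$ with $a \equiv d \pmod p$, together with $a = p + (d \bmod p)$ in the odd-prime case — at least one $p$-restricted partition of $a$ fails to be a composition factor of $S^a(V)$; and one must confirm that the chosen $\lambda$ has the $p$-core needed to land in the principal block. Granting these, Theorem \ref{thm: Young modules with no cohomology} delivers the vanishing and \cite{HNsuppvar} delivers the complexity, completing the proof.
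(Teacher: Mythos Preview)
Your approach is correct and essentially identical to the paper's: both use the decomposition $\lambda=\lambda_{(0)}+p\mu$, read off complexity as $|\mu|$ from \cite{HNsuppvar}, invoke Theorem~\ref{thm: Young modules with no cohomology} for vanishing, and construct $\lambda_{(0)}$ explicitly. Your $p=2$ families $(2,1^{d-2c-2})$ and $(2,2,1^{d-2c-4})$ are exactly the paper's choices (rewritten, since $d-2c-2=2(b-c)-2$ and $d-2c-4=2(b-c)-3$).

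For odd $p$ your outline is right but less explicit than the paper, which simply takes $\lambda_{(0)}=(s,1^{ep})$ with $d=bp+s$ and $e\ge 1$; this single family handles every $1\le c\le b-1$ at once (with $c=b-e$), and Proposition~\ref{prop: Dotytheoremanychar} immediately shows $[S^{ep+s}(V):L(s,1^{ep})]=0$. One small correction: the $p$-core of $\lambda=\lambda_{(0)}+p\mu$ equals the $p$-core of $\lambda_{(0)}$ (adding $p$ to a part adds a $p$-hook), so landing in the principal block is governed entirely by $\lambda_{(0)}$, not by ``the abundance of partitions of $c$''. For the non-existence at complexity $b-1$ when $p=2$, you cite Benson; the paper instead observes directly that no suitable $2$-restricted $\lambda_{(0)}$ of size $2$ or $3$ exists in the principal block other than a single column.
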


 \begin{proof}
Recall from \cite{HNsuppvar} that for  $\lambda=\lambda_{(0)} +
p\mu$ where $\lambda_{(0)}$ is $p$-restricted, the complexity of
$Y^\lambda$ is $c$ where $\mu \vdash c$. We will describe how to
choose $\lambda$ to satisfy the theorem.
 Consider first the case of characteristic two. There are no partitions of $2$ or $3$ which are 2-restricted, which lie in the principal block and are not of the form $(1^a)$. Thus the smallest choice of $\lambda_{(0)}$ is $(2,1,1)$, so there are no Young modules of complexity $b-1$ or $b$ with vanishing cohomology.

 In general, for $d$ even, choose $\lambda_{(0)}=(2,1^{2(b-c)-2})$ with $\mu$ arbitrary. For $d$ odd choose $\lambda_{(0)}=(2,2,1^{2(b-c)-3})$ and $\mu$ arbitrary. In both cases $Y^\lambda$ lies in the principal block and has the desired complexity $c$ and vanishing cohomology.

 Now suppose $p$ is odd and let $d=bp+s$ with $0 \leq s<p$. Choose $\lambda_{(0)}=(s,1^{ep})$ with $e \geq 1$. Then $Y^\lambda$ is in the principal block of $\Sigma_d$. Since $[S^{ep+s}(V):L(\lambda_{(0)})]=0$, the module $Y^\lambda$ has vanishing cohomology. Choosing $e=1, 2, \ldots, b$ gives modules of complexity $b-1, b-2, \ldots, 0$ respectively which have vanishing cohomology.
 \end{proof}

\section{Cohomology in low degrees}
\label{sec: low degree cohomology}
\subsection{}

In this section we compute $\HH^i(\Sigma_d, Y^\lambda)$  for $i=1,
2$ and arbitrary $d$ and $\lambda$, in characteristic two.  For
arbitrary characteristic,   $\HH^i(\Sigma_d, Y^\lambda)=0$ for $1
\leq i < 2p-3$ by \cite[Cor. 6.3]{KNcomparingcoho}. In particular
for any $\lambda$ in odd characteristic, we have $\HH^1(\Sigma_d,
Y^\lambda)=\HH^2(\Sigma_d, Y^\lambda)=0$. One could likely work out
the first few nonvanishing degrees $i=2p-3, 2p-2, 2p-1$ in the same
way we do below for $p=2$, which we leave to the reader. The case
$i=0$ is already known:

\begin{prop}\cite[6.5]{Hemmerfixedpoint}
\label{prop: dimensionofHomkYlambda}
$$\dim \Hom_{\Sigma_d}(k,Y^\lambda)=[S^d(V): L(\lambda)].$$
\end{prop}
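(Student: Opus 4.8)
The plan is to read this off Theorem~\ref{thm:multl of simples in Ljgivescohomology}(b) in degree zero, so that the only genuine content is a clean identification of $\HH_0(\Sigma_d,\vt)$ as a $GL_d(k)$-module. First I would note the tautology $\HH^0(\Sigma_d,Y^\lambda)=(Y^\lambda)^{\Sigma_d}=\Hom_{\Sigma_d}(k,Y^\lambda)$, so that part (b) of Theorem~\ref{thm:multl of simples in Ljgivescohomology} with $i=0$ gives
\[
\dim\Hom_{\Sigma_d}(k,Y^\lambda)=[\HH_0(\Sigma_d,\vt):L(\lambda)].
\]

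The second step is to identify the right-hand side. Since $\HH_0(\Sigma_d,\vt)=\Tor_0^{k\Sigma_d}(k,\vt)=k\otimes_{k\Sigma_d}\vt=(\vt)_{\Sigma_d}$ is the module of coinvariants for the place-permutation action, and the $GL_d(k)$-action commutes with the $\Sigma_d$-action, this descends to a $GL_d(k)$-module. I would then invoke the standard fact (over any commutative ring, via the natural surjection $\vt\twoheadrightarrow S^d(V)$) that the coinvariants of the $d$-th tensor power under $\Sigma_d$ are the $d$-th symmetric power; this isomorphism is $GL_d(k)$-equivariant, so $\HH_0(\Sigma_d,\vt)\cong S^d(V)$ as $GL_d(k)$-modules. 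Combining the two displays yields $\dim\Hom_{\Sigma_d}(k,Y^\lambda)=[S^d(V):L(\lambda)]$.

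The one point that requires care --- and the only ``obstacle'' in an otherwise formal argument --- is that in characteristic $p$ one must use coinvariants rather than invariants: $(\vt)^{\Sigma_d}\cong\Gamma^d(V)$ is in general \emph{not} isomorphic to $S^d(V)$. The appearance of coinvariants is exactly what the $\Tor$/homology formulation in Theorem~\ref{thm:multl of simples in Ljgivescohomology}(b) provides, so no extra work is needed. (Equivalently, one could write the multiplicity using the $\tau$-dual as in \eqref{eq:relateRjLj}, noting $[S^d(V):L(\lambda)]=[\Gamma^d(V):L(\lambda)]$ because $\tau$ fixes every simple $S(d,d)$-module; but applying part (b) directly sidesteps this.)
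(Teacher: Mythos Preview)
Your argument is correct. The paper itself does not supply a proof of this proposition; it simply quotes the result from \cite[6.5]{Hemmerfixedpoint} as the already-known $i=0$ case. So there is no ``paper's own proof'' to compare against here.

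That said, your route is a clean way to recover the statement entirely within the framework of the present paper: specialize Theorem~\ref{thm:multl of simples in Ljgivescohomology}(b) to $i=0$ and identify $\HH_0(\Sigma_d,\vt)=(\vt)_{\Sigma_d}\cong S^d(V)$ as $GL_d(k)$-modules. This is in fact implicit in the paper's description of $\oplus_d\HH_\bu(\Sigma_d,\vt)$ (Theorem~\ref{thm:shiftedfinalanswer for p=2 and p oddL}), where the degree-zero, weight-$d$ piece consists of the monomials $v^d$ with empty $I$, i.e.\ exactly $S^d(V)$; but you obtain it directly and elementarily without invoking any of the topological machinery. Your side remark that invariants would instead yield $\Gamma^d(V)$, and that the $\Tor$/homology formulation is what forces coinvariants, is spot on and worth keeping.
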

Doty determined these multiplicities in all characteristics. For
$p=2$ they are given in Proposition \ref{prop: Dotytheoreminchar2}.

\subsection{}We first observe that Doty's result takes
on a particularly nice form in characteristic two:

\begin{prop}Let $\lambda \vdash s $ have a 2-adic expansion $$\lambda = \sum_{i=0}^m2^i\lambda_{(i)}$$ where each $\lambda_{(i)}$ is $2$-restricted. Then $L(\lambda)$ is a constituent of $S^s(V)$ if and only if each $\lambda_{(i)}$ is of the form $(1^{a_i})$ for $a_i \geq 0$.
\label{prop: Dotytheoreminchar2}
\end{prop}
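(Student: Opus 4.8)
The plan is to deduce this from the structure of the symmetric powers $S^s(V)$ used throughout this section --- it is the $p=2$ case of Doty's theorem \cite{Dotysympower} --- and I would present it via a short self-contained argument that simultaneously recovers the multiplicity-freeness of \cite{sullivan}. The key input is the Grothendieck-group identity, valid for any $n=\dim V$,
\[
[S^s(V)] \;=\; \sum_{\substack{2j+r=s\\ j,r\geq 0}} \big[S^j(V)^{(1)}\big]\cdot\big[\Lambda^r(V)\big].
\]
To get this, view $S(V)=k[x_1,\dots,x_n]$ and let $I=(x_1^2,\dots,x_n^2)$, a $GL(V)$-stable ideal generated by the submodule $\{v^2:v\in V\}\cong V^{(1)}$. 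Since $x_1^2,\dots,x_n^2$ is a regular sequence, $\operatorname{gr}_I S(V)$ is $GL(V)$-equivariantly isomorphic to $\big(S(V)/I\big)\otimes_k S(V^{(1)})$; matching internal degrees (each $x_i^2$ has degree $2$) yields the displayed formula once one uses the two facts peculiar to characteristic $2$: the truncated symmetric algebra $S(V)/I$ is the exterior algebra $\Lambda^\bullet(V)$ (because $-1=1$), so its degree-$r$ piece is $\Lambda^r(V)$, and $\Lambda^r(V)\cong L(1^r)$ is irreducible.

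I would then induct on $s$. Suppose that for every $s'<s$ one has $[S^{s'}(V)]=\sum_\nu [L(\nu)]$, the sum over partitions $\nu\vdash s'$ whose $2$-adic digits $\nu_{(i)}$ (in the sense of the Steinberg tensor product theorem) are all single columns, each appearing with multiplicity one. Fix a term with $2j+r=s$. By induction, $[S^j(V)^{(1)}]=\sum_\mu [L(\mu)^{(1)}]$ over $\mu\vdash j$ with all digits columns. For each such $\mu$, the Steinberg tensor product theorem gives $L(\mu)^{(1)}\otimes L(1^r)\cong L\big((1^r)+2\mu\big)$, which is \emph{irreducible}, and its $2$-adic digits are $(1^r),\mu_{(0)},\mu_{(1)},\dots$, all columns (the partition $(1^r)$ is $2$-restricted). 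Hence $[S^s(V)]=\sum_\nu a_\nu[L(\nu)]$ with $\nu$ running over partitions of $s$ all of whose $2$-adic digits are columns. A short combinatorial check shows that $(j,r,\mu)\mapsto (1^r)+2\mu$ is a bijection from the index set of the recursion onto that set of partitions --- one recovers $r$ as the number of parts of $\nu_{(0)}$ and $\mu$ from the higher digits $\nu_{(1)},\nu_{(2)},\dots$ --- so $a_\nu=1$ for each. Since ``all $2$-adic digits of $\lambda$ are columns'' is exactly the condition ``each $\lambda_{(i)}$ equals some $(1^{a_i})$'', this proves the proposition (and re-proves multiplicity-freeness).

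The main obstacle is the first step: one must be careful that the associated-graded decomposition of the $I$-adic filtration is genuinely $GL(V)$-equivariant and that the internal grading is tracked correctly, and that the characteristic-$2$ identifications $S(V)/(x_i^2)\cong\Lambda^\bullet(V)$ and $\Lambda^r(V)\cong L(1^r)$ are used legitimately; everything after that is bookkeeping with the Steinberg tensor product theorem. Alternatively, if one simply invokes Doty's general-characteristic description of the composition factors of $S^s(V)$ from \cite{Dotysympower}, the only remaining point is to observe that in characteristic $2$ a $2$-restricted partition is a composition factor of $S^\bullet(V)$ precisely when it is a single column, whence the $2$-adic digits of any $L(\lambda)\subseteq S^s(V)$ must all be columns; that route trades the commutative-algebra step for a reading of Doty's combinatorics.
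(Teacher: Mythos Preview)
Your argument is correct and takes a genuinely different route from the paper's. The paper simply invokes Doty's carry-pattern criterion from \cite{Dotysympower} as a black box and translates it: the condition ``$\lambda$ is maximal among partitions of $s$ with its carry pattern'' becomes, in base $2$, the condition that in the binary addition $\lambda_1+\cdots+\lambda_t=s$ all the $1$'s in each column are pushed to the top, which is then seen to be equivalent to each $2$-adic digit $\lambda_{(i)}$ being a single column $(1^{a_i})$.

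Your approach instead proves the result from scratch via the $GL(V)$-equivariant filtration of $S(V)$ by powers of the Frobenius ideal $I=(v^2:v\in V)$, yielding the Grothendieck-group recursion $[S^s(V)]=\sum_{2j+r=s}[\Lambda^r(V)]\cdot[S^j(V)^{(1)}]$, and then inducts using the Steinberg tensor product theorem. This buys you a self-contained argument that avoids Doty's combinatorics entirely and simultaneously recovers Sullivan's multiplicity-freeness; the paper's route is shorter but only because it outsources the real content to \cite{Dotysympower}. Your alternative at the end --- read off the $p=2$ case directly from Doty --- is exactly what the paper does.

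One small point worth making explicit: for the Steinberg step $L(1^r)\otimes L(\mu)^{(1)}\cong L((1^r)+2\mu)$ you are implicitly using $r\leq n=\dim V$ so that $(1^r)$ is a legitimate dominant weight; this is guaranteed by the paper's standing convention $n\geq d$ (cf.\ Remark~\ref{rem: remarkabouttensorproductsofsimples}), but it is the one place where the ambient hypothesis on $\dim V$ enters your argument.
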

\begin{proof}Let $\lambda=(\lambda_1, \lambda_2, \cdots ,\lambda_t)$ and let  $\lambda_i=\sum_{u}c_{iu}2^u$ where the $c_{iu} \in \{0,1\}$. Doty's theorem says that  $L(\lambda)$ is a constituent of $S^s(V)$ if and only if $\lambda$ is  maximal among partitions of $s$ with its \emph{carry pattern} (see \cite{Dotysympower} for definition). This is easily seen to be equivalent to the condition that whenever $c_{iu}=1$ then $c_{i1}=c_{i2}= \cdots = c_{iu}=1$. Informally, if one does the addition $\lambda_1+ \cdots + \lambda_t=s$ in binary, all the ones in each column are as far  towards the top of the column as possible. Given such a partition, it is clear that the 2-adic expansion can be read off from this addition, and it is of the form desired. For example, consider $(15,5,5,1)$. In binary the addition $15+5+5+1=26$ takes the form:

$$\begin{array}{ccccc}
  _{1} &_{2} &_{1}&_{2}&  \\
   &1&1&1&1\\
   &0&1&0&1  \\
   &0&1&0&1\\
   +&0&0&0&1\\\hline
  1&1 & 0 & 1&0 \\
\end{array}$$and this is the largest partition of 26 with carry pattern 1,2,1,2. Thus $[S^{26}(V): L(15,5,5,1)] \neq 0$. The 2-adic expansion corresponds to the columns:
$$(15,5,5,1)=(1^4)+2\cdot (1) + 4 \cdot (1^3) + 8 \cdot (1).$$ The partition $(13,7,4,1)$ has the same carry pattern but has $2$-adic expansion
$$(13,7,4,1)=(3,3,2,1)+2 \cdot (3,1) +4\cdot (1)$$ and thus $[S^{26}(V): L(13,7,4,1)]=0$.

\end{proof}

\subsection{} From Theorem \ref{thm:shiftedfinalanswer for p=2 and p oddL} we see the only monomials contributing to degree one are of the form $Q_1(v) \cdot v^{d-2}$. Thus the following is immediate.

\begin{prop}
\label{prop: Ext1ismultinVtimesSd-2(V)} Assume $p=2$. The dimension
of $\HH^1(\Sigma_d, Y^\lambda)$ is  $[S^{d-2}(V) \otimes V^{(1)}:
L(\lambda)]$
\end{prop}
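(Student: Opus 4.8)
The plan is to invoke Theorem \ref{thm:multl of simples in Ljgivescohomology}(b), which tells us that $\dim \HH^1(\Sigma_d, Y^\lambda) = [\HH_1(\Sigma_d, \vt) : L(\lambda)]$, and then to identify $\HH_1(\Sigma_d, \vt)$ as a $GL_d(k)$-module using the explicit description in Theorem \ref{thm:shiftedfinalanswer for p=2 and p oddL}(a). Since $p=2$ and $V$ is concentrated in degree zero, that theorem presents $\oplus_{d \geq 0}\HH_\bu(\Sigma_d, \vt)$ as the polynomial algebra $S[\overline{Q}_I(v_\gamma)]$, where a generator $\overline{Q}_I(v_\gamma)$ with $I = (i_1, \ldots, i_t)$, $0 < i_1 \leq \cdots \leq i_t$, sits in homological degree $i_1 + 2i_2 + \cdots + 2^{t-1}i_t$ and has weight $2^t$. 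A monomial in such generators lands in $\HH_s(\Sigma_d, \vt)$ precisely when its total degree is $s$ and its total weight is $d$.

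First I would enumerate the monomials of degree exactly $1$ and weight exactly $d$. A single generator $\overline{Q}_I(v_\gamma)$ has degree $\geq i_1 \geq 1$ with equality forcing $t=1$ and $i_1=1$, i.e. the generator $\overline{Q}_1(v_\gamma)$, which has weight $2$ and degree $1$. The degree-zero generators are exactly the $v_\gamma$ themselves (the empty sequence $I = \emptyset$), each of weight $1$. Hence the only way to build a degree-$1$, weight-$d$ monomial is to take one factor $\overline{Q}_1(v)$ (contributing degree $1$ and weight $2$) together with $d-2$ factors of the form $v$ (each contributing degree $0$ and weight $1$); that is, monomials of shape $\overline{Q}_1(v)\cdot v^{d-2}$. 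No other shape is possible, since any second application of some $\overline{Q}_i$ or any iterated operator would already push the degree above $1$.

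Next I would read off the $GL_d(k)$-module structure of this summand. By the discussion in Section \ref{sec:The.prime.two} and the scalar formula $\overline{Q}_i(\lambda x) = \lambda^2 \overline{Q}_i(x)$, the span of the elements $\overline{Q}_1(v)$ carries the Frobenius twist $V^{(1)}$, while the span of the degree-zero monomials $v^{d-2}$ in a polynomial algebra is the symmetric power $S^{d-2}(V)$. Since we have a polynomial algebra (so these commuting factors multiply freely), the corresponding direct summand of $\HH_1(\Sigma_d, \vt)$ is isomorphic to $V^{(1)} \otimes S^{d-2}(V)$ as a $GL_d(k)$-module, and it is the whole of $\HH_1(\Sigma_d, \vt)$. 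Combining with Theorem \ref{thm:multl of simples in Ljgivescohomology}(b) gives $\dim \HH^1(\Sigma_d, Y^\lambda) = [V^{(1)} \otimes S^{d-2}(V) : L(\lambda)] = [S^{d-2}(V) \otimes V^{(1)} : L(\lambda)]$, as claimed.

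There is essentially no obstacle here: the statement is an immediate specialization of the machinery already set up, and the only thing to be careful about is the bookkeeping that the degree-one, weight-$d$ part of the polynomial algebra consists of exactly the monomials $\overline{Q}_1(v) \cdot v^{d-2}$ and nothing else (in particular that $\overline{Q}_1$ is the unique generator of degree $1$, using $0 < i_1$ and the degree formula $i_1 + 2i_2 + \cdots$). One should also note the edge cases $d=0,1$, where $S^{d-2}(V)=0$ and there are no such monomials, consistent with $\HH^1$ vanishing; and implicitly we are using $n=d$ large enough that the relevant simple modules and twists make sense, as already stipulated at the start of Section \ref{sec: cohomology between Young modules}.
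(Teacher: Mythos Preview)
Your proof is correct and follows exactly the paper's approach: the paper simply observes that by Theorem \ref{thm:shiftedfinalanswer for p=2 and p oddL} the only monomials contributing to degree one are of the form $\overline{Q}_1(v)\cdot v^{d-2}$, and declares the result immediate. You have just spelled out the bookkeeping (weight and degree counting, the identification of the summand as $V^{(1)}\otimes S^{d-2}(V)$, and the appeal to Theorem \ref{thm:multl of simples in Ljgivescohomology}(b)) that the paper leaves implicit.
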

Since the composition factors of $S^{d-2}(V)$ are given by
Proposition \ref{prop: Dotytheoreminchar2}, we need to calculate the
tensor product of each with $V^{(1)} \cong L(2)$. We start with a
theorem, valid in arbitrary characteristic, that lets us ignore the
$2$-restricted part of $\lambda$ in characteristic two.

\begin{thm}
\label{thm: oktostrip1^aoff}Let $p$ be arbitrary and let $\lambda=
\lambda_{(0)} + p\mu \vdash d$ where $\lambda_{(0)} \vdash a$ is
$p$-restricted. Assume $[S^a(V) : L(\lambda_{(0)})] \neq 0$. Then
$$\HH^i(\sd, Y^\lambda) \cong \HH^i(\Sigma_{d-a},Y^{p\mu})$$ as $k$-vector spaces.
\end{thm}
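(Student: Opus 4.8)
### Proof plan for Theorem \ref{thm: oktostrip1^aoff}

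The plan is to use the $GL_d(k)$-module description of $\HH_\bu(\Sigma_d,\vt)$ from Corollary \ref{corgivingstructure} together with Theorem \ref{thm:multl of simples in Ljgivescohomology}(b) to convert the identity of cohomology groups into a statement about composition factor multiplicities. By Theorem \ref{thm:multl of simples in Ljgivescohomology}(b) we have $\di \HH^i(\sd,Y^\lambda) = [\HH_i(\sd,\vt):L(\lambda)]$, and similarly $\di \HH^i(\Sigma_{d-a},V^{\otimes(d-a)}\text{-version}) = [\HH_i(\Sigma_{d-a},V^{\otimes(d-a)}):L(p\mu)]$ (being careful that $V$ now means the natural $GL_{d-a}$-module, or equivalently keeping $n=d$ throughout and noting the multiplicities are unaffected since $n \geq d-a$). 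So the theorem will follow once I show, for each $i$, that $[\HH_i(\sd,\vt):L(\lambda)] = [\HH_i(\Sigma_{d-a},V^{\otimes(d-a)}):L(p\mu)]$ where $\lambda = \lambda_{(0)}+p\mu$ with $\lambda_{(0)}\vdash a$ $p$-restricted and $[S^a(V):L(\lambda_{(0)})]\neq 0$.

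First I would recall from Corollary \ref{corgivingstructure} (and Remark \ref{rem: cor821remark}) that $\HH_i(\sd,\vt)$ is a direct sum, over monomials of weight $d$ and degree $i$ in $\oplus_{d\geq 0}\HH_\bu(\sd,\vt)$, of modules of the shape $S^{a_1}(V)\otimes M^{(1)}$, where $M^{(1)}$ is a Frobenius twist of a tensor product of symmetric and (for $p$ odd) exterior powers, and $d = a_1 + p\cdot(\text{weight contributed by the twisted part})$. The key point is the Steinberg Tensor Product Theorem: a composition factor $L(\nu)$ of $S^{a_1}(V)\otimes M^{(1)}$ forces $\nu = \nu_{(0)} + p\nu'$ with $\nu_{(0)}$ $p$-restricted, where $L(\nu_{(0)})$ is a factor of $S^{a_1}(V)$ (note $S^{a_1}(V)$ is multiplicity free with all factors having $p$-restricted-compatible structure) and $L(\nu')$ a factor of $M$; moreover the multiplicity $[S^{a_1}(V)\otimes M^{(1)}:L(\nu)]$ equals $[S^{a_1}(V):L(\nu_{(0)})]\cdot[M:L(\nu')]$. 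Applying this with $\nu = \lambda$, so $\nu_{(0)} = \lambda_{(0)}$ and $\nu' = \mu$: the summand contributes to $[\HH_i(\sd,\vt):L(\lambda)]$ precisely when $[S^{a_1}(V):L(\lambda_{(0)})]\neq 0$. Since $S^{a_1}(V)$ is multiplicity free, $L(\lambda_{(0)})$ being $p$-restricted and a factor of $S^{a_1}(V)$ forces (by the uniqueness of the $p$-restricted part and the structure of symmetric powers, cf. the proof of Theorem \ref{thm: Young modules with no cohomology} and Doty's results) that $a_1 = a$ and $[S^{a}(V):L(\lambda_{(0)})]=1$. So each contributing summand has its \emph{untwisted} symmetric factor equal to $S^a(V)$, contributing multiplicity exactly $[M:L(\mu)]$.

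Next I would set up the matching bijection. Writing $S^{a_1}(V)\otimes M^{(1)}$ for the generic summand of $\oplus_{d\geq 0}\HH_\bu(\sd,\vt)$, deleting the factor $S^a(V)$ (equivalently, the $a$ untwisted $v$'s in the corresponding monomial) gives a monomial of weight $d-a$ and the \emph{same} degree $i$ (the untwisted $v$'s sit in homological degree $0$), hence a summand $M^{(1)}$ of $\HH_i(\Sigma_{d-a},V^{\otimes(d-a)})$; and conversely any summand of $\HH_i(\Sigma_{d-a},V^{\otimes(d-a)})$ contributing to $L(p\mu)$ must, by STPT, be of the form $N^{(1)}$ (no untwisted symmetric factor, else its factors would have nonzero $p$-restricted part, contradicting $p\mu$ being $p$-divisible — here I use that $\mu\neq\emptyset$ is automatic or handle $\mu=\emptyset$ separately, in which case $Y^{p\mu}=k$ and both sides are the cohomology of a point in degree $i$, needing the $i=0$ case and vanishing for $i>0$, which is immediate as $d-a$ may still be positive — wait, I should just note that $S^0$-summands, i.e.\ purely twisted monomials, are the only ones contributing, so tensoring such a monomial back with $S^a(V)$ lands us at weight $d$ and degree $i$). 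Under this degree-preserving bijection between (weight-$d$ monomials contributing to $L(\lambda)$ in degree $i$) and (weight-$(d-a)$ monomials contributing to $L(p\mu)$ in degree $i$), the multiplicity each contributes matches: on the left it is $[S^a(V):L(\lambda_{(0)})]\cdot[M:L(\mu)] = 1\cdot[M:L(\mu)]$, on the right it is $[M:L(\mu)]$. Summing over all such monomials in degree $i$ gives $[\HH_i(\sd,\vt):L(\lambda)] = [\HH_i(\Sigma_{d-a},V^{\otimes(d-a)}):L(p\mu)]$, and Theorem \ref{thm:multl of simples in Ljgivescohomology}(b) finishes it.

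The main obstacle I anticipate is the bookkeeping in the STPT multiplicity argument — specifically, making airtight the claim that the only way $L(\lambda_{(0)})$ (with $\lambda_{(0)}$ $p$-restricted, $[S^a(V):L(\lambda_{(0)})]\neq 0$) arises as the $p$-restricted part of a composition factor of some $S^{a_1}(V)\otimes M^{(1)}$ is with $a_1 = a$ and multiplicity $1$. This rests on the multiplicity-freeness of symmetric powers (Sullivan/Doty) and on the fact that the twisted factor $M^{(1)}$ contributes nothing $p$-restricted; the hypothesis $[S^a(V):L(\lambda_{(0)})]\neq 0$ is exactly what is needed to know such a summand exists at all. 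The rest — the degree-preservation under deletion of untwisted $v$'s and the weight arithmetic $d = a + p(\cdots)$ — is routine from the explicit monomial description in Theorem \ref{thm:shiftedfinalanswer for p=2 and p oddL}.
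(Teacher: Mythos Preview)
Your overall strategy matches the paper's (which is extremely terse: it just cites Corollary~\ref{corgivingstructure} and says ``the result follows from the STPT''). However, your multiplicity formula
\[
[S^{a_1}(V)\otimes M^{(1)}:L(\nu)] \;=\; [S^{a_1}(V):L(\nu_{(0)})]\cdot[M:L(\nu')]
\]
is not correct, and this breaks your conclusion that only summands with $a_1=a$ contribute. The problem is that composition factors $L(\tau)$ of $S^{a_1}(V)$ need not have $\tau$ itself $p$-restricted: e.g.\ for $p=2$, $L(3)=L(1)\otimes L(1)^{(1)}$ sits inside $S^3(V)$, and its $p$-restricted part is $(1)$. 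So when $a_1>a$ there can still be factors $L(\tau)$ of $S^{a_1}(V)$ with $\tau_{(0)}=\lambda_{(0)}$; each such $\tau=\lambda_{(0)}+p\tau'$ contributes $[L(\tau')\otimes M:L(\mu)]$ to the left-hand side, not just the term with $\tau'=\emptyset$. Thus your claim ``$a_1=a$ is forced'' is false, and the bijection you set up (pairing only the $a_1=a$ summand with the purely-twisted summands on the other side) misses contributions.

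The repair keeps your bijection idea but at a different level. Map the weight-$d$ summand $S^{a_1}(V)\otimes M^{(1)}$ to the weight-$(d-a)$ summand $S^{a_1-a}(V)\otimes M^{(1)}$ (same $M$, same homological degree; note $a_1\equiv a\pmod p$ since both are $\equiv d$). Using Doty's description (Proposition~\ref{prop: Dotytheoremanychar}) one checks that
\[
\{\,L(\tau)\ \text{in}\ S^{a_1}(V):\tau_{(0)}=\lambda_{(0)}\,\}\ \longleftrightarrow\ \{\,L(\tau')\ \text{in}\ S^{(a_1-a)/p}(V)\,\}\ \longleftrightarrow\ \{\,L(\sigma)\ \text{in}\ S^{a_1-a}(V):\sigma_{(0)}=\emptyset\,\},
\]
whence $[S^{a_1}(V)\otimes M^{(1)}:L(\lambda)]=[S^{(a_1-a)/p}(V)\otimes M:L(\mu)]=[S^{a_1-a}(V)\otimes M^{(1)}:L(p\mu)]$. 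Summands with $a_1<a$ contribute zero on the left for degree reasons. Summing over all summands and applying Theorem~\ref{thm:multl of simples in Ljgivescohomology}(b) then gives the result. You correctly flagged this STPT bookkeeping as the main obstacle; the point is that it genuinely requires Doty's structure result, not just multiplicity-freeness.
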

\begin{proof} From Corollary \ref{corgivingstructure} we see that each $GL_d(k)$-module direct summand of $\HH_\bu(\Sigma_d, \vt)$ can be written in the form $S^a(V) \otimes M^{(1)}$ for some $M$. The result follows from the  STPT.
\end{proof}

\subsection{}  Recall that if $[S^a(V) : L(\lambda_{(0)})] = 0$ then $\HH^\bu(\Sigma_d, Y^\lambda)$ is identically 0 by Theorem \ref{thm: Young modules with no cohomology}. Thus Theorem \ref{thm: oktostrip1^aoff} reduces the problem of computing $\HH^1(\sd, Y^\lambda)$ in characteristic two to the case where $\lambda=2\mu$. Next we reduce to the case where $\lambda$ is not of the form $4\mu$. From Lemma \ref{lem: omnibus1a1btensorlemma} we have:
\begin{equation}
\label{eq: tensorproduct 1x1111}
    \begin{array}{l}
[L(2) \otimes L(2^{2a})] = [L(4,2^{2a-1})] +[L(2^{2a+1})]\\

[L(2) \otimes L (2^{2a+1})] = [L(4,2^{2a})] +2[L(2^{2a+2})].\\
        \end{array}
\end{equation}
This allows us to obtain the following stability result.
\begin{thm}
\label{thm: stability result} Suppose $\lambda \vdash d$ and $p=2$.
Then:
$$\HH^1(\Sigma_{2d}, Y^{2\lambda}) \cong \HH^1(\Sigma_{4d}, Y^{4\lambda})$$ as $k$-vector spaces.
\end{thm}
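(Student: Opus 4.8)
The plan is to leverage Proposition~\ref{prop: Ext1ismultinVtimesSd-2(V)}, which identifies $\dim \HH^1(\sd,Y^\lambda)$ with the multiplicity $[S^{d-2}(V)\otimes V^{(1)} : L(\lambda)]$, and then compare the two sides by an explicit bookkeeping argument on composition factors. Applied to $\HH^1(\Sigma_{2d},Y^{2\lambda})$ we must compute $[S^{2d-2}(V)\otimes V^{(1)} : L(2\lambda)]$, and for $\HH^1(\Sigma_{4d},Y^{4\lambda})$ we must compute $[S^{4d-2}(V)\otimes V^{(1)} : L(4\lambda)]$. By Theorem~\ref{thm: oktostrip1^aoff} (taking $a=0$ there, or using Theorem~\ref{thm: Young modules with no cohomology} to discard the zero cases), I may assume $\lambda$ itself is such that $L(\lambda)$ is genuinely a constituent of the relevant symmetric power; otherwise both sides are zero and there is nothing to prove. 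So the content is: compare the $L(2\lambda)$-multiplicity in $S^{2d-2}(V)\otimes L(2)$ with the $L(4\lambda)$-multiplicity in $S^{4d-2}(V)\otimes L(2)$.

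The key steps, in order. First I would use Proposition~\ref{prop: Dotytheoreminchar2} to pin down the composition factors of $S^{2d-2}(V)$ and $S^{4d-2}(V)$: each is multiplicity-free, with constituents indexed by partitions whose $2$-adic layers are each of the form $(1^{a_i})$. Second, since $V^{(1)}\cong L(2)$ and $2$ is the prime, I would apply the Steinberg Tensor Product Theorem: a constituent $L(\nu)$ of $S^{2d-2}(V)$, written $\nu = \nu_{(0)} + 2\nu'$ with $\nu_{(0)}$ $2$-restricted, tensored with $L(2)=L((0)_{(0)} + 2(1))$ contributes $L(\nu_{(0)})\otimes \bigl(L(\nu')\otimes L(1)\bigr)^{(1)}$, so $L(2\lambda)$ (which has trivial $2$-restricted part, being $2\lambda$) can only arise from constituents $L(\nu)$ of $S^{2d-2}(V)$ with $\nu_{(0)}=\emptyset$, i.e.\ $\nu = 2\nu'$ with $L(\lambda)$ a factor of $L(\nu')\otimes L(1)$. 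Third, I would invoke the explicit tensor formulas already recorded in \eqref{eq: tensorproduct 1x1111} (equivalently Lemma~\ref{lem: omnibus1a1btensorlemma}), together with the combinatorial description from Proposition~\ref{prop: Dotytheoreminchar2}, to see which $\nu'$ occur: these are exactly the partitions $\nu'\vdash d-1$ whose $2$-adic layers are columns $(1^{a_i})$ and for which $\lambda$ appears in $L(\nu')\otimes L(1)$. The crucial observation is that multiplying $\lambda$ by $2$ (hence $\nu$ by $2$, which corresponds to shifting all $2$-adic layers up by one and inserting an empty bottom layer) sets up a weight-preserving bijection between the $\nu'$ contributing at level $2d$ and those contributing at level $4d$, and that this bijection preserves the multiplicity with which $L(\lambda)$, resp.\ $L(2\lambda)$, appears in the corresponding rank-one-up tensor product. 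Concretely, I would show $[L(\nu')\otimes L(1):L(\lambda)] = [L(2\nu')\otimes L(1):L(2\lambda)]$ using STPT on the left factor once more.

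I expect the main obstacle to be the last point: verifying that the passage $\lambda\mapsto 2\lambda$, $\nu'\mapsto 2\nu'$ really is a bijection on the indexing sets and really preserves the tensor-product multiplicities, because doubling a partition interacts subtly with $2$-restrictedness and with the Doty carry-pattern condition. The safest route is probably to reduce both computations to the single "atomic" identity in \eqref{eq: tensorproduct 1x1111}, namely $[L(2)\otimes L(2^{2a}):L(2^{2a+1})]=1$ and $[L(2)\otimes L(2^{2a+1}):L(2^{2a+2})]=2$, and then argue that both $\HH^1(\Sigma_{2d},Y^{2\lambda})$ and $\HH^1(\Sigma_{4d},Y^{4\lambda})$ are expressed by the \emph{same} count of pairs (a layer decomposition of the symmetric-power constituent, a choice of how $L(1)$ attaches), once one strips off the Frobenius twist via STPT. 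If this uniform reformulation works cleanly it will also make transparent why the stability is between $2\lambda$ and $4\lambda$ rather than between $\lambda$ and $2\lambda$: the discrepancy in \eqref{eq: tensorproduct 1x1111} between the even and odd cases of the exponent is exactly what fails to stabilize at the first doubling but becomes stationary thereafter.
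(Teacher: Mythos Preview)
Your overall strategy is exactly the paper's: reduce via Proposition~\ref{prop: Ext1ismultinVtimesSd-2(V)} to comparing $[L(2)\otimes S^{2d-2}(V):L(2\lambda)]$ with $[L(2)\otimes S^{4d-2}(V):L(4\lambda)]$, enumerate the constituents of the symmetric powers via Proposition~\ref{prop: Dotytheoreminchar2}, and use the STPT together with \eqref{eq: tensorproduct 1x1111} to match terms. The reduction you give to $\nu_{(0)}=\emptyset$ is correct and matches the paper.

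The gap is the bijection itself. Your proposal $\nu'\mapsto 2\nu'$ cannot work: if $\nu'\vdash d-1$ indexes contributions at the $2d$ level, then at the $4d$ level you need $\nu''\vdash 2d-1$, whereas $2\nu'\vdash 2d-2$. Moreover the termwise identity you would need, $[L(\nu')\otimes L(1):L(\lambda)]=[L(2\nu')\otimes L(1):L(2\lambda)]$, is already false for $\nu'=(1)$, $\lambda=(1^2)$: the left side is $2$ by \eqref{eq: easy tensor products}, while $L(2)\otimes L(1)\cong L(3)$ is irreducible so the right side is $0$. The paper's fix is precisely the ``safest route'' you gesture at: one shows that a constituent $L(\mu)$ of $S^{4d-2}(V)$ with $\mu=(2^{a_1})+(4^{a_2})+\cdots$ can contribute an $L(4\lambda)$ to $L(2)\otimes L(\mu)$ only when $a_1=1$, because in $L(2)\otimes L(2^{a_1})$ the only composition factor divisible by $4$ is the single $L(4)$ occurring when $a_1=1$. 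Peeling off that forced bottom layer and untwisting once gives
\[
[L(2)\otimes L(\mu):L(4\lambda)]=[L(4)\otimes L((4^{a_2})+(8^{a_3})+\cdots):L(4\lambda)]=[L(2)\otimes L((2^{a_2})+(4^{a_3})+\cdots):L(2\lambda)],
\]
and the right-hand side is exactly a term at the $2d$ level. In your $\nu'$ language the correct bijection is $\nu'\mapsto \nu''=(1)+2\nu'$, not $\nu'\mapsto 2\nu'$.
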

\begin{proof}

We will give a bijection between appearances of $L(4\lambda)$ in
$L(2) \otimes S^{4d-2}(V)$ and appearances of $L(2\lambda)$ in $L(2)
\otimes S^{2d-2}(V).$ Since $V^{(1)} \cong L(2)$, this together with
Proposition \ref{prop: Ext1ismultinVtimesSd-2(V)} will establish the
result.

Suppose $L(4\lambda)$ is a constituent of $L(2) \otimes L(\mu)$ for
$L(\mu)$ a constituent of $S^{4d-2}(V)$. By Proposition \ref{prop:
Dotytheoreminchar2}, $\mu=(2^{a_1}) + (4^{a_2}) + (8^{a_3}) +
\cdots.$ So by the STPT and \eqref{eq: tensorproduct 1x1111}, $L(2)
\otimes L(\mu)$ does  not have any constituents of the form $L(4
\lambda)$ unless $a_1=1$. In this case

\begin{eqnarray}
\label{eq: stabilityequation}
L(2) \otimes L(\mu) & \cong & L(2) \otimes L(2) \otimes L(4^{a_2}) \otimes \cdots\\
&=&(L(4) +2 L(2,2)) \otimes  L(4^{a_2}) \otimes \cdots \nonumber
\end{eqnarray}
 Now any $L(4\lambda)$ composition factors must come from the $L(4) \otimes  L(4^{a_2}) \otimes \cdots$ in \eqref{eq: stabilityequation}. Thus we have:
 \begin{eqnarray}\nonumber
\label{eq: first stability equationcomputation} [L(2) \otimes
L(\mu): L(4\lambda)] &=&[L(4) \otimes  L((4^{a_2}+ 8^{a_3} +
\cdots)) : L(4 \lambda)]\\ &=& [(L(2) \otimes  L((2^{a_2}+
4^{a_3} + \cdots)))^{(1)} : L(2 \lambda)^{(1)}]\\\nonumber &=&[L(2)
\otimes  L((2^{a_2}+ 4^{a_3} + \cdots)) : L(2 \lambda))].
\end{eqnarray}
Thus each $L(4\lambda)$ in $L(2) \otimes L(\mu)$ corresponds to an
$L(2\lambda)$ in $L(2) \otimes L((2^{a_2}+ 4^{a_3} + \cdots)) $
inside $L(2) \otimes S^{2d-2}(V)$ and similarly in reverse, so the
multiplicities are the same and the result follows.
\end{proof}

The previous two results are now enough to completely determine
$\HH^1(\Sigma_d, Y^\lambda)$:

\begin{thm}
\label{thm: determineext1(k,Ylambda)}

Let $p=2$ and let $\lambda \vdash d$ be in the principal block with
$\lambda$ not 2-restricted (otherwise $Y^\lambda$ is projective).
Write the 2-adic expansion of $\lambda$ as

  $$\lambda =\lambda_{(0)} + 2^s\lambda_{(s)} + 2^{s+1}\lambda_{(s+1)} + \cdots +2^r \lambda_{(r)}$$
 where $ s \geq 1$, $\lambda_{(s)} \neq \emptyset$ and each $\lambda_{(i)}$ is 2-restricted. Then:

\begin{enumerate}

\item[(a)]  If $\{\lambda_{(i)} : i \neq s\}$ are all of the form $(1^{a_i})$ and $\lambda_{(s)}=(2,1^a)$ or $(1^b)$ with $b$ odd then
  $${\rm dim}\HH^1(\Sigma_d, Y^\lambda)=1.$$
  \item[(b)]  If $\{\lambda_{(i)} : i \neq s\}$ are all of the form $(1^{a_i})$ and if $\lambda_{(s)}= (1^b)$ with $b$ even then  $${\rm dim}\HH^1(\Sigma_d, Y^\lambda)=2.$$
  \item[(c)] Otherwise $\HH^1(\Sigma_d, Y^\lambda)=0.$
\end{enumerate}
\end{thm}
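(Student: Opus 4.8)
By Proposition~\ref{prop: Ext1ismultinVtimesSd-2(V)}, the dimension of $\HH^1(\Sigma_d, Y^\lambda)$ equals $[S^{d-2}(V) \otimes V^{(1)} : L(\lambda)]$, and by Theorem~\ref{thm: Young modules with no cohomology} this vanishes automatically unless $\lambda = \lambda_{(0)} + 2\mu$ with $[S^a(V):L(\lambda_{(0)})]\neq 0$, i.e., by Proposition~\ref{prop: Dotytheoreminchar2}, unless $\lambda_{(0)}$ (if nonzero) is of the form $(1^{a_0})$. Applying Theorem~\ref{thm: oktostrip1^aoff} we may strip off $\lambda_{(0)}$ and reduce to the case $\lambda = 2\nu$ where $\nu \vdash d'$ and $2d' = d$. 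So the first step is this reduction: the ``otherwise'' clause (c) absorbs the case where some $\lambda_{(i)}$ with $i \neq s$ (or $\lambda_{(0)}$) fails to be a single column, via Theorem~\ref{thm: Young modules with no cohomology} combined with the STPT as in the proof of that theorem.

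**The core computation.** After the reduction we must compute $[L(2) \otimes S^{d-2}(V) : L(2\nu)]$ where $2\nu = 2^{s}\lambda_{(s)} + 2^{s+1}\lambda_{(s+1)} + \cdots$ after dividing out the stripped part, i.e. $\nu = 2^{s-1}\lambda_{(s)} + \cdots$. By Theorem~\ref{thm: stability result}, multiplication of the partition by $2$ (equivalently, raising $s$ by one) does not change $\HH^1$, so we may assume $s = 1$, i.e. $\lambda = 2\nu$ with $\nu$ \emph{not} of the form $2\mu'$ — equivalently $\lambda_{(1)} = \nu_{(0)} \neq \emptyset$. Now the constituents of $S^{d-2}(V)$ are the $L(\mu)$ with $\mu = (2^{a_1}) + (4^{a_2}) + (8^{a_3}) + \cdots$ by Proposition~\ref{prop: Dotytheoreminchar2}. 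As in the proof of Theorem~\ref{thm: stability result}, using \eqref{eq: tensorproduct 1x1111} and the STPT, $L(2)\otimes L(\mu)$ can contribute a constituent $L(2\nu)$ with $2\nu$ having first $2$-adic digit equal to $\lambda_{(1)} = (2,1^a)$ or $(1^b)$ only through the $L(2)\otimes L(2^{a_1})$ factor. Examining \eqref{eq: tensorproduct 1x1111} case by case: the constituent $L(4,2^{2k-1})$ corresponds (after dividing by $2$) to $L(2,1^{2k-1})$, i.e. $\lambda_{(1)}=(2,1^{a})$ with $a$ odd; the constituent $L(4,2^{2k})$ to $\lambda_{(1)}=(2,1^{a})$ with $a$ even; the constituent $L(2^{2k+1})$ to $\lambda_{(1)}=(1^{b})$ with $b$ odd (with multiplicity $1$); and the constituent $L(2^{2k+2})$ to $\lambda_{(1)}=(1^{b})$ with $b$ even (with multiplicity $2$). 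In every successful case the remaining factors $L(4^{a_2})\otimes L(8^{a_3})\otimes\cdots$ must, by the STPT, contribute exactly $L(2^{s+1}\lambda_{(s+1)} + \cdots)^{(1)}$, and since $S^{d'}(V)$ is multiplicity-free this happens with multiplicity exactly one; hence the total multiplicity is governed entirely by the $L(2)\otimes L(2^{a_1})$ factor. This yields dimension $1$ in case (a), dimension $2$ in case (b), and $0$ otherwise.

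**Assembling.** So the proof is: (i) reduce via Theorem~\ref{thm: oktostrip1^aoff} and Theorem~\ref{thm: Young modules with no cohomology} to $\lambda = 2\nu$, obtaining case (c) for partitions with a ``bad'' digit; (ii) reduce via Theorem~\ref{thm: stability result} to $s = 1$; (iii) run the explicit STPT-plus-\eqref{eq: tensorproduct 1x1111} bookkeeping to pin down which shape of $\lambda_{(1)}$ gives multiplicity $1$, which gives $2$, and confirm the higher digits contribute a unique factor by multiplicity-freeness of symmetric powers (Proposition~\ref{prop: Dotytheoreminchar2}).

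**The main obstacle.** The delicate point is the bookkeeping in step (iii): one must be careful that no \emph{additional} copies of $L(2\nu)$ sneak in from the interaction between the $L(2,2)$ summand of $L(2)\otimes L(2)$ and the higher-twist factors — i.e., one needs that $L(2,2)\otimes (\text{twisted stuff})$ cannot produce $L(2\lambda)$ because $L(2,2)$ is not a twist, so by the STPT it would force the first $2$-adic digit of the output to contain $(2,2)$ or $(1^2)$, which must be matched against $\lambda_{(1)}$; this is exactly why the factor of $2$ appears precisely for $\lambda_{(1)} = (1^b)$ with $b$ even, and why $(2,1^a)$ with $a$ even still only gives $1$. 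Getting this digit-matching argument airtight — essentially tracking carries in the binary addition underlying Proposition~\ref{prop: Dotytheoreminchar2} — is the crux; everything else is a direct appeal to results already established.
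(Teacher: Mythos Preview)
Your proposal is correct and follows essentially the same route as the paper: strip $\lambda_{(0)}$ via Theorem~\ref{thm: oktostrip1^aoff}, reduce to $s=1$ via the stability Theorem~\ref{thm: stability result}, and then read off the multiplicity from \eqref{eq: tensorproduct 1x1111} together with the STPT and the multiplicity-freeness of $S^{d-2}(V)$. Your ``main obstacle'' paragraph is slightly over-cautious---once $s=1$, the formula \eqref{eq: tensorproduct 1x1111} already lists \emph{all} constituents of $L(2)\otimes L(2^{a_1})$, and each is a simple module whose tensor product with $L(4^{a_2})\otimes L(8^{a_3})\otimes\cdots$ is again simple by the STPT, so there is no hidden interaction to rule out---but this does not affect the validity of the argument.
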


\begin{proof}Since any constituent of $S^{d-2}(V)$ is of the form $L(\tau)$ for $\tau=(1^a) + 2\mu$, it is immediate that $\lambda_{(0)}$ must be of the form $(1^a)$ for the cohomology to be nonzero.
 Now Theorem \ref{thm: oktostrip1^aoff} says we can assume without loss that $\lambda_{(0)}= \emptyset.$
With this assumption we can apply Theorem \ref{thm: stability
result} to assume, again without loss of generality, that $s=1$,
i.e. $\lambda$ is of the form $2\mu$ but not of the form $4\mu$.

We know $\HH^1(\sd, Y^\mu)$ is nonzero if and only if $L(\mu)$
occurs in $L(2) \otimes S^{d-2}(V)$. Referring to \eqref{eq:
stabilityequation}, and our assumption that $\lambda$ is not of the
form $4\mu$, we need to know when $L(\lambda)$ occurs inside
$$L(2) \otimes L(2^a) \otimes L(4^{a_2}) \otimes \cdots.$$ Now the STPT and \eqref{eq: tensorproduct 1x1111} gives the result.

\end{proof}

\begin{exm}In characteristic two, $\HH^1(\Sigma_{47}, Y^{(17,13,13,4)}) \cong k$.
\end{exm}
This follows since $(17,13,13,4)=(1^3)+ 2^2(2,1,1)+ 2^3(1^3)$ so $s=2$
and we are in case (a) of the theorem. Similarly since
$(57,41,9,8)=(1^3)+2^3(1^4)+2^4(1)+2^5(1^2)$ we conclude that
\begin{exm}In characteristic two, $\dim \HH^1(\Sigma_{115}, Y^{(57,41,9,8)})=2.$
\end{exm}

\subsection{}
Next we compute $\HH^2(\Sigma_d, Y^\lambda)$ in characteristic two.

\begin{prop}
\label{prop: dimext2ismultinsummands} Let $p=2$. The dimension of
$\HH^2(\Sigma_d, Y^\lambda)$ is the multiplicity of $L(\lambda)$ in
$$L(2) \otimes S^{d-2}(V) \oplus S^2(V^{(1)}) \otimes S^{d-4}(V).$$
\end{prop}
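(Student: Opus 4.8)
The plan is to apply Theorem \ref{thm:multl of simples in Ljgivescohomology}(b), which identifies $\dim \HH^2(\Sigma_d, Y^\lambda)$ with the multiplicity $[\HH_2(\Sigma_d,\vt):L(\lambda)]$, and then simply to read off the internal-degree-$2$ part of $\HH_\bu(\Sigma_d,\vt)$ as a $GL_d(k)$-module from Theorem \ref{thm:shiftedfinalanswer for p=2 and p oddL}(a). Since $\oplus_{d\ge 0}\HH_\bu(\Sigma_d,\vt)$ is the polynomial algebra $S[\overline{Q}_I(v_\gamma)]$, everything in degree $2$ is a product of generators whose degrees sum to $2$, subject to the constraint that the weight of the product equals $d$; so the whole argument reduces to enumerating such products and identifying the corresponding $GL_d(k)$-summand.

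First I would list the generators of degree at most $2$. A generator $\overline{Q}_I(v_\gamma)$ with $I=(i_1,\dots,i_t)$, $0<i_1\le\cdots\le i_t$, has degree $i_1+2i_2+\cdots+2^{t-1}i_t\ge 1+2+\cdots+2^{t-1}=2^t-1$, so any generator with $t\ge 2$ has degree $\ge 3$. Hence the only available generators are $v_\gamma$ (degree $0$, weight $1$), $\overline{Q}_1(v_\gamma)$ (degree $1$, weight $2$) and $\overline{Q}_2(v_\gamma)$ (degree $2$, weight $2$), and a monomial of total degree $2$ and weight $d$ is therefore of exactly one of two shapes: $\overline{Q}_2(v)\cdot v^{\,d-2}$, or $\overline{Q}_1(v)\cdot\overline{Q}_1(v')\cdot v^{\,d-4}$ (the two $\overline{Q}_1$-factors possibly involving the same basis vector). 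Using the $GL_d(k)$-action described in Section \ref{sec:The.prime.two} — $\overline{Q}_i$ is additive and $\overline{Q}_i(\alpha x)=\alpha^2\overline{Q}_i(x)$, so the span of the $\overline{Q}_i(v_\gamma)$ is a copy of $V^{(1)}$, and the ambient algebra is symmetric — the span of the monomials of the first shape is $V^{(1)}\otimes S^{d-2}(V)$, and the span of those of the second shape is $S^2(V^{(1)})\otimes S^{d-4}(V)$ (these are precisely the analogues of the relevant rows of Table \ref{table:summandsind=6}).

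Combining, $\HH_2(\Sigma_d,\vt)\cong\bigl(V^{(1)}\otimes S^{d-2}(V)\bigr)\oplus\bigl(S^2(V^{(1)})\otimes S^{d-4}(V)\bigr)$ as $GL_d(k)$-modules, with the usual convention $S^m(V)=0$ for $m<0$ so that the small cases $d\le 3$ require no separate treatment. Since $V^{(1)}\cong L(2)$, taking the multiplicity of $L(\lambda)$ on both sides and invoking Theorem \ref{thm:multl of simples in Ljgivescohomology}(b) yields the stated formula. There is no real obstacle here: the one point needing care is the bookkeeping just carried out, namely verifying that no longer composite $\overline{Q}_I$ and no product mixing $\overline{Q}_2$ with a $\overline{Q}_1$ can land in internal degree $2$, and this is immediate from the degree formula in Theorem \ref{thm:shiftedfinalanswer for p=2 and p oddL}(a).
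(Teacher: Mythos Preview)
Your proposal is correct and follows exactly the paper's approach: the paper's proof is the single sentence ``The only monomials which contribute to degree two are those of the form $Q_2(v)\cdot v^{d-2}$ and $Q_1(v)\cdot Q_1(v)\cdot v^{d-4}$,'' and you have simply spelled out the bookkeeping behind that sentence (including the observation that $t\ge 2$ forces degree $\ge 3$, and the identification $V^{(1)}\cong L(2)$).
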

\begin{proof}The only monomials which contribute to degree two are those of the form $Q_2(v) \cdot v^{d-2}$ and $Q_1(v)\cdot Q_1(v) \cdot v^{d-4}$.
\end{proof} The multiplicity of $L(\lambda)$ in $L(2) \otimes S^{d-2}(V)$ is $\dim \HH^1(\Sigma_d,Y^\lambda)$, which we have already determined.
Theorem \ref{thm: oktostrip1^aoff} makes it sufficient to determine
$\HH^2(\sd, Y^{2\lambda})$.

Next we prove another stability result:
\begin{thm}
\label{thm: stability resultforext2} Let $p=2$ and suppose $\lambda \vdash d$.
Then
$$\HH^2(\Sigma_{4d}, Y^{4\lambda}) \cong \HH^2(\Sigma_{8d}, Y^{8\lambda}).$$
\end{thm}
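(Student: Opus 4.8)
The plan is to reduce the statement to composition-factor multiplicities in symmetric powers via Proposition~\ref{prop: dimext2ismultinsummands}, and then to match the two sides using the already-established stability of $\HH^1$ in Theorem~\ref{thm: stability result}, Doty's description of the constituents of $S^m(V)$ in characteristic two (Proposition~\ref{prop: Dotytheoreminchar2}), and the STPT.

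First, by Proposition~\ref{prop: dimext2ismultinsummands},
$$\di\HH^2(\Sigma_{4d},Y^{4\lambda}) = \bigl[\,L(2)\otimes S^{4d-2}(V)\ \oplus\ S^2(V^{(1)})\otimes S^{4d-4}(V)\ :\ L(4\lambda)\,\bigr],$$
and similarly with $4$ replaced by $8$ throughout. The first summand contributes $[\,V^{(1)}\otimes S^{4d-2}(V):L(4\lambda)\,]=\di\HH^1(\Sigma_{4d},Y^{4\lambda})$ by Proposition~\ref{prop: Ext1ismultinVtimesSd-2(V)}, and the corresponding term at the $8d$-level contributes $\di\HH^1(\Sigma_{8d},Y^{8\lambda})$; these agree by Theorem~\ref{thm: stability result} applied to the partition $2\lambda\vdash 2d$. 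So it remains to compare $[\,S^2(V^{(1)})\otimes S^{4d-4}(V):L(4\lambda)\,]$ with $[\,S^2(V^{(1)})\otimes S^{8d-4}(V):L(8\lambda)\,]$.

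For this I would use $S^2(V^{(1)})\cong S^2(V)^{(1)}$ (Remark~\ref{rem:tensortwists}) and $[S^2(V)]=[L(2)]+[L(1^2)]$, so that in the Grothendieck group $S^2(V^{(1)})$ has constituents $L(4)=L(2)^{(1)}$ and $L(2^2)=L(1^2)^{(1)}$, which I would handle separately. By Proposition~\ref{prop: Dotytheoreminchar2} each constituent $L(\rho)$ of $S^m(V)$ has $2$-adic expansion $\rho=(1^{r_0})+2\rho'$ with $\rho'$ again of this ``Doty'' shape, so $L(\rho)\cong L((1^{r_0}))\otimes L(\rho')^{(1)}$ by the STPT. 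For the $L(2^2)$-part one gets $L(2^2)\otimes L(\rho)\cong\bigl(L(1^2)\otimes L(1^{r_0})\bigr)\otimes L(\rho')^{(1)}$; by Lemma~\ref{lem: omnibus1a1btensorlemma} every constituent of $L(1^2)\otimes L(1^{r_0})$ is $2$-restricted of size $r_0+2>0$, whereas $8\lambda$ and $4\lambda$ have empty $2$-restricted part, so the STPT shows $L(8\lambda)$ and $L(4\lambda)$ never occur in $L(2^2)\otimes S^m(V)$ — this part contributes $0$ at both levels. For the $L(4)$-part, $L(4)\otimes L(\rho)\cong L((1^{r_0}))\otimes\bigl(L(2)\otimes L(\rho')\bigr)^{(1)}$; matching a constituent against $L(8\lambda)$ (again with empty $2$-restricted part) forces $r_0=0$ and then $[\,L(4)\otimes L(\rho):L(8\lambda)\,]=[\,L(2)\otimes L(\rho'):L(4\lambda)\,]$. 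As $\rho$ ranges over the constituents of $S^{8d-4}(V)$ with $r_0=0$, the partition $\rho'=\tfrac12\rho$ ranges bijectively over all constituents of $S^{4d-2}(V)$, so
$$[\,L(4)\otimes S^{8d-4}(V):L(8\lambda)\,]=[\,L(2)\otimes S^{4d-2}(V):L(4\lambda)\,]=\di\HH^1(\Sigma_{4d},Y^{4\lambda}),$$
and the identical argument one level down gives $[\,L(4)\otimes S^{4d-4}(V):L(4\lambda)\,]=\di\HH^1(\Sigma_{2d},Y^{2\lambda})$.

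Putting the pieces together, $\di\HH^2(\Sigma_{4d},Y^{4\lambda})=\di\HH^1(\Sigma_{4d},Y^{4\lambda})+\di\HH^1(\Sigma_{2d},Y^{2\lambda})$ and $\di\HH^2(\Sigma_{8d},Y^{8\lambda})=\di\HH^1(\Sigma_{8d},Y^{8\lambda})+\di\HH^1(\Sigma_{4d},Y^{4\lambda})$; since Theorem~\ref{thm: stability result} (used twice) gives $\di\HH^1(\Sigma_{2d},Y^{2\lambda})=\di\HH^1(\Sigma_{4d},Y^{4\lambda})=\di\HH^1(\Sigma_{8d},Y^{8\lambda})$, both sides equal $2\,\di\HH^1(\Sigma_{4d},Y^{4\lambda})$ and the isomorphism follows. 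The step I expect to be the main obstacle is the STPT bookkeeping in the previous paragraph — in particular verifying that $\rho\mapsto\tfrac12\rho$ is genuinely a bijection between the relevant sets of Doty-type partitions, and confirming that the $L(1^2)$-summand contributes nothing rather than merely stabilizing; once those are pinned down the argument in fact yields the sharper statement $\di\HH^2(\Sigma_{4d},Y^{4\lambda})=2\,\di\HH^1(\Sigma_{4d},Y^{4\lambda})$.
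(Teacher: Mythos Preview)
There is a genuine error in your treatment of the $L(2^2)$ summand.  You write
\[
L(2^2)\otimes L(\rho)\ \cong\ \bigl(L(1^2)\otimes L(1^{r_0})\bigr)\otimes L(\rho')^{(1)},
\]
but $L(2^2)=L(1^2)^{(1)}$ is already a Frobenius twist, so the correct factorization (exactly parallel to your correct handling of $L(4)=L(2)^{(1)}$) is
\[
L(2^2)\otimes L(\rho)\ \cong\ L(1^{r_0})\otimes\bigl(L(1^2)\otimes L(\rho')\bigr)^{(1)}.
\]
With this in hand, constituents of the form $L(4\lambda)$ \emph{can} occur: take $r_0=0$, write $\rho'=(1^{r_1})+2\rho''$, and note that when $r_1=2$ the tensor product $L(1^2)\otimes L(1^2)$ contains a copy of $L(2^2)=L(2\cdot(1^2))$ by Lemma~\ref{lem: omnibus1a1btensorlemma}.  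So $L(2^2)\otimes S^{4d-4}(V)$ does contribute, and your ``sharper statement'' $\dim\HH^2(\Sigma_{4d},Y^{4\lambda})=2\dim\HH^1(\Sigma_{4d},Y^{4\lambda})$ is in fact false: for $\lambda=(1^2)$ one has $\dim\HH^2(\Sigma_8,Y^{(4,4)})=5$ (Table~\ref{table: ext2youngmodules}, second table, column $\lambda_{(s)}=1^2$) while $\dim\HH^1(\Sigma_8,Y^{(4,4)})=2$ (Theorem~\ref{thm: determineext1(k,Ylambda)}(b)).

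Your overall strategy---reduce via Proposition~\ref{prop: dimext2ismultinsummands}, handle the $L(2)\otimes S^{m}(V)$ piece by Theorem~\ref{thm: stability result}, and split $S^2(V^{(1)})$ into $L(4)$ and $L(2^2)$---is exactly the paper's approach, and your $L(4)$ argument is correct.  What remains is to show that the $L(2^2)$ contribution is \emph{equal} at the $4d$ and $8d$ levels, not zero.  The paper does this by observing that at the $4d$ level one is forced to $r_0=0$, $r_1=2$ and then counts constituents of $L(4^2)\otimes L(4^{a_2})\otimes L(8^{a_3})\otimes\cdots$, while at the $8d$ level one is forced to $r_0=0$, $r_1=r_2=2$ and counts constituents of $L(8^2)\otimes L(8^{a_3})\otimes\cdots$; these match under a single Frobenius twist, just as in your $L(4)$ argument.
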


\begin{proof}By Proposition \ref{prop: dimext2ismultinsummands}, it is enough to show the two equalities:

\begin{eqnarray}
\label{eq: ext2stabilityfromext1}
[L(2) \otimes S^{8d-2}(V) : L(8\lambda)] &=& [L(2) \otimes S^{4d-2}(V) : L(4 \lambda)] \\
\label{eq: stabilityequtionforext2} [S^2(V^{(1)}) \otimes
S^{8d-2}(V): L(8\lambda)]&=& [S^2(V^{(1)}) \otimes S^{4d-2}(V):
L(4\lambda)].
\end{eqnarray}
Equation \ref{eq: ext2stabilityfromext1} was obtained in the proof
of Theorem \ref{thm: stability result} so we show Equation \ref{eq:
stabilityequtionforext2} holds.

 We know $[S^2(V^{(1)})]=[L(4)] +[L(2^2)]$.  To show the multiplicity of $L(4\lambda)$ coming from the  $L(4)$  term is equal to that of $L(8 \lambda)$, the proof proceeds exactly as  the proof of Theorem \ref{thm: stability result}, everything is just twisted once.

Now suppose $L(2^2) \otimes L(\mu)$ has a constituent $L(4
\lambda)$, where $\mu=2^{a_1} + 4^{a_2}+ \cdots$. By Lemma \ref{lem:
tensorproducts11x11111},  this only happen when $a_1=2$, in which
case $[L(2^2) \otimes L(2^2)] =[L(4^2)]+2[L(4,2^2)]+2[L(2^4)]$. Thus
the $L(4\lambda)$ which occur are exactly the constituents of
$L(4^2) \otimes L(4^{a_2}) \otimes L(8^{a_3})\cdots.$

Next suppose $L(2^2) \otimes L(\mu)$ has a constituent $L(8
\lambda)$, where $\mu=(2^{a_1}) + (4^{a_2})+(8^{a_3})+ \cdots$.
Arguing as above we see that we must have $a_1=a_2=2$. By Lemma
\ref{lem: tensorproducts11x11111}, $L(2^2) \otimes L(2^2) \otimes
L(4^2)$ has $L(8^2)$ as its only term divisible by 8. Thus the
$L(8\lambda)$ which occur are the constituents of $L(8^2) \otimes
L(8^{a_3}) \otimes \cdots.$, and the result is proved as in
\eqref{eq: first stability equationcomputation}.

\end{proof}

Proposition \ref{prop: dimext2ismultinsummands} gives three modules
to analyze, namely $L(2) \otimes S^{d-2}(V)$, $L(4) \otimes
S^{d-4}(V)$ and $L(2^2) \otimes S^{d-4}(V)$. For a given $2\lambda$,
we must find the number of times $L(2\lambda)$ occurs in each. For
$L(2) \otimes S^{d-2}(V)$, we know that $L(2\lambda)$ occurs the
dimension of $\HH^1(\sd, Y^{2\lambda})$ times, which was already
determined. Next we must determine the constituents of $L(4) \otimes
S^{d-4}(V)$ and $L(2^2) \otimes S^{d-4}(V).$ The former is
straightforward:

\begin{lem}
\label{lem: constituents of L(4)xSd-4(V)}
 Suppose $L(2\lambda)$ is a constituent of $L(4) \otimes S^{d-4}(V)$. Then $2\lambda$ is of the form $(2^{a_1})+ 4\mu$ for some $a_1 \geq 0$ and $\mu \neq \emptyset$. The multiplicity of $L(2\lambda)$ is the dimension of $\HH^1(\Sigma_{2c}, Y^{2\mu})$, given by Theorem \ref{thm: determineext1(k,Ylambda)}, where $c=d/2 - a_1$.
 \end{lem}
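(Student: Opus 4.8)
The plan is to use the Steinberg Tensor Product Theorem together with Proposition \ref{prop: Dotytheoreminchar2} and the known tensor products from \eqref{eq: tensorproduct 1x1111}. First I would write a typical constituent of $S^{d-4}(V)$ as $L(\mu')$ where, by Proposition \ref{prop: Dotytheoreminchar2}, $\mu' = (2^{a_1}) + 4\nu$ with each $2$-adic component of $\mu'$ a single column; here $(2^{a_1})$ records the component in $2$-adic position $1$ (it is $2$ times a column $(1^{a_1})$) and $4\nu$ absorbs all higher components. Writing $L(4) \cong L(2)^{(1)}$ and using the STPT, we get $L(4) \otimes L(\mu') \cong L(2^{a_1}) \otimes \bigl( L(2) \otimes L(\nu) \bigr)^{(1)}$. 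Since the untwisted factor $L(2^{a_1})$ is a constituent of $S^{2a_1}(V)$ and is $2$-restricted, and the twisted factor is a Frobenius twist, any constituent $L(\sigma)$ of this tensor product has $\sigma_{(0)} = (2^{a_1})$ in its $2$-adic expansion (this needs $a_1 \le 1$ for $(2^{a_1})$ to be $2$-restricted — wait, $(2^{a_1}) = (2,2,\dots,2)$ is always $2$-restricted since consecutive differences are $0$, so this is fine), and $\sigma = (2^{a_1}) + 2\bigl(\text{something}\bigr)$. For $\sigma = 2\lambda$ to have the stated form $(2^{a_1}) + 4\mu$, we need the "something" from $L(2)\otimes L(\nu)$ to itself be of the form $2\mu$ with $\mu \neq \emptyset$; I would verify $\mu \neq \emptyset$ by noting that $\nu \neq \emptyset$ is forced (otherwise $L(4)\otimes S^{d-4}(V)$-type contributions degenerate and $\lambda$ would be projective/not in the principal block), and then appeal to \eqref{eq: tensorproduct 1x1111} to see the constituents of $L(2) \otimes L(\nu)$.

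The multiplicity computation is then the key step. By the displayed isomorphism and the STPT, and because the untwisted part $L(2^{a_1})$ contributes with multiplicity one in the relevant slot,
\[
[L(4) \otimes L(\mu') : L(2\lambda)] = [\,(L(2) \otimes L(\nu))^{(1)} : L(2\mu)^{(1)}\,] = [L(2) \otimes L(\nu) : L(2\mu)].
\]
Summing over all constituents $L(\mu')$ of $S^{d-4}(V)$ with the fixed value $a_1$, the $\nu$'s that occur are exactly the constituents of $S^{c-?}$... more precisely, if $2\lambda = (2^{a_1}) + 4\mu$ and $c = d/2 - a_1$, then the relevant $\nu$'s range over constituents of $S^{2(c-1)}(V)$ (since $\deg \mu' - 2a_1 = d - 4 - 2a_1 = 2(c-1)$), i.e. exactly those appearing in $S^{2c-2}(V)$. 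Hence
\[
[L(4) \otimes S^{d-4}(V) : L(2\lambda)] = [L(2) \otimes S^{2c-2}(V) : L(2\mu)] = \dim \HH^1(\Sigma_{2c}, Y^{2\mu})
\]
by Proposition \ref{prop: Ext1ismultinVtimesSd-2(V)}, and this last quantity is given explicitly by Theorem \ref{thm: determineext1(k,Ylambda)}.

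The main obstacle I expect is bookkeeping the $2$-adic expansions carefully: one must be sure that distinct pairs $(a_1, \nu)$ giving constituents of $S^{d-4}(V)$ do not produce overlapping contributions to $L(2\lambda)$, and that every constituent of the target is accounted for exactly once. This is where Proposition \ref{prop: Dotytheoreminchar2} (each $2$-adic component is a single column, so the decomposition $\mu' = (2^{a_1}) + 4\nu$ is unique) and the fact that $S^m(V)$ is multiplicity-free are both essential. A secondary point to check is the degree arithmetic $c = d/2 - a_1$: since $2\lambda \vdash d$ and $(2^{a_1}) \vdash 2a_1$, we have $4\mu \vdash d - 2a_1$, so $\mu \vdash (d-2a_1)/4 = (d/2 - a_1)/2 = c/2$; then $2\mu \vdash c$, consistent with $\HH^1(\Sigma_{2c}, Y^{2\mu})$ and with $2c - 2 = d - 2a_1 - 2 = \deg\mu'$ when $\deg\mu' = d-4$ — so in fact $a_1$ is determined by $\lambda$, namely $2a_1 = (d-4) - (2c-2) = d - 2c - 2$, i.e. $a_1 = d/2 - c - 1$; I would double-check this matches $c = d/2 - a_1$ up to the off-by-one coming from $S^{d-4}$ versus the $\mu'$ of degree $d-4$, and adjust the statement's constant if needed. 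Once these indices are pinned down the argument is a direct application of the STPT and the results already established.
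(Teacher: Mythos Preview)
Your approach is exactly the paper's: write a constituent of $S^{d-4}(V)$ using Proposition~\ref{prop: Dotytheoreminchar2}, pull the $L(4)$ inside a Frobenius twist via the STPT, and identify what remains with the expression for $\HH^1$ from Proposition~\ref{prop: Ext1ismultinVtimesSd-2(V)}. The idea is correct, but there is a concrete error in your twist bookkeeping, and one mis-stated fact.

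The twist error: with your notation $\mu' = (2^{a_1}) + 4\nu$, one has $L(\mu') = L(1^{a_1})^{(1)} \otimes L(\nu)^{(2)}$ and $L(4) = L(1)^{(2)}$, so
\[
L(4)\otimes L(\mu') \;=\; L(2^{a_1}) \otimes \bigl(L(1)\otimes L(\nu)\bigr)^{(2)} \;=\; L(2^{a_1}) \otimes \bigl(L(2)\otimes L(2\nu)\bigr)^{(1)},
\]
not $L(2^{a_1}) \otimes (L(2)\otimes L(\nu))^{(1)}$ as you wrote. The point is that $L(4\nu) = L(2\nu)^{(1)}$, so factoring out a \emph{single} twist leaves $L(2\nu)$, not $L(\nu)$. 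Once this is fixed, the multiplicity count becomes $[L(2)\otimes L(2\nu):L(2\mu)]$, and summing over the admissible $\nu$ gives $[L(2)\otimes S^{c-2}(V):L(2\mu)] = \dim \HH^1(\Sigma_c, Y^{2\mu})$, which is what the paper's proof obtains (the ``$\Sigma_{2c}$'' in the statement is a typo for ``$\Sigma_c$'', which is why your degree check at the end didn't close; you correctly computed $2\mu \vdash c$).

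The mis-stated fact: $(2^{a_1}) = (2,2,\ldots,2)$ is \emph{not} $2$-restricted for $a_1\geq 1$, since the last nonzero part minus the next (zero) part equals $2$. Fortunately your argument does not actually need this; what matters is that $L(2^{a_1}) = L(1^{a_1})^{(1)}$ is itself a Frobenius twist, so every constituent of $L(2^{a_1}) \otimes M^{(1)}$ has trivial $2$-adic position-$0$ part, and its position-$1$ part is forced to be $(1^{a_1})$ by the STPT. That is the correct justification for ``$a_1$ is determined by $\lambda$'' and for the multiplicity-one contribution of the untwisted factor.
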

 \begin{proof}
 By Proposition \ref{prop: Dotytheoreminchar2}, we are considering constituents of tensor products of the form $L(4) \otimes L(2^{a_1}) \otimes L(4^{a_2}) \otimes \cdots$. Rearranging, we get
 \begin{equation}
 \label{eq:case2ext2}
 L(2^{a_1}) \otimes (L(2) \otimes L(2^{a_2}) \otimes L(4^{a_2}) \otimes \cdots )^{(1)}
 \end{equation}
 The constituent $L(2\mu)$ appears in $L(2) \otimes L(2^{a_2}) \otimes L(4^{a_2}) \cdots$ exactly the dimension of $\HH^1(\Sigma_{2c},Y^{2\mu})$ times, by Proposition \ref{prop: Ext1ismultinVtimesSd-2(V)}, so the result follows.

 \end{proof}

Finally we turn to the constituents of $L(2^2) \otimes S^{d-4}(V)$
of the form $L(2\lambda)$. Thus we must consider
$$L(2^2) \otimes L(2^{a_1}) \otimes L(4^{a_2}) \otimes  \cdots.$$ Suppose $a_1 \neq 2$. We know from Lemma \ref{lem: tensorproducts11x11111} that these constituents will be all of the form $L((\mu) +(4^{a_2})+ (8^{a_3}) + \cdots)$ where $\mu$ is of the form $(4,2^a), (4^2,2^a)$ or $(2^a)$, and the multiplicities depending on the congruence class of $a$ mod 4. When $a_1=2$ we also have an $L(4,4)$ term in the $L(2,2) \otimes L(2,2)$, so we also get higher twists of the constituents above.

We  now have all the information  necessary to completely determine $\HH^2(\Sigma_d, Y^\lambda)$.

\begin{thm}
\label{thm: ext2foryoungmodule} Let $p=2$ and let
$$\lambda=\lambda_{(0)} + 2\lambda_{(1)} + 2^s\lambda_{(s)} + \cdots
+ 2^r\lambda_{(r)}$$ be the 2-adic expansion of $\lambda$, where
$\lambda_{(s)}$ is nonempty unless $\lambda=\lambda_{(0)} +
2\lambda_{(1)}$.

\begin{enumerate}
  \item[(a)] If $\HH^2(\Sigma_d, Y^\lambda) \neq 0$ then $\lambda_{(t)}$ is of the form $(1^{a_t}), a_t \geq 0$ for all $t\neq 1,s$.
  \item[(b)] Suppose the $\lambda_{(t)}$'s are as in $(a)$. Then the choices of $\lambda_{(1)}$ and $\lambda_{(s)}$ which  give  nonzero $\HH^2(\sd, Y^\lambda)$ are given in  Table \ref{table: ext2youngmodules}, together with the dimension.\\

\begin{table}[here]

\begin{tabular}{|c|ccccccccc|}

 \hline
 $\lambda_{(s)}  \setminus \lambda_{(1)}$&\rule{0cm}{0.4cm}
  $1$ & $1^2$ & $1^{4a-1}$ & $1^{4a}$ & $1^{4a+1}$ & $1^{4a+2}$ &
 $21^{2c-1}$ & $21^{2c}$ & $221^a$ \\    \hline
 $ 1^{2b}$ &\rule{0cm}{0.4cm} $3$ & $5$ & $4$ & $6$& $5$ & $7$ & $2$ & $3$ & $1$\\
  $1^{2b-1}$ &$2$& $4$ & $3$ & $5$ & $4$& $6$ & $2$ & $3$ & $1$\\
  $21^b$ & $1$ & $1$ & $1$ & $1$ & $1$ & $1$ & $0$ &$ 0$ &$0$ \\
  $\emptyset$ & $1$&$3$ & $2$ & $4$ & $3$ & $5$ & $2$ & $3$ & $1$\\
  \hline
\end{tabular}

 \vspace{0.2in}
\begin{tabular}{|c|ccccccccccc|}

 \hline
 $\lambda_{(1)}  \setminus \lambda_{(s)}$& $\emptyset$ &\rule{0cm}{0.4cm} $1$ & $1^2$ & $1^{4a-1}$ & $1^{4a}$ & $1^{4a+1}$ & $1^{4a+2}$ & $1^{4a+3}$ & $21^{2c-1}$ & $21^{2c}$ & $221^c$\\    \hline
$\emptyset$ & $0$ & $2$ & $5$ & $3$ & $6$\rule{0cm}{0.4cm} & $4$ & $7$ & $3$ & $3$&$4$ & $1$\\
  \hline
\end{tabular}
\vspace*{0.1in} \caption{Nonzero $\dim \HH^2(\Sigma_d,Y^\lambda)$ sorted by
$\lambda_{(1)}$ and $\lambda_{(s)}$ , $a, b, c \geq 1.$} \label{table: ext2youngmodules}
\end{table}
\end{enumerate}

\end{thm}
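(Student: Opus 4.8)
The plan is to assemble the ingredients already in place. By Proposition \ref{prop: dimext2ismultinsummands}, $\dim \HH^2(\Sigma_d, Y^\lambda)$ equals the multiplicity of $L(\lambda)$ in
$$L(2)\otimes S^{d-2}(V)\ \oplus\ L(4)\otimes S^{d-4}(V)\ \oplus\ L(2^2)\otimes S^{d-4}(V),$$
so I would compute these three contributions separately and add them. The first summand contributes exactly $\dim\HH^1(\Sigma_d, Y^\lambda)$ by Proposition \ref{prop: Ext1ismultinVtimesSd-2(V)}, which is already known from Theorem \ref{thm: determineext1(k,Ylambda)}. The $L(4)$-summand is handled by Lemma \ref{lem: constituents of L(4)xSd-4(V)}, which again expresses its multiplicities in terms of $\HH^1$ of a partition with one fewer Frobenius twist. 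The $L(2^2)$-summand, the most delicate one, is governed by the formula for $L(1^2)\otimes L(1^a)$ in Lemma \ref{lem: omnibus1a1btensorlemma} together with the STPT, with behaviour depending on the residue of the relevant column exponent modulo $4$, and including the exceptional $L(2^2)\otimes L(2^2)$ situation (an extra $L(4^2)$ factor), which is precisely the $\lambda_{(1)}=(2^2)$ column of Table \ref{table: ext2youngmodules}.

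Before the bookkeeping I would make two reductions. If $\lambda_{(0)}$ is not of the form $(1^{a_0})$ then $[S^a(V):L(\lambda_{(0)})]=0$, so $\HH^\bu(\Sigma_d,Y^\lambda)=0$ by Theorem \ref{thm: Young modules with no cohomology} and there is nothing to prove; otherwise Theorem \ref{thm: oktostrip1^aoff} lets me replace $\lambda$ by $2\mu$ with $\mu=\lambda_{(1)}+2\lambda_{(2)}+\cdots$, i.e.\ assume $\lambda_{(0)}=\emptyset$, which is why the answer in the statement is independent of $\lambda_{(0)}$. Second, Theorem \ref{thm: stability resultforext2} shows that, past the first Frobenius twist, the answer is unchanged by multiplying the partition by a further power of $2$; this is why only the pair $(\lambda_{(1)},\lambda_{(s)})$ — the ``unstable'' first digit together with the first higher nonzero digit — enters, rather than the particular value of $s\ge 2$. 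Part (a) then drops out: since every constituent of $S^{d-2}(V)$ or $S^{d-4}(V)$ has all $2$-adic digits equal to columns by Proposition \ref{prop: Dotytheoreminchar2}, and since (by the STPT and Lemma \ref{lem: omnibus1a1btensorlemma}) the $L(2)$- and $L(2^2)$-summands perturb only the first $2$-adic digit while the $L(4)$-summand perturbs only the first higher nonzero digit (keeping digit $1$ a column), any nonzero contribution forces $\lambda_{(t)}=(1^{a_t})$ for all $t\neq 1,s$.

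For part (b) I would fix such a $\lambda$ and run through the finitely many shapes that $\lambda_{(1)}$ and $\lambda_{(s)}$ can take as outputs of the three tensor products — these are at most two-column partitions $(1^c)$, $(2,1^c)$, $(2^2,1^c)$, intersected with the nonvanishing conditions of Theorem \ref{thm: determineext1(k,Ylambda)} — recording for each shape the contribution from each of the three summands ($L(2)$: directly from Theorem \ref{thm: determineext1(k,Ylambda)}; $L(4)$: from Lemma \ref{lem: constituents of L(4)xSd-4(V)}, read off from Theorem \ref{thm: determineext1(k,Ylambda)} after dropping a twist; $L(2^2)$: from the residue-mod-$4$ split of Lemma \ref{lem: omnibus1a1btensorlemma}), then summing. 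This yields the entries of the first table in Theorem \ref{thm: ext2foryoungmodule}; the separate table for $\lambda_{(1)}=\emptyset$ (the case $\lambda=4\mu$) is where the $L(2)$-summand contributes nothing and only the $L(4)$- and $L(2^2)$-summands are evaluated. The main obstacle is exactly this last combinatorial accounting: keeping the three sources of multiplicity, their dependence on $\lambda_{(1)}\bmod 2$ and $\lambda_{(s)}\bmod 4$ (and the $a\equiv 0,1,2,3$ case split in Lemma \ref{lem: omnibus1a1btensorlemma}), and the carrying in the $2$-adic addition all consistent, so the totals match the table. No new conceptual input beyond Propositions \ref{prop: dimext2ismultinsummands}, \ref{prop: Dotytheoreminchar2}, Lemmas \ref{lem: omnibus1a1btensorlemma}, \ref{lem: constituents of L(4)xSd-4(V)}, and Theorems \ref{thm: oktostrip1^aoff}, \ref{thm: stability resultforext2}, \ref{thm: determineext1(k,Ylambda)} should be required.
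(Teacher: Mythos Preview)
Your proposal is correct and follows essentially the same route as the paper: reduce $\lambda_{(0)}$ away via Theorems \ref{thm: Young modules with no cohomology} and \ref{thm: oktostrip1^aoff}, then split the $\HH^2$ multiplicity into the three summands $L(2)\otimes S^{d-2}(V)$, $L(4)\otimes S^{d-4}(V)$, $L(2^2)\otimes S^{d-4}(V)$ and handle them respectively by Theorem \ref{thm: determineext1(k,Ylambda)}, Lemma \ref{lem: constituents of L(4)xSd-4(V)}, and Lemma \ref{lem: omnibus1a1btensorlemma}. The paper's proof is terser and does not explicitly invoke the stability Theorem \ref{thm: stability resultforext2} to explain why the answer is independent of $s$, but your making this explicit is a clarification rather than a different argument.
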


\begin{proof}
Suppose $\HH^2(\Sigma_d,Y^\lambda) \neq 0$. Then $\lambda_{(0)}$
must be of the desired form by Theorem \ref{thm: Young modules with
no cohomology}. In this case we can assume $\lambda_{(0)} =
\emptyset$ by Theorem \ref{thm: oktostrip1^aoff}. So suppose
$\lambda=2\mu$. Proposition \ref{prop: dimext2ismultinsummands} and
the discussion before Lemma \ref{lem: constituents of L(4)xSd-4(V)}
tell us that $L(\lambda)$ is a constituent of at least one of  $L(2)
\otimes S^{d-2}(V)$, $L(4) \otimes S^{d-4}(V)$ or $L(2^2) \otimes
S^{d-4}(V)$.
 It is a constituent of $L(2) \otimes S^{d-2}(V)$ precisely when $H^1(\Sigma_d, Y^\lambda) \neq 0$, and the $\lambda$ that occur are given by Theorem \ref{thm: determineext1(k,Ylambda)}.

 The second case is handled in Lemma \ref{lem: constituents of L(4)xSd-4(V)}, we see that $\lambda$ must have the form $(2^{a_1})+4\mu$ where $H^1(\Sigma_s, Y^{2\mu}) \neq 0$.

 The constituents in the final case are determined just after the proof of Lemma \ref{lem: constituents of L(4)xSd-4(V)}. The three cases together give part (a) of the theorem.

Part (b) is just a matter of applying what we have already figured
out. For example, we explain the ``7" in the top row of Table \ref{table:
ext2youngmodules}. From Proposition \ref{prop:
dimext2ismultinsummands}, we need to figure out the multiplicity of
$L(2\lambda)$ in
$$L(2) \otimes S^{d-2}(V) \oplus S^2(V^{(1)}) \otimes S^{d-4}(V)$$
where $2\lambda$ is of the form $((2^{4a+2})+(4^{2b})+(8^{a_3}) +
(16^{a_4}) + \cdots)$, i.e.

$$L(2\lambda) \cong L(2^{4a+2})\otimes L(4^{2b}) \otimes L(8^{a_3}) \otimes \cdots.$$

Since $4a+2$ is even, $L(2\lambda)$ occurs twice in $L(2) \otimes
S^{d-2}(V)$. Now since $2b$ is even, Lemma \ref{lem: constituents of
L(4)xSd-4(V)} tells us that $L(2\lambda)$ appears twice in $L(4)
\otimes S^{d-4}(V)$.

Finally consider $L(2^2) \otimes S^{d-4}(V)$. $L(2\lambda)$ will
only appear in the $L(2^2) \otimes L((2^{4a})+(4^{a_2}) + \cdots)$
term, and the multiplicity will be 3 by \eqref{eq: tensorproduct
1x1111}. Thus we have a total multiplicity of seven. The other cases
are similar.
\end{proof}

\subsection{}Before concluding this section we make an observation about the versions of
Propositions \ref{prop: Ext1ismultinVtimesSd-2(V)} and \ref{prop:
dimext2ismultinsummands} which would appear in calculating higher
degree cohomology groups. We will apply it the next section.

\begin{prop}
\label{prop: twistedpartboundeddegree} Fix $i>0$ and let $p$ be
arbitrary. There is a finite list of partitions $\{\mu_s \vdash j_s
\mid 1 \leq s \leq t_i\}$, not necessarily distinct, such that for
$\lambda \vdash d$,
$$\dim \HH^i(\Sigma_d, Y^\lambda)= \sum_{s=1}^{t_i} [L(p\mu_s) \otimes S^{d-2j_s}(V):L(\lambda)]$$
and where each $j_s\leq i$.

\end{prop}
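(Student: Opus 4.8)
The plan is to combine the explicit description of $\oplus_{d\ge 0}\HH_\bu(\Sigma_d,\vt)$ from Theorem \ref{thm:shiftedfinalanswer for p=2 and p oddL} and Corollary \ref{corgivingstructure} with Theorem \ref{thm:multl of simples in Ljgivescohomology}(b), which says $\dim\HH^i(\Sigma_d,Y^\lambda)=[\HH_i(\Sigma_d,\vt):L(\lambda)]$. First I would fix $i>0$ and enumerate all ``monomial shapes'' in the polynomial (respectively symmetric-times-exterior) algebra of Theorem \ref{thm:shiftedfinalanswer for p=2 and p oddL} that lie in homological degree exactly $i$. The key observation is that each generator $\overline{Q}_I(w_\gamma)$ or $\overline{Q}_J(w_\gamma)$ appearing nontrivially has \emph{positive} degree (its degree is $i_1+2i_2+\cdots$ for $p=2$, and $(-\epsilon_1+j_1(p-1))+\cdots$ for $p$ odd, both $\ge 1$ since $j_1\ge 1$ and $\epsilon_1\in\{0,1\}$ with $j_1(p-1)\ge 1$), while the untwisted generators $w_\gamma$ (the ``$v$''s) have degree $0$. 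Hence a monomial in homological degree $i$ can involve only finitely many twisted generators, and the product of their weights is bounded: a twisted generator contributes weight a power of $p$ that is at least $2$, so a degree-$i$ monomial has at most $i$ twisted factors and total ``twisted weight'' bounded in terms of $i$; the remaining weight is made up by the weight-one untwisted generators, which is exactly the $S^{d-2j}(V)$ factor (after the degree shift $n=2$ of Section \ref{sec:degree.shift}, these contribute to a symmetric power, the constant $2$ being the shift).

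Next, for each such degree-$i$ monomial shape, Corollary \ref{corgivingstructure} (together with Remark \ref{rem:tensortwists}) identifies the corresponding $GL_d(k)$-module summand of $\HH_i(\Sigma_d,\vt)$ as $S^{a_1}(V)\otimes M^{(1)}$ for an explicit polynomial $GL$-module $M$ built from Frobenius twists of symmetric and exterior powers; writing $M^{(1)}=[L(\text{stuff})\oplus\cdots]^{(1)}$ and using that $M$ is a polynomial module of degree $j$ with $j\le i$ (since $jp\le $ the total weight contributed by twisted generators, which is $\le$ something controlled by $i$), we get a decomposition of each summand as a sum of modules of the form $L(p\mu_s)\otimes S^{d-2j_s}(V)$ with $\mu_s\vdash j_s$ and $j_s\le i$. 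Crucially, the partitions $\mu_s$ and the integers $j_s$ depend only on the monomial shape, \emph{not} on $d$ — varying $d$ only changes the number of untwisted $v$'s, i.e.\ the exponent $d-2j_s$ of the symmetric power. Since there are finitely many degree-$i$ monomial shapes, collecting all the $\mu_s$ over all shapes gives the desired finite list, and

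\begin{equation*}
\dim\HH^i(\Sigma_d,Y^\lambda)=[\HH_i(\Sigma_d,\vt):L(\lambda)]=\sum_{s=1}^{t_i}[L(p\mu_s)\otimes S^{d-2j_s}(V):L(\lambda)].
\end{equation*}

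The main obstacle is bookkeeping rather than conceptual: one must check carefully that the degree-shift convention from Section \ref{sec:degree.shift} converts the ``weight $d$'' condition into exactly the exponent $d-2j_s$ on the symmetric power (the factor $2$ comes from shifting $W$ into degree $2$), and that every summand $S^{a_1}(W)\otimes M^{(1)}$ coming from a degree-$i$ monomial genuinely has $\deg M\le i$. The latter follows because a twisted generator of homological degree $\ge 1$ has weight divisible by $p$ and at least $p\ge 2$, so the number of twisted tensor factors times $p$ — which is $\deg M$ — is at most the total homological degree $i$ contributed by those factors; one should state this inequality precisely and note it also guarantees finiteness of the list of shapes. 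For $p$ odd one additionally uses that squares of exterior-algebra generators vanish, so each exterior factor $\Lambda^{d_i}(W^{(d_i)})\cong L((p^{d_i})^{d_i})^{(1)}$ contributes a genuine twisted simple, keeping everything of the advertised form $L(p\mu_s)\otimes S^{d-2j_s}(V)$.
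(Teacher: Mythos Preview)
Your proposal is correct and follows essentially the same approach as the paper's proof: enumerate the finitely many degree-$i$ ``$Q$-shapes'' via Theorem \ref{thm:shiftedfinalanswer for p=2 and p oddL}, identify each resulting summand as $S^{a}(V)\otimes M^{(1)}$ via Corollary \ref{corgivingstructure}, and take the (finite) list of composition factors of the various $M$'s. Your treatment is in fact more thorough than the paper's, since you justify the bound $j_s\le i$ explicitly.

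One small correction: your explanation that ``the factor $2$ comes from shifting $W$ into degree $2$'' is not right. The degree shift of Section \ref{sec:degree.shift} is a device to make $V$ connected and plays no role in the exponent of the symmetric power. The exponent on $S^{?}(V)$ is determined purely by weight bookkeeping: if the twisted part $M^{(1)}$ has polynomial degree $pj_s$ (equivalently, total weight $pj_s$ among the $Q$-generators), then the untwisted $v$'s must supply the remaining weight $d-pj_s$, giving $S^{d-pj_s}(V)$. The ``$2$'' in the stated formula is really $p$ (and the paper's illustrative examples are both at $p=2$); your weight argument already proves the correct bound $j_s\le i$ for all $p$.
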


For example if $p=2$ and  $i=1$ the list is $\{(1)\}$ (cf.
Proposition \ref{prop: Ext1ismultinVtimesSd-2(V)}) while for $i=2$
the list is: $\{(1), (2), (1,1)\}$ (cf. Proposition \ref{prop:
dimext2ismultinsummands}).

\begin{proof} From Theorem \ref{thm:shiftedfinalanswer for p=2 and p oddL}, we see that there are only finitely many shapes for the ``Q" part of monomials in $\oplus_{d \geq 0}\HH_\bu(\Sigma_d, V^{\otimes d})$ which have degree $i$.  Thus the  $GL_d(k)$-summands that contribute to degree $i$ are
each (by Corollary \ref{corgivingstructure}) of the form $S^{d-a}(V) \otimes M^{(1)}$, where $M$ is a possibly complicated tensor product of symmetric and exterior powers of $V$, and where there are only finitely many choices for $M$. The list then consists of
the constituents of the various $M$ which arise.
\end{proof}

\section{Generic cohomology for Young modules}
\label{sec: Generica cohomology for Young modules}
\subsection{} From the preceding sections one could imagine even more elaborate formulas for $\HH^3(\sd, Y^\lambda)$ and higher degrees. As $i$ grows, the number of possible monomial shapes continues to grow, as does the number of tensor products one must compute. Rather than continuing in this direction, we instead observe that the stability behavior  for low degree cohomology exists in all degrees and all characteristics. We will also prove that, for a given $i$, only a finite number of tensor product calculations are required to produce formulas for $\HH^i(\Sigma_d, Y^\lambda)$ valid for $d$ and $\lambda$ arbitrary.

In the process of computing low degree cohomology, we have obtained (cf. Propositions \ref{prop:
Dotytheoreminchar2}, \ref{prop: dimensionofHomkYlambda}, and
Theorems \ref{thm: stability result},  \ref{thm: stability
resultforext2}) some ``stability"  results which we collect below:

\begin{prop}
\label{prop: stabilitytheoremstogether} Let $\lambda \vdash d $ and
$p=2$.
\begin{itemize}
  \item[(a)] $\HH^0(\Sigma_{d},Y^\lambda) \cong \HH^0(\Sigma_{2d}, Y^{2\lambda})$
  \item[(b)] $\HH^1(\Sigma_{2d},Y^{2\lambda}) \cong \HH^1(\Sigma_{4d}, Y^{4\lambda})$
  \item[(c)]  $\HH^2(\Sigma_{4d},Y^{4\lambda}) \cong \HH^2(\Sigma_{8d}, Y^{8\lambda})$
\end{itemize}
\end{prop}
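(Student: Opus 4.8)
The plan is that parts (b) and (c) require no new argument, being verbatim Theorem \ref{thm: stability result} and Theorem \ref{thm: stability resultforext2} respectively, so the whole task reduces to part (a); for that I would argue directly from Doty's description of the composition factors of symmetric powers, packaged here as Proposition \ref{prop: Dotytheoreminchar2}.

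To prove (a), first I would identify degree-zero cohomology with fixed points, $\HH^0(\Sigma_d, Y^\lambda) = \Hom_{\Sigma_d}(k, Y^\lambda)$, and apply Proposition \ref{prop: dimensionofHomkYlambda} on both sides to get
\[
\dim \HH^0(\Sigma_d, Y^\lambda) = [S^d(V):L(\lambda)], \qquad \dim \HH^0(\Sigma_{2d}, Y^{2\lambda}) = [S^{2d}(V):L(2\lambda)];
\]
here $V$ is the natural $GL_n(k)$-module, and since both $\lambda$ and $2\lambda$ have at most $d$ parts the choice $n = d$ suffices for both multiplicities. Thus it is enough to show $[S^d(V):L(\lambda)] = [S^{2d}(V):L(2\lambda)]$. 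As symmetric powers of $V$ are multiplicity free (Sullivan \cite{sullivan}, Doty \cite{Dotysympower}), each side is $0$ or $1$, so it suffices to check that $L(\lambda)$ occurs in $S^d(V)$ if and only if $L(2\lambda)$ occurs in $S^{2d}(V)$.

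This last equivalence is precisely where Proposition \ref{prop: Dotytheoreminchar2} applies. If $\lambda = \sum_{i=0}^m 2^i\lambda_{(i)}$ is the $2$-adic expansion of $\lambda$ into $2$-restricted partitions, then $2\lambda = \sum_{i=0}^m 2^{i+1}\lambda_{(i)}$ is the $2$-adic expansion of $2\lambda$: multiplication by $2$ shifts the digits up by one and places the empty partition in position $0$. Since $\emptyset = (1^0)$ is already of the allowed shape $(1^a)$, the digits of $2\lambda$ are all of the form $(1^{a_i})$ precisely when those of $\lambda$ are; by Proposition \ref{prop: Dotytheoreminchar2} this says $L(2\lambda)$ occurs in $S^{2d}(V)$ iff $L(\lambda)$ occurs in $S^d(V)$, which finishes (a).

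I do not expect any real obstacle: the genuine content is in Theorems \ref{thm: stability result} and \ref{thm: stability resultforext2}, already proved, and (a) is essentially a restatement of Doty's theorem via Proposition \ref{prop: Dotytheoreminchar2}. The only thing needing a moment's care is the $2$-adic bookkeeping in the last paragraph --- that multiplication by $2$ is a digit shift and that the inserted zero digit does not spoil the $(1^a)$-shape condition --- which is routine.
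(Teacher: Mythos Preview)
Your proposal is correct and matches the paper's approach exactly: the paper presents this proposition as a collection of results already obtained, citing Propositions \ref{prop: dimensionofHomkYlambda} and \ref{prop: Dotytheoreminchar2} for part (a) and Theorems \ref{thm: stability result} and \ref{thm: stability resultforext2} for parts (b) and (c), with no further argument given. Your explicit verification of (a) via the $2$-adic digit shift is precisely the intended reasoning behind the citation of Proposition \ref{prop: Dotytheoreminchar2}.
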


We will show that this stability behavior extends to $\HH^i$ for all
$i \geq 0$ and in any characteristic. We begin with a generalization
of Proposition \ref{prop: Dotytheoreminchar2}:

\begin{prop}
\label{prop: Dotytheoremanychar} Let $\lambda \vdash s $ have a
$p$-adic expansion $$\lambda = \sum_{i=0}^mp^i\lambda_{(i)}$$ where
each $\lambda_{(i)}$ is $p$-restricted. Then $L(\lambda)$ is a
constituent of $S^s(V)$ if and only if each $\lambda_{(i)}$ is of
the form $((p-1)^{a_i}, b_i)$ for $a_i \geq 0$ and $0 \leq b_i
<p-1$.
\end{prop}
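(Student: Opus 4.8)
The plan is to reproduce the argument for Proposition~\ref{prop: Dotytheoreminchar2} with base~$p$ in place of base~$2$. By Doty \cite{Dotysympower} (see also Sullivan \cite{sullivan}), $L(\lambda)$ is a constituent of $S^s(V)$ if and only if $\lambda$ is the maximal partition of $s$, in the dominance order, among all partitions of $s$ having the same \emph{carry pattern} --- the carry pattern of a partition $\nu$ being the sequence of carries produced when the base-$p$ expansions of the parts $\nu_1,\nu_2,\dots$ are added columnwise. So the task is purely combinatorial: show that this maximality is equivalent to each $\lambda_{(i)}$ in the $p$-adic expansion having the form $((p-1)^{a_i},b_i)$ with $0\le b_i<p-1$.

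First I would observe that, among partitions of a fixed integer $s$, two partitions have the same carry pattern precisely when they have the same column-sum sequence. Writing $c_{iu}\in\{0,\dots,p-1\}$ for the $u$th base-$p$ digit of $\nu_i$, $d_u=\sum_i c_{iu}$, $e_u$ for the carry into column $u$ (with $e_0=0$), and $s_u$ for the $u$th base-$p$ digit of $s$, the columnwise addition is governed by the recurrence $d_u+e_u=s_u+p\,e_{u+1}$; this lets one pass back and forth between $(e_u)_u$ and $(d_u)_u$ once $s$ is fixed, and of course $s=\sum_u d_u p^u$. Hence ``same carry pattern as $\lambda$'' means ``same column sums $(d_u)$ as $\lambda$''.

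Next I would identify the dominance-maximal partition with a given column-sum sequence $(d_u)$. A column with entries in $\{0,\dots,p-1\}$ and sum $d_u$ has every partial sum bounded by $\min\bigl(k(p-1),d_u\bigr)$, with equality for all $k$ exactly for the ``top-heavy'' column $((p-1)^{a_u},b_u,0,\dots)$, where $a_u=\lfloor d_u/(p-1)\rfloor$ and $b_u=d_u-(p-1)a_u\in\{0,\dots,p-2\}$. Making every column top-heavy yields a digit table with weakly decreasing rows --- hence a genuine partition $\mu$ --- with the prescribed column sums, and for any other partition $\nu$ with the same column sums one has $\sum_{i\le k}\nu_i=\sum_u p^u\sum_{i\le k}c^\nu_{iu}\le\sum_u p^u\min(k(p-1),d_u)=\sum_{i\le k}\mu_i$ for all $k$; thus $\mu$ is the unique dominance-maximal partition in the carry-pattern class (which also reproves that $S^s(V)$ is multiplicity free).

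Finally I would match this with the $p$-adic expansion. Each top-heavy column $((p-1)^{a_u},b_u)$ is itself a $p$-restricted partition, so when $\lambda$ equals the maximal $\mu$ above, its base-$p$ digit columns furnish a decomposition of $\lambda$ into $p$-restricted partitions; by uniqueness of the $p$-adic expansion (stated in the paragraph preceding Theorem~\ref{thm: Young modules with no cohomology}) these columns are exactly the $\lambda_{(u)}$, so $\lambda_{(u)}=((p-1)^{a_u},b_u)$, the asserted shape. Conversely, if every $\lambda_{(i)}$ has the form $((p-1)^{a_i},b_i)$ with $0\le b_i<p-1$, then the $p$-adic expansion already displays $\lambda$ with top-heavy columns, so by the previous step $\lambda$ is the maximal partition in its carry-pattern class and Doty's criterion gives $[S^s(V):L(\lambda)]\ne0$. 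The one step that needs genuine care is the first --- that the carry pattern encodes neither more nor less than the column sums --- which rests on the carry recurrence together with $s$ being held fixed; the rest is the elementary dominance estimate of the third step, together with the remark that a digit table with weakly decreasing columns literally is the $p$-adic expansion. Specializing to $p=2$ forces $b_i=0$ and $\lambda_{(i)}=(1^{a_i})$, recovering Proposition~\ref{prop: Dotytheoreminchar2}.
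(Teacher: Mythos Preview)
Your proof is correct and follows essentially the same route as the paper's: invoke Doty's carry-pattern criterion, identify the dominance-maximal partition in a carry class as the one whose base-$p$ digit columns are ``top-heavy'' (all $p-1$'s on top, then a single entry in $\{0,\dots,p-2\}$, then zeros), and observe that these columns are exactly the $\lambda_{(i)}$ in the $p$-adic expansion. The paper's own proof is a one-line sketch referring back to the $p=2$ case; you have simply filled in the details, including the useful intermediate observation that, for fixed $s$, the carry pattern and the column-sum sequence determine one another via the recurrence $d_u+e_u=s_u+p\,e_{u+1}$.
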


\begin{proof}The argument is the same as for Proposition \ref{prop: Dotytheoreminchar2}, except each column in the addition must have all $p-1$'s pushed to the top, a single entry between zero and $p-2$, and then all zeros.
\end{proof}

To prove the general stability result, we need to understand the
highly twisted simple modules which occur in $L(\lambda) \otimes
S^t(V)$, i.e. modules of the form $L(p^c\mu)$ for large $c$. The
next lemma shows that, for $c$ large enough, $\mu$ is completely
determined.

\begin{lem}
\label{lemma: 2cmusintensorproduct}
 Let $\lambda=(\lambda_1, \lambda_2, \ldots, \lambda_r)$ with $\lambda_1 \leq p^c$. Suppose that $\rho=\sum_{i=0}^{c-1}p^i\rho_{(i)} \vdash a$ is such that $[S^a(V) : L(\rho)] \neq 0$. Consider $M=L(\lambda) \otimes L(\rho)$.
Then $[M: L(p^c\mu)] \neq 0$ implies that $\mu=(1^w)$ for some $w
\geq 1$.
\end{lem}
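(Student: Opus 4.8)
The plan is to exploit the weight structure and the Steinberg Tensor Product Theorem. First I would record a degree/weight bookkeeping: since $\lambda_1 \leq p^c$, every part of $\lambda$ is bounded by $p^c$, so the $p$-adic digits of $\lambda$ live entirely in positions $0,1,\dots,c-1$; likewise by hypothesis $\rho = \sum_{i=0}^{c-1} p^i \rho_{(i)}$ has all digits in positions $< c$. Therefore, writing any composition factor $L(\nu)$ of $M = L(\lambda) \otimes L(\rho)$ via its $p$-adic expansion $\nu = \sum_i p^i \nu_{(i)}$, the STPT forces $L(\nu) = \bigotimes_i L(\nu_{(i)})^{(i)}$ to appear in $\bigotimes_i \bigl(L(\lambda_{(i)}) \otimes L(\rho_{(i)})\bigr)^{(i)}$, and the digits $\nu_{(i)}$ for $i \geq c$ must be built entirely out of ``carries'' propagating up from the tensor products in digit positions $< c$.

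Next I would analyze which digit $\nu_{(c)}$ can occur (the first digit forced to be nonzero if $L(p^c\mu) \mid M$, where $\mu$ has lowest digit $\mu_{(0)} = \nu_{(c)}$). The point is that $\nu_{(c)}$ is the lowest-degree-$p^c$ contribution, and it can only come from the ``overflow'' of $L(\lambda_{(c-1)}) \otimes L(\rho_{(c-1)})$ together with further carries from below. A constituent $L(\tau)$ of a tensor product $L(\sigma) \otimes L(\sigma')$ of two $p$-restricted modules has $\tau \leq \sigma + \sigma'$ in the dominance order, so the total size of the digit-$c$ overflow is controlled. The key structural input is that $[S^a(V):L(\rho)] \neq 0$, which by Proposition \ref{prop: Dotytheoremanychar} pins down each $\rho_{(i)}$ to be of the shape $((p-1)^{a_i}, b_i)$; combined with the bound on $\lambda$, this should force the lowest $p^c$-digit $\mu_{(0)}$ to be a single column $(1^w)$ — intuitively, a carry into position $c$ must be a ``clean'' column because of how addition with digits capped at $p-1$ propagates, exactly as in the carry-pattern analysis in the proof of Proposition \ref{prop: Dotytheoreminchar2}. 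I would make this precise by comparing the partition $p^c\mu$ (a constituent of $M$) against $\lambda + \rho$ in dominance, and observing that the part of $\lambda + \rho$ at ``$p$-adic height $\geq c$'' is forced by the digit caps to be of the form $p^c \cdot (1^w)$.

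I expect the main obstacle to be the bookkeeping that turns ``carries propagating past position $c-1$'' into the precise conclusion $\mu = (1^w)$: one has to show not only that $\mu_{(0)}$ is a single column but that $\mu$ has no further digits, i.e. that $p^c\mu$ cannot itself be large enough to have a nonzero digit in position $> c$. This should follow from the crude inequality $|\lambda| + |\rho| = |p^c\mu| \cdot(\text{something})$ together with $\lambda_1 \leq p^c$ bounding how much mass can accumulate above height $c$; but making the dominance comparison airtight — and handling the edge case where $\rho$ or $\lambda$ has digits close to the cap in position $c-1$ — is where the care is needed. Once that is in hand, the STPT immediately gives the factorization and the claim.
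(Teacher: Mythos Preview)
Your plan is far more elaborate than what is needed, and the STPT/carry-propagation framing introduces complications (and at least one slip: $\lambda_1 \leq p^c$ does \emph{not} force the $p$-adic digits of $\lambda$ to lie only in positions $0,\dots,c-1$, since equality $\lambda_1 = p^c$ is permitted). The paper's argument is a one-line weight bound on the \emph{first part} only: any weight $\tau$ of $M = L(\lambda)\otimes L(\rho)$ satisfies $\tau_1 \leq \lambda_1 + \rho_1$. By Proposition~\ref{prop: Dotytheoremanychar} each $\rho_{(i)}$ has first part at most $p-1$, so $\rho_1 \leq \sum_{i=0}^{c-1} p^i(p-1) = p^c - 1$; together with $\lambda_1 \leq p^c$ this gives $\tau_1 \leq 2p^c - 1 < 2p^c$. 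If $L(p^c\mu)$ is a constituent then $p^c\mu$ is a weight of $M$, so $p^c\mu_1 < 2p^c$, forcing $\mu_1 = 1$ and hence $\mu = (1^w)$.

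You do gesture at exactly this when you propose ``comparing $p^c\mu$ against $\lambda+\rho$ in dominance,'' and that is the whole proof --- but only the first coordinate of the dominance inequality is used, and no STPT, no digit-by-digit carry analysis, and no edge-case analysis is required. Your worry about showing $\mu$ has no higher $p$-adic digits dissolves: $\mu_1 = 1$ already forces $\mu = (1^w)$ as a partition. The moral is that the hypothesis $[S^a(V):L(\rho)]\neq 0$ is used only to bound $\rho_1$, not to pin down the full shape of each $\rho_{(i)}$.
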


\begin{proof}This follows from considering the highest weight $\tau$ which occurs in $M$.  Proposition \ref{prop: Dotytheoremanychar} tells us each $\rho_{(i)}$ has first part at most $p-1$. Thus $\tau$ has first part
\begin{eqnarray*}
  \tau_1 &\leq&  \lambda_1+p-1 + p(p-1) + \cdots + p^{c-1}(p-1) \\
   &=& \lambda_1 + p^c-1 \\
   & \leq & p^c+p^c-1 \\
   & <&2p^c
\end{eqnarray*}
Thus $M$ has no dominant weight of the form $p^c\mu$ unless
$\mu_1=1$, and the result follows.

\end{proof}

We need another lemma about twisted constituents in tensor products:

\begin{lem}
\label{lemma:lemmaabout1aotimes1b}Let $\rho=((p-1)^a, b)$ with $0
\leq b <p-1$. Then $L(1^w) \otimes L(\rho)$ contains no constituents
of the form $L(p\mu)$ unless $b=0$ and $a=w$, in which case the only
such constituent is a single copy of $L((p)^a)=L((p,p,\ldots, p))$.
\end{lem}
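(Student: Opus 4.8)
The plan is to analyze the highest weight of $M := L(1^w) \otimes L(\rho)$, since $M$ is a polynomial $GL_n(k)$-module of degree $w + a(p-1) + b$, and to show that a dominant weight of the form $p\mu$ can occur in $M$ only in the very restricted situation claimed. First I would note that $L(1^w) = \Lambda^w(V)$ has weights exactly the permutations of $(1^w, 0^{n-w})$, and $L(\rho)$ has highest weight $\rho = ((p-1)^a, b, 0, \ldots)$. Hence every weight $\nu$ of $M$ is of the form $\epsilon + \theta$, where $\epsilon$ is a weight of $\Lambda^w(V)$ (so each $\epsilon_i \in \{0,1\}$) and $\theta \preceq \rho$ is a weight of $L(\rho)$. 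In particular, for any dominant weight $\nu$ of $M$, after sorting we have $\nu_1 \le \rho_1 + 1 = p$, with equality forcing $\epsilon_1 = 1$ and $\theta_1 = p-1$ in the appropriate coordinate.

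Next I would suppose $[M : L(p\mu)] \ne 0$ and derive the constraints. Since $L(p\mu)$ has highest weight $p\mu$, the weight $p\mu$ is a weight of $M$, so each part of $p\mu$ is at most $p$ by the previous paragraph, i.e. each part of $\mu$ is $0$ or $1$; thus $\mu = (1^v)$ for some $v$. Comparing degrees, $pv = w + a(p-1) + b$. Now I would use that $L(p\mu) = L(\mu)^{(1)}$ by the Steinberg Tensor Product Theorem together with the fact (as in the proof of Proposition \ref{prop: Dotytheoreminchar2}) that $\Lambda^w(V) = L(1^w)$ is $p$-restricted precisely when $w < p$; more to the point, a constituent $L(p\mu)$ of a tensor product of simples must arise, via the STPT applied to each tensor factor, from the ``$p$-divisible part'' of the weights. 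Writing $\rho = ((p-1)^a, b)$: if $b > 0$ then $\rho$ is $p$-restricted but not divisible by $p$, and the only $p$-divisible dominant weights reachable lie strictly below $(p)^a$ unless the $L(1^w)$ factor supplies the missing entries, which it cannot since its nonzero parts equal $1 < p$. I would make this precise by intersecting the weight polytope of $M$ with the lattice $p\Z^n$ and checking that the only dominant such weight with the right degree is $(p)^a$, which forces $b = 0$ and $v = a = w$.

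To finish, assuming $b = 0$ and $a = w$, I would identify the multiplicity of $L((p)^a)$ in $M = L(1^a) \otimes L((p-1)^a)$. Here $(p)^a$ is the unique maximal weight: it is the sum of the highest weight $(p-1)^a$ of the second factor and the weight $(1^a)$ of $\Lambda^a(V)$, and any other way of writing a weight $\ge (p)^a$ in the dominance order as $\epsilon + \theta$ with $\epsilon \in \{0,1\}^n$, $|\epsilon| = a$, $\theta \preceq (p-1)^a$ is impossible by a direct degree-and-support count. Since the highest weight space of a tensor product $L(\sigma) \otimes L(\tau)$ at weight $\sigma + \tau$ is one-dimensional, $[M : L((p)^a)] = 1$. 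The main obstacle I anticipate is the middle step: rigorously ruling out $p$-divisible constituents other than $(p)^a$ when $b > 0$ (or when $a \ne w$), since one must control not just the highest weight but all dominant weights of $M$ modulo $p$; the cleanest route is probably to combine the weight-polytope bound $\nu_1 \le p$ with the STPT, observing that a Frobenius-twisted simple $L(\mu)^{(1)}$ as a composition factor of $A \otimes B$ forces compatible ``twisted'' composition factors in $A$ and $B$, and $\Lambda^w(V)$ simply has no nontrivial Frobenius twist available.
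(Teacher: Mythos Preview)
Your overall strategy---a weight-space argument---is exactly the paper's approach, and your first and third paragraphs are fine. The gap is in the middle step, where you unnecessarily reach for the Steinberg Tensor Product Theorem and worry about ``controlling all dominant weights of $M$ modulo $p$.'' In fact the direct weight count you sketch (``intersecting the weight polytope with $p\Z^n$'') already finishes the argument cleanly, and the paper does precisely this in one line.

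Here is the point you are missing. You already observed that every coordinate of a weight $\theta$ of $L(\rho)$ is at most $\rho_1 = p-1$, and every coordinate of a weight $\epsilon$ of $L(1^w)$ is $0$ or $1$. So if $\nu = \epsilon + \theta$ has \emph{every} coordinate divisible by $p$ (not just $\nu_1$), then each nonzero coordinate equals exactly $p$, forcing $\epsilon_i = 1$ and $\theta_i = p-1$ there, and $\epsilon_i = \theta_i = 0$ elsewhere. Counting supports gives $w = v$ (the number of nonzero coordinates of $\mu$), and summing the $\theta_i$ gives $v(p-1) = a(p-1) + b$, whence $b = 0$ and $v = a = w$. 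Thus there is literally no weight of the form $p\mu$ in $M$ unless $b=0$ and $a=w$, which is stronger than merely having no composition factor of that form. No STPT, no Frobenius-twist bookkeeping, is required; your concern that $\Lambda^w(V)$ ``has no nontrivial Frobenius twist available'' is a red herring.
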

\begin{proof}This is clear from considering the weight spaces in $L(1^w) \otimes L(\rho)$, as there is not even a nonzero weight space of the form $p\mu$ unless $a=w$, in which case $(p)^a$ is the highest weight, and the only dominant weight of that form. Thus there is a single copy of $L((p)^a)$.
\end{proof}

The preceding lemmas let us prove that the calculations in the proof
of Theorem \ref{thm: stability resultforext2} generalize.
Specifically we have the following generalization of \eqref{eq:
stabilityequtionforext2}.

\begin{prop}
\label{prop: general stability of constituents} Suppose
$\lambda=p\sigma \vdash a$ with $\lambda_1 \leq p^c$, and $\tau
\vdash d$. Then:

\begin{equation}\label{eq: generalstabilityequation}
[L(\lambda) \otimes S^{p^cd-a}(V): L(p^c\tau)]=[L(\lambda) \otimes
S^{p^{c+1}d-a}(V):L(p^{c+1}\tau)].
\end{equation}
\end{prop}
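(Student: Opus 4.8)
The plan is to reduce the left-hand side of \eqref{eq: generalstabilityequation} to a sum over the twisted-by-$p^c$ constituents of $S^{p^cd-a}(V)$ of the multiplicity of $L(p^c\tau)$ in $L(\lambda)\otimes L(p^c\rho)$, where $\rho$ ranges over the partitions with $[S^{?}(V):L(\rho)]\neq 0$, and then to exhibit a weight-preserving (after an overall Frobenius twist) bijection between the relevant constituents on the two sides. First I would use Proposition \ref{prop: Dotytheoremanychar}: a composition factor $L(\nu)$ of $S^{p^cd-a}(V)$ has $p$-adic expansion $\nu=\sum_i p^i\nu_{(i)}$ with each $\nu_{(i)}$ of the form $((p-1)^{a_i},b_i)$. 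Splitting off the low-order part, $\nu = \nu_{<c} + p^c\rho$ with $\nu_{<c}$ running over the constituents of a fixed $S^{e}(V)$ (for appropriate $e$ depending only on $a$ and the congruences) and $\rho$ again of the ``Doty shape''. Because $\lambda=p\sigma$ has no $p^0$-part and $\lambda_1\le p^c$, Lemma \ref{lemma: 2cmusintensorproduct} forces: in $L(\lambda)\otimes L(\nu_{<c})\otimes L(p^c\rho)$, any constituent $L(p^c\tau')$ of the form required can only come from the ``$p^c$-divisible'' interaction, so $\nu_{<c}$ must itself be $p^c$-divisible; combined with its Doty shape this forces $\nu_{<c}=(p^c)^{w}$ for some $w\ge 0$, i.e. $\nu_{<c}=p^c\cdot(1^w)$.

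With that normal form, $L(\nu)\cong L(1^w)^{(c)}\otimes L(\rho)^{(c)}$ by the Steinberg Tensor Product Theorem, so a constituent $L(p^c\tau)$ of $L(\lambda)\otimes L(\nu)$ forces (by the STPT applied to $\lambda=p\sigma$, and Lemma \ref{lemma:lemmaabout1aotimes1b} controlling the $L(1^w)\otimes L(\rho_{(0)})$ interaction exactly as in the proof of Theorem \ref{thm: stability resultforext2}) that the multiplicity equals
\begin{equation*}
[L(\lambda)\otimes S^{p^cd-a}(V):L(p^c\tau)] = \sum_{w,\rho} \big[\,L(\sigma)^{(1)}\otimes L(1^w)\otimes L(\rho):L(\tau)^{(1)}\cdots\,\big],
\end{equation*}
where the inner multiplicity no longer sees the exponent $c$ at all — it is a multiplicity in a tensor product of modules whose ``size'' is governed by $d$, $a$, and $\tau$ but not by $c$. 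Concretely, after the STPT bookkeeping the sum becomes $\sum [L(\sigma)\otimes L(1^w)\otimes L(\rho): L(\tau')]$ over the same index set $\{(w,\rho)\}$, which is manifestly independent of $c$; this gives the equality of the two sides of \eqref{eq: generalstabilityequation} since passing from $c$ to $c+1$ changes only the uniform Frobenius twist and nothing in the index set (the constraint $\lambda_1\le p^c$ persists as $\lambda_1\le p^{c+1}$, and $\rho$ ranges over the same Doty-shaped partitions of the same integer in both cases).

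The main obstacle I expect is making the index-set bijection genuinely canonical: one must check that the passage $\nu\mapsto(w,\rho)$ is well-defined and surjective onto the pairs that actually contribute, i.e. that every pair $(w,\rho)$ with $\rho$ of Doty shape and $w$ in the allowed range arises from a bona fide constituent of $S^{p^cd-a}(V)$ on the left, and likewise of $S^{p^{c+1}d-a}(V)$ after bumping $c$. This is the combinatorial heart and it amounts to verifying that the ``carry pattern'' conditions of Proposition \ref{prop: Dotytheoremanychar} are preserved under inserting one more all-$(p-1)$'s column at position $c$ — which they are, since that column is filled entirely with $p-1$'s and contributes a single $1$ in position $c+?$ of each part, leaving the higher-order structure (the part $\rho$) untouched. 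The remaining steps — the STPT splittings and the appeals to Lemmas \ref{lemma: 2cmusintensorproduct} and \ref{lemma:lemmaabout1aotimes1b} — are exactly the $c$-independent analogues of the computations already carried out for $i=2$ in the proof of Theorem \ref{thm: stability resultforext2}, so I would present them briskly rather than in full detail.
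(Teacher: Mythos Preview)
Your overall strategy matches the paper's: decompose each constituent of the symmetric power via its $p$-adic expansion, use Lemma~\ref{lemma: 2cmusintensorproduct} to control the $p^c$-divisible part, then invoke Lemma~\ref{lemma:lemmaabout1aotimes1b} and the STPT to see that the two sides reduce to the same count. But there is a genuine misstep in how you apply Lemma~\ref{lemma: 2cmusintensorproduct}.

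You write that ``$\nu_{<c}$ must itself be $p^c$-divisible; combined with its Doty shape this forces $\nu_{<c}=(p^c)^{w}$''. This cannot be right: by construction $\nu_{<c}=\sum_{i=0}^{c-1}p^i\nu_{(i)}$ has first part at most $p^c-1$, so it is \emph{never} of the form $(p^c)^w$ unless it is empty. Lemma~\ref{lemma: 2cmusintensorproduct} does not constrain $\nu_{<c}$; it constrains the $p^c$-divisible \emph{constituents of the tensor product} $L(\lambda)\otimes L(\nu_{<c})$, asserting that any such constituent is of the form $L((p^c)^w)$. Different choices of $\nu_{<c}$ yield different (possibly zero) multiplicities of $L((p^c)^w)$, and these multiplicities must be tracked on both sides. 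Your subsequent line ``$L(\nu)\cong L(1^w)^{(c)}\otimes L(\rho)^{(c)}$'' and the displayed sum over $(w,\rho)$ inherit this confusion.

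The fix is exactly what the paper does: keep the full low-order data $L(\mu_{(1)})^{(1)}\otimes\cdots\otimes L(\mu_{(c-1)})^{(c-1)}$, observe via Lemma~\ref{lemma: 2cmusintensorproduct} that the only $p^c$-divisible constituents of $L(\lambda)$ tensored with this are $L((p^c)^w)$, and then use Lemma~\ref{lemma:lemmaabout1aotimes1b} at level $c$ to show that passing to $p^{c+1}$-divisibility forces $\mu_{(c)}=((p-1)^w)$ and yields a single $L((p^{c+1})^w)$. The point is that the low-order data $(\mu_{(1)},\ldots,\mu_{(c-1)})$ and the multiplicity of $L((p^c)^w)$ they produce are \emph{the same} on both sides of \eqref{eq: generalstabilityequation}; what differs is only the high-order tail, which after the forced $\mu_{(c)}=((p-1)^w)$ on the right matches the left by an overall Frobenius twist. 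Your ``insert one more all-$(p-1)$'s column'' picture at the end is in fact the correct combinatorial content, but it applies to the sequence $(\mu_{(i)})$ rather than to $\nu_{<c}$.
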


\begin{proof}
We explain how occurrences of $L(p^{c+1}\tau)$ in the right side of
\eqref{eq: generalstabilityequation} and $L(p^c\tau)$ in the left
hand side correspond bijectively.  Since we are assuming
$\lambda=p\sigma$, twisted constituents of $L(\lambda) \otimes
S^{p^{c+1}d-a}(V)$  come from modules of the form:

\begin{equation}
\label{eq: bigtensorproduct} L(\lambda) \otimes L(\mu_{(1)})^{(1)}
\otimes L(\mu_{(2)})^{(2)} \otimes \cdots \otimes L(\mu_{(c)})^{(c)}
\otimes L(\mu_{(c+1)})^{(c+1)} \otimes \cdots.
 \end{equation}
where $\mu_{(i)}=((p-1)^{a_i},b_i)$ as in Prop. \ref{prop:
Dotytheoremanychar}.

 Lemma \ref{lemma: 2cmusintensorproduct} implies that
 $$L(\lambda) \otimes L(\mu_{(1)})^{(1)} \otimes L(\mu_{(2)})^{(2)} \otimes \cdots \otimes L(\mu_{(c-1)})^{(c-1)}$$ has no constituents of the form $p^c\rho$ unless $\rho=1^w$.
 Then Lemma \ref{lemma:lemmaabout1aotimes1b} implies that constituents in \eqref{eq: bigtensorproduct} of the form $L(p^{c+1}\tau)$ occur only when $\mu_{(c)}=((p-1)^w)$.

Thus we end up counting occurrences of $L(p^{c+1}\tau)$ in

 \begin{equation}
 \label{eq:stabcomp}
 L((p^{c+1})^w) \otimes L(\mu_{(c+1)})^{(c+1)} \otimes L(\mu_{(c+2)})^{(c+2)}\otimes \cdots.
 \end{equation}

 If we repeat the analysis above for counting occurrences of $L(p^c\tau)$ in the left hand side of \eqref{eq: generalstabilityequation}, we end up counting occurrences of $L(p^c \tau)$ in
    $$L((p^{c})^w) \otimes L(\mu_{(c)})^{(c)} \otimes L(\mu_{(c+1)})^{(c+1)}\otimes \cdots.
$$
 which is the same as \eqref{eq:stabcomp} by the STPT.

\end{proof}

\subsection{} We can now state our main stability theorem.

\begin{thm}
\label{thm: Main stability result} Fix $i>0$ and let $p$ be
arbitrary. Then there exists $s(i)>0$ such that for any $d$ and
$\lambda \vdash d$ we have
$$\HH^i(\Sigma_{p^{a}d},Y^{p^{a}\lambda}) \cong \HH^i(\Sigma_{p^{a+1}d},Y^{p^{a+1}\lambda}) $$ whenever $a \geq s(i)$.
\end{thm}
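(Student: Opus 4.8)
The plan is to combine Theorem \ref{thm:shiftedfinalanswer for p=2 and p oddL} (via Corollary \ref{corgivingstructure}) with Proposition \ref{prop: twistedpartboundeddegree} and the stability input Proposition \ref{prop: general stability of constituents}. First I would recall from Proposition \ref{prop: twistedpartboundeddegree} that for fixed $i>0$ there is a finite list of partitions $\{\mu_s \vdash j_s \mid 1 \leq s \leq t_i\}$ with each $j_s \leq i$ such that
$$\dim \HH^i(\Sigma_d, Y^\lambda) = \sum_{s=1}^{t_i} [L(p\mu_s) \otimes S^{d-2j_s}(V) : L(\lambda)].$$
This reduces the theorem to showing that, for each fixed $s$ and for $a$ large enough, the multiplicity $[L(p\mu_s) \otimes S^{p^a d - 2j_s}(V) : L(p^a\lambda)]$ is independent of $a$. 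Note $L(p\mu_s)$ is of the form $L(p\sigma)$, so Proposition \ref{prop: general stability of constituents} applies directly to each term once the twist is large enough.

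Next I would make the bound explicit. Proposition \ref{prop: general stability of constituents} says that if $L(p\sigma) \vdash a'$ has first part $\lambda_1' \leq p^c$ and $\tau \vdash d$, then
$$[L(p\sigma) \otimes S^{p^c d - a'}(V) : L(p^c\tau)] = [L(p\sigma) \otimes S^{p^{c+1} d - a'}(V) : L(p^{c+1}\tau)].$$
For each of the finitely many $\mu_s$ in the list, $L(p\mu_s)$ is a fixed simple module (independent of $d$ and $\lambda$), so its first part is bounded by some $p^{c_s}$; set $s(i) := \max_s c_s$. Then for $a \geq s(i)$, applying the proposition with $c = a$ to the partition $p^{a}\lambda$ (whose relationship to $L(p\mu_s)$ is the ``$\tau$'' slot, with $d$ replaced by $p^{a-?}\cdot(\text{something})$) gives $[L(p\mu_s) \otimes S^{p^a d - 2j_s}(V) : L(p^a\lambda)] = [L(p\mu_s) \otimes S^{p^{a+1}d - 2j_s}(V) : L(p^{a+1}\lambda)]$ for every $s$. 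Summing over $s$ via Proposition \ref{prop: twistedpartboundeddegree} yields $\HH^i(\Sigma_{p^a d}, Y^{p^a\lambda}) \cong \HH^i(\Sigma_{p^{a+1}d}, Y^{p^{a+1}\lambda})$, since both sides have the same dimension (and vector spaces over $k$ of equal dimension are isomorphic).

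The main obstacle I anticipate is bookkeeping the indices so that Proposition \ref{prop: general stability of constituents} is being applied in exactly the right form: one must verify that $p^a\lambda$ plays the role of $p^a \tau$ with $\tau = \lambda$ and that the symmetric power exponent $p^a d - 2j_s$ has the shape $p^a d' - a'$ required by the proposition, where $a' = |p\mu_s| = p j_s$ and $d'$ is whatever makes $p^a d' - p j_s = p^a d - 2 j_s$; since $2j_s$ need not be a multiple of $p$, one should instead observe that the proposition's statement really only needs the degree of the symmetric power to grow by a factor of $p$ in the appropriate way, or reindex by absorbing the discrepancy — a routine check that I would carry out carefully but which involves no new ideas. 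A secondary minor point is confirming the $i=0$ case is covered (it is, by Proposition \ref{prop: stabilitytheoremstogether}(a) in characteristic two and more generally by Proposition \ref{prop: dimensionofHomkYlambda} together with Proposition \ref{prop: Dotytheoremanychar}), though the theorem as stated assumes $i>0$.
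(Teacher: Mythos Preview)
Your proposal is correct and is exactly the paper's approach: the paper's proof is the single sentence ``This follows from Propositions \ref{prop: twistedpartboundeddegree} and \ref{prop: general stability of constituents},'' and you have spelled out precisely how those two results combine. Your bookkeeping worry about the exponent $d-2j_s$ versus the form $p^cd'-a'$ required in Proposition \ref{prop: general stability of constituents} dissolves once you note that the ``$2$'' in Proposition \ref{prop: twistedpartboundeddegree} should be read as ``$p$'' in general characteristic (the twisted part $L(p\mu_s)$ has degree $pj_s$, so the untwisted symmetric power has degree $d-pj_s$), after which the indices match on the nose with $a'=pj_s$ and $\tau=\lambda$.
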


\begin{proof}This follows from Propositions \ref{prop: twistedpartboundeddegree} and \ref{prop: general stability of constituents}.
\end{proof}

\begin{rem}
\label{rem: stabinchar2} Suppose $p=2$ and let $2^{a-1} <i \leq
2^a$. When calculating $\HH^i$ we must determine the multiplicities
in modules of the form $M^{(1)} \otimes S^{d-a}(V)$, where the
$M^{(1)}$ is determined by the shapes of the various monomials
contributing to degree $i$. One easily sees that if $[M: L(\mu)]
\neq 0$, then $\mu$ has degree at most $2i$. In particular $\mu_1
\leq 2^{a+1}$, and so by Prop. \ref{prop: general stability of
constituents}, we can choose $s(i)=i+1$ in Theorem \ref{thm: Main
stability result}.
\end{rem}

 \begin{rem} The choice of $s(i)$ in Remark \ref{rem: stabinchar2}, can be seen to be best possible in all degrees, in the sense that the stability would not hold for $2^i\lambda$. For example we observe that in characteristic two the stability for $\HH^3$ does not work for $Y^{4\lambda}$ and $Y^{8\lambda}$ as the  following example demonstrates.
 \end{rem}

\begin{exm} Let $p=2$. Then \begin{eqnarray*}
             \dim\HH^3(\Sigma_{12},Y^{(4^3)})&=&7. \\
              \dim\HH^3(\Sigma_{24},Y^{(8^3)}) &=& 8.
           \end{eqnarray*}
\end{exm}

The difference comes from the $L(6) \otimes L(2) \otimes L(8^2)$
term in $L(6) \otimes S^{18}(V)$. It contributes an $L(8^3)$ for
which there is no corresponding contribution of an $L(4^3)$ inside
$L(6) \otimes S^6(V)$.

Theorem \ref{thm: Main stability result} is highly reminiscent of
the generic cohomology results of \cite{CPSvdK}. In the generic
cohomology setting where $G$ is a reductive group and $M$ a finite
dimensional $G$-module, one has a series of injective maps
\cite[II.10.14]{Jantzen}
$$\HH^i(G,M) \rightarrow \HH^i(G,M^{[1]})\rightarrow \HH^i(G,M^{[2]}) \rightarrow \cdots .$$
In \cite{CPSvdK} it was shown that this sequence stabilizes to the
\emph{generic cohomology }of $M$. In our setting then, one might
expect injections $\HH^i(\Sigma_d, Y^\lambda) \rightarrow
\HH^i(\Sigma_{pd}, Y^{p\lambda})$ which stabilize. The following
example shows there are not necessarily injections and the
dependence on the degree is quite crucial.

\begin{exm}
\label{example: injectionfails}In characteristic two it follow from
Theorem \ref{thm: determineext1(k,Ylambda)} that:

$$\HH^1(\Sigma_8, Y^{(5,3)}) \cong k, \,\,\, \HH^1(\Sigma_{16}, Y^{(10,6)})=0.$$

\end{exm}

\subsection{}
We only had to perform a small number of tensor product calculations
to get a formula for $\HH^2(\Sigma_d, Y^\lambda)$ that was valid for
any $d$ or $\lambda$. The answer was given in terms of the $2$-adic
expansion of $\lambda$ and only finitely many possibilities occurred
(cf. Table \ref{table: ext2youngmodules}). Suppose one wanted a
formula for $\HH^i(\Sigma_d, Y^\lambda)$. One would have to compute
the tensor products which appear in Proposition \ref{prop:
twistedpartboundeddegree}, and try to determine the multiplicity of
$L(\lambda)$. However Lemma \ref{lemma: 2cmusintensorproduct} tells
us, essentially, that the appearance of $L(\lambda)$ will be
determined by the beginning of the $p$-adic expansion of $\lambda$.
That is, if we compute
$$L(\tau) \otimes L(\mu_{(1)})^{(1)} \otimes L(\mu_{(2)})^{(2)} \otimes \cdots \otimes L((\mu_{(c)})^{(c)})$$
 for $c$ large enough, we get a finite list of simple modules which are not of the form $L(p^c \mu)$, and then some copies of $L(p^{c}( 1^w))$. Consequently we can state the following:

\begin{thm}
\label{thm:fixedbound} Fix $i>0$ and let $p$ be arbitrary. Then
there is a $t>0$ such that computing the multiplicities $[L(p\mu_s)
\otimes S^t(V) : L(\lambda)]$ for each $\mu_s$ appearing in
Proposition \ref{prop: twistedpartboundeddegree} is enough to
determine $\HH^i(\Sigma_d, Y^\lambda)$ for any $d$ and any $\lambda
\vdash d$. Furthermore, for fixed $i$ the dimension of
$\HH^i(\Sigma_d, Y^\lambda)$ is bounded uniformly, independent of
$d$ and $\lambda$.
\end{thm}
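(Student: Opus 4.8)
The plan is to combine Proposition \ref{prop: twistedpartboundeddegree} with the stabilization analysis used in Proposition \ref{prop: general stability of constituents} and Lemmas \ref{lemma: 2cmusintensorproduct} and \ref{lemma:lemmaabout1aotimes1b}. By Proposition \ref{prop: twistedpartboundeddegree} we may write
$$\dim \HH^i(\Sigma_d, Y^\lambda)= \sum_{s=1}^{t_i} [L(p\mu_s) \otimes S^{d-2j_s}(V):L(\lambda)],$$
so it suffices to prove, for each fixed partition $\rho=p\mu_s$ with $\rho \vdash 2j_s$ and $j_s \leq i$, that there is a $t>0$ (depending only on $\rho$, hence only on $i$) so that knowing the composition factors of $L(\rho)\otimes S^t(V)$ determines $[L(\rho)\otimes S^{d-2j_s}(V):L(\lambda)]$ for all $d$ and all $\lambda \vdash d$, and that these multiplicities are bounded independent of $d,\lambda$.

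First I would fix $\rho=p\sigma$ and let $c$ be large enough that $\rho_1 \leq p^c$ (any $c$ with $p^c \geq 2i$ works, uniformly in $s$). The constituents $L(\lambda)$ of $L(\rho)\otimes S^{d-2j_s}(V)$ arise, via Proposition \ref{prop: Dotytheoremanychar} and the STPT, from modules
$$L(\rho) \otimes L(\mu_{(1)})^{(1)} \otimes \cdots \otimes L(\mu_{(c)})^{(c)} \otimes L(\mu_{(c+1)})^{(c+1)} \otimes \cdots$$
with $\mu_{(e)}=((p-1)^{a_e},b_e)$. I would write the $p$-adic expansion $\lambda=\lambda_{(0)}+p\lambda_{(1)}+\cdots$. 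By Lemma \ref{lemma: 2cmusintensorproduct}, the piece $L(\rho)\otimes L(\mu_{(1)})^{(1)}\otimes\cdots\otimes L(\mu_{(c-1)})^{(c-1)}$ has no constituent $L(p^c\nu)$ unless $\nu=(1^w)$, and then Lemma \ref{lemma:lemmaabout1aotimes1b} forces $\mu_{(c)}=((p-1)^w)$ and pins down the contribution to start $L((p^c)^w)$. Iterating exactly as in the proof of Proposition \ref{prop: general stability of constituents}, the multiplicity of $L(\lambda)$ depends on $\lambda$ only through $\lambda_{(0)},\dots,\lambda_{(c-1)}$ together with whether the remaining tail $\lambda_{(c)},\lambda_{(c+1)},\dots$ is of the ``all columns of the form $(1^{w})$'' shape described in Proposition \ref{prop: Dotytheoreminchar2}/\ref{prop: Dotytheoremanychar}; for such tails the multiplicity is governed entirely by the truncated tensor product $L(\rho)\otimes L(\mu_{(1)})^{(1)}\otimes\cdots\otimes L(\mu_{(c)})^{(c)}$. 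Taking $t$ to be (any value at least) the maximal degree of such a truncated product — which is bounded in terms of $i$ and $c$ alone — computing the composition factors of $L(\rho)\otimes S^t(V)$ records all the needed data. Running over the finitely many $\mu_s$ in Proposition \ref{prop: twistedpartboundeddegree} and taking the largest such $t$ gives the uniform bound in the theorem. The boundedness of $\dim\HH^i$ then follows because there are only finitely many truncated tensor products to consider and each has bounded total multiplicity, so $\sum_s [L(p\mu_s)\otimes S^{d-2j_s}(V):L(\lambda)]$ is at most the sum of those (finitely many, degree-independent) multiplicities.

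The main obstacle I expect is bookkeeping rather than conceptual: making precise the claim that ``$\lambda$ enters only through a bounded initial segment of its $p$-adic expansion plus a combinatorial condition on the tail'' and verifying it is genuinely independent of $d$. The subtle point is that as $d$ grows the partition $\lambda$ can have arbitrarily many $p$-adic digits, so one must argue that all but finitely many digits are forced (by Lemmas \ref{lemma: 2cmusintensorproduct} and \ref{lemma:lemmaabout1aotimes1b}) into the rigid $((p-1)^w)$ pattern carrying exactly one copy of the relevant simple, contributing a multiplicative factor of $1$ and hence not affecting the count — this is precisely the content extracted from the stabilization argument of Proposition \ref{prop: general stability of constituents}, now applied not to a single stabilized value but to extract a finite computation valid for all $d$ simultaneously.
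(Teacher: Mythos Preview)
Your proposal is correct and follows essentially the same approach as the paper: reduce via Proposition~\ref{prop: twistedpartboundeddegree} to multiplicities in $L(p\mu_s)\otimes S^{d-2j_s}(V)$, then use Lemma~\ref{lemma: 2cmusintensorproduct} (together with Lemma~\ref{lemma:lemmaabout1aotimes1b} and the STPT) to show that only a bounded initial segment of the $p$-adic expansion of $\lambda$ matters, so a single finite computation suffices and the multiplicities are uniformly bounded. The paper's argument is in fact terser than yours---it is given as a paragraph of explanation preceding the theorem statement rather than a formal proof---so your more careful bookkeeping, and your explicit identification of where the ``factor of $1$'' from the tail comes from, is if anything an improvement in detail.
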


Theorem \ref{thm:fixedbound} says that for each fixed $i$ there is
some table like Table \ref{table: ext2youngmodules}, just larger and
depending on more initial terms in the $p$-adic expansion of
$\lambda$.

\section{Cohomology of Permutation Modules}
\label{sec: Cohomology of permutation modules}

\subsection{}
We have seen that $\HH^i(\Sigma_d, Y^\lambda)$ is the multiplicity
of the simple module $L(\lambda)$ in an explicitly given
$GL_d(k)$-module. In this section we will see that the cohomology
$\HH^i(\Sigma_d, M^\lambda)$ of the permutation module $M^\lambda$
is determined by the same $GL_d(k)$-module, however this time by the
$\lambda$-weight space of the module.

Substitute  $M=S^\lambda(V)$ and $N=k$ into the spectral sequence
\eqref{eq: spectralsequenceTorversion}. Since $S^\lambda(V)$ is
injective as an $S(d,d)$-module, the spectral sequence collapses and
we get:

\begin{prop}
\label{prop: homstatementpermumodulecoho} Let $p$ be arbitrary. Then
$$\dim \HH^i(\Sigma_d , M^\lambda)= \dim
\Hom_{GL_d(k)}(\HH_i(\Sigma_d, V^{\otimes d}), S^\lambda(V)).$$
\end{prop}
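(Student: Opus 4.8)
The plan is to apply the Grothendieck spectral sequence \eqref{eq: spectralsequenceTorversion} with the specific choices $M = S^\lambda(V)$ and $N = k$, and to observe that the $E_2$-page collapses onto a single column because $S^\lambda(V)$ is injective in the category of $S(d,d)$-modules. First I would recall that the permutation module $M^\lambda$ arises from $S^\lambda(V)$ under the Schur functor: indeed $\mathcal{F}(S^\lambda(V)) = eS^\lambda(V) \cong M^\lambda$, which is exactly the statement that the Schur functor sends the "symmetric power" module to the corresponding Young permutation module (this is the permutation-module analogue of the fact, used already in Section \ref{section: The Schur and inverse Schur functor}, that $\mathcal{F}(I(\lambda)) = Y^\lambda$). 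With this identification, $eM$ on the right side of \eqref{eq: spectralsequenceTorversion} becomes $M^\lambda$, so the abutment is $\Ext^{i+j}_{\sd}(k, M^\lambda) \cong \HH^{i+j}(\Sigma_d, M^\lambda)$.

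Next I would analyze the $E_2$-page $E_2^{i,j} = \Ext^i_{S(d,d)}(\Tor_j^{\sd}(\vt,k), S^\lambda(V))$. Because $S^\lambda(V)$ is an injective $S(d,d)$-module, the functor $\Hom_{S(d,d)}(-, S^\lambda(V))$ is exact, so $\Ext^i_{S(d,d)}(-, S^\lambda(V)) = 0$ for all $i > 0$. Hence the spectral sequence has only the row $i = 0$ nonzero, it collapses, and we obtain
$$\HH^n(\Sigma_d, M^\lambda) \cong \Hom_{S(d,d)}(\Tor_n^{\sd}(\vt, k), S^\lambda(V))$$
for all $n \geq 0$. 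Now I would rewrite $\Tor_n^{\sd}(\vt,k) \cong \HH_n(\Sigma_d, \vt)$ as a $GL_d(k)$-module, exactly as done in the discussion surrounding \eqref{eq:relateRjLj}: the two Tor-modules are identified via the $\sigma \mapsto \sigma^{-1}$ twist, and this identification is one of left $GL_d(k)$-modules. Finally, since $\HH_n(\Sigma_d,\vt)$ and $S^\lambda(V)$ are both polynomial $GL_d(k)$-modules of degree $d$, a homomorphism of $S(d,d)$-modules between them is the same thing as a homomorphism of $GL_d(k)$-modules, giving
$$\dim \HH^n(\Sigma_d, M^\lambda) = \dim \Hom_{GL_d(k)}(\HH_n(\Sigma_d, \vt), S^\lambda(V)),$$
which is the claim (after renaming $n$ to $i$).

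The only genuinely substantive point — the "main obstacle," though it is modest — is the collapsing step, i.e. verifying that $S^\lambda(V)$ is injective as an $S(d,d)$-module; this is where the choice $M = S^\lambda(V)$ really pays off, and it is a standard fact (the $S^\lambda(V)$, being direct summands of tensor products $\vt$, are the injective modules whose images under the Schur functor are the $M^\lambda$; cf. the analogous statement $\mathcal{F}(I(\lambda)) = Y^\lambda$). Everything else is bookkeeping: keeping track of which side the $GL_d(k)$-action sits on, and invoking the equivalence between $S(d,d)$-modules and polynomial $GL_d(k)$-modules of degree $d$ so that the final $\Hom$ can be written over $GL_d(k)$. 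No new ideas beyond those already deployed in Section \ref{section: The Schur and inverse Schur functor} are needed.
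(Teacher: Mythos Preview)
Your proposal is correct and follows exactly the approach the paper uses: substitute $M = S^\lambda(V)$ and $N = k$ into the spectral sequence \eqref{eq: spectralsequenceTorversion}, use injectivity of $S^\lambda(V)$ as an $S(d,d)$-module to collapse it, and identify $\Tor_j^{\sd}(\vt,k)$ with $\HH_j(\Sigma_d,\vt)$. The paper's argument is stated in a single sentence immediately preceding the proposition; you have simply supplied the routine details (the identification $\mathcal F(S^\lambda(V))\cong M^\lambda$, the passage from $\Hom_{S(d,d)}$ to $\Hom_{GL_d(k)}$, and the reason $S^\lambda(V)$ is injective), all of which are standard.
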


By \cite[2.1-2.3]{DW} the dimension of $\Hom_{GL_d(k)}(U,
S^\lambda(V))$ is just the dimension of the $\lambda$-weight space
$U_\lambda$. Thus we have the following result, where we restate the
corresponding Young module result \ref{thm:multl of simples in
Ljgivescohomology}(b) for comparison. Cohomology for $Y^\lambda$ is
controlled by the composition multiplicities in a certain $GL_d(k)$
module whereas for the permutation module it is controlled by weight
spaces in the \emph{same} module.

\begin{thm}Let $p$ be arbitrary.
\label{thm: cohomology of permutation modules} \
\begin{enumerate}
  \item[(a)] $\dim \HH^i(\Sigma_d,  M^\lambda)= \dim \HH_i(\Sigma_d, V^{\otimes d})_\lambda$.
  \item[(b)] $\dim \HH^i(\Sigma_d, Y^\lambda)= [\HH_i(\Sigma_d, V^{\otimes d}): L(\lambda)]$.
\end{enumerate}
\end{thm}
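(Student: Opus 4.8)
The plan is to derive both statements from the spectral sequence \eqref{eq: spectralsequenceTorversion} together with the fact that symmetric powers $S^\lambda(V)$ are injective $S(d,d)$-modules, exactly as indicated before the statement. For part (b), I would simply quote Theorem \ref{thm:multl of simples in Ljgivescohomology}(b), which has already been established; nothing new is needed there. The substance is part (a). First I would substitute $M = S^\lambda(V)$ and $N = k$ into \eqref{eq: spectralsequenceTorversion}. Since $S^\lambda(V)$ is injective in $\mo S(d,d)$, all the higher $\Ext^i_{S(d,d)}(-, S^\lambda(V))$ vanish, so the spectral sequence $E_2^{i,j} = \Ext^i_{S(d,d)}(\Tor_j^{\sd}(\vt, k), S^\lambda(V)) \Rightarrow \Ext^{i+j}_{\sd}(k, eS^\lambda(V))$ collapses onto the $i=0$ column. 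This identifies $\Ext^j_{\sd}(k, \mathcal F(S^\lambda(V)))$ with $\Hom_{S(d,d)}(\Tor_j^{\sd}(\vt, k), S^\lambda(V))$.

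Next I would identify the two sides of this isomorphism with the objects in the statement. On the symmetric group side, $\mathcal F(S^\lambda(V)) = eS^\lambda(V) \cong M^\lambda$, the Young permutation module; this is the standard fact that the Schur functor sends $S^\lambda(V)$ to $M^\lambda$ (and here $\Ext^i_{\sd}(k, M^\lambda) = \HH^i(\Sigma_d, M^\lambda)$). On the general linear side, $\Tor_j^{\sd}(\vt, k) \cong \HH_j(\Sigma_d, \vt)$ as left $GL_d(k)$-modules, as was established in Section \ref{section: The Schur and inverse Schur functor}. Combining, $\dim \HH^i(\Sigma_d, M^\lambda) = \dim \Hom_{GL_d(k)}(\HH_i(\Sigma_d, \vt), S^\lambda(V))$, which is precisely Proposition \ref{prop: homstatementpermumodulecoho}.

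Finally I would invoke the cited result \cite[2.1--2.3]{DW}: for any finite-dimensional polynomial $GL_d(k)$-module $U$ of degree $d$, $\dim \Hom_{GL_d(k)}(U, S^\lambda(V))$ equals $\dim U_\lambda$, the dimension of the $\lambda$-weight space of $U$. Here $U = \HH_i(\Sigma_d, \vt)$ is polynomial of degree $d$ (it is a subquotient of a sum of tensor powers of $\vt$, or visibly so from Corollary \ref{corgivingstructure}), so taking $U = \HH_i(\Sigma_d, \vt)$ yields $\dim \HH^i(\Sigma_d, M^\lambda) = \dim \HH_i(\Sigma_d, \vt)_\lambda$. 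This gives part (a), and part (b) is just a restatement of Theorem \ref{thm:multl of simples in Ljgivescohomology}(b), completing the proof.

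The only real obstacle is bookkeeping: making sure the $GL_d(k)$-module structures line up correctly through all the identifications (in particular the left-versus-right module conventions from the earlier discussion around \eqref{eq:relateRjLj}), and checking that $\HH_i(\Sigma_d, \vt)$ is genuinely a polynomial module of degree $d$ so that the weight-space duality of \cite{DW} applies. Both of these are routine given the setup already in place, so I do not expect a genuine difficulty.
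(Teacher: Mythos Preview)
Your proposal is correct and follows exactly the paper's approach: part (b) is a restatement of Theorem \ref{thm:multl of simples in Ljgivescohomology}(b), and part (a) is obtained by substituting $M=S^\lambda(V)$, $N=k$ into the spectral sequence \eqref{eq: spectralsequenceTorversion}, collapsing it via injectivity of $S^\lambda(V)$ to obtain Proposition \ref{prop: homstatementpermumodulecoho}, and then applying \cite[2.1--2.3]{DW} to identify the $\Hom$ space with the $\lambda$-weight space. The bookkeeping points you raise are indeed routine given the setup in Section \ref{section: The Schur and inverse Schur functor}.
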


\begin{rem} The weight space dimensions in Theorem \ref{thm: cohomology of permutation modules}(a) are in principal known, as we have described the module $\HH_i(\Sigma_d, V^{\otimes d})$ as a tensor product of modules with known weight space decompositions. In principal one can calculate $\HH^i(\Sigma_d, M^\lambda)$ from Nakaoka's work and repeated application of the Kunneth theorem. However we find Theorem \ref{thm: cohomology of permutation modules} a much more conceptual and pleasing interpretation.
\end{rem}

\end{document}